\theoremstyle{plain}
\newtheorem{theorem}{Theorem}[section]
\theoremstyle{plain}
\newtheorem{principle}{Principle}[section]
\theoremstyle{definition}
\newtheorem{definition}[theorem]{Definition}
\theoremstyle{plain}
\newtheorem{lemma}[theorem]{Lemma}
\theoremstyle{plain}
\newtheorem{proposition}[theorem]{Proposition}
\theoremstyle{remark}
\newtheorem{remark}[theorem]{Remark}
\theoremstyle{problem}
\theoremstyle{plain}
\newtheorem{corollary}[theorem]{Corollary}
\theoremstyle{definition}
\newcommand{\ddc}{\mathrm{d}\mathrm{d}^\mathrm{c}}
\newcommand{\cn}{\stackrel{\sim}{\rightarrow}}
\DeclareMathOperator{\ch}{ch}
\DeclareMathOperator{\Td}{Td}
\DeclareMathOperator{\RR}{RR}
\DeclareMathOperator{\CH}{CH}
\DeclareMathOperator{\Pic}{Pic}
\newcommand{\PIC}{\mathcal{P}\mathrm{ic}}
\newcommand{\PICm}{\widehat{\PIC}}
\DeclareMathOperator{\Spec}{Spec}
\DeclareMathOperator{\supp}{supp}
\newcommand{\tp}{\bar{\otimes}}
\newcommand{\Vir}{\mathcal{V}\mathrm{ir}}
\newcommand{\Vect}{\mathcal{V}\mathrm{ect}}
\begin{document}
\bibliographystyle{plain}
\title{Deligne-Riemann-Roch Theorems I. Uniqueness of Deligne Pairings and Degree $1$ Part of Deligne-Riemann-Roch Isomorphisms}
\author{Mingchen Xia}
\maketitle

\begin{abstract}
In this paper, we establish a uniqueness theorem for Deligne pairings and prove the degree $1$ part of the Deligne-Riemann-Roch theorem.
\end{abstract}
\setcounter{tocdepth}{1}
\tableofcontents
\section{Introduction}
This is the first of a series of papers. Our final goal is to establish Deligne-Riemann-Roch isomorphisms in various settings.
\subsection{Motivation}
Let $X$, $S$ be arithmetic varieties, namely projective flat scheme over $\Spec \mathbb{Z}$, whose fibre at infinity is regular. Let $f:X\rightarrow S$ be a projective smooth morphism between them. Let $\hat{E}$ be a Hermitian vector bundle on $X$. Let $\hat{\lambda}(\hat{E})$ be the determinant line bundle of $\hat{E}$ along $f$ equipped with Quillen metric.

The celebrated \emph{arithmetic Riemann-Roch theorem} calculates the arithmetic Chern class of $\lambda(E)$ in terms of the arithmetic characteristic classes of $E$ and $T_{X/S}$. See Corollary~\ref{cor:arr} for the precise statement. 
From this piece of information, the isometric class of the line bundle $\hat{\lambda}(\hat{E})$ is determined up to torsion, see~\cite{Sou94lec} Chapter~III, Proposition~1. 

We are then interested in reconstructing $\hat{\lambda}(\hat{E})$ directly from the arithmetic characteristic classes.

We shall use the following notations throughout the paper: $(f:X\rightarrow S)\in \mathcal{F}^n$ means that $f$ is a projective flat morphism of pure relative dimension $n$ between schemes.
\subsection{Deligne pairings}
The main tool that we use
is the so-called \emph{Deligne pairings}. 
This was first proposed in \cite{Del87} by Deligne. In that paper, Deligne provides a conjectural picture of the Deligne pairings. 

To be precise, let $(f:X\rightarrow S)\in \mathcal{F}^n$. Let $L_0,\dots,L_n$ be line bundles on $X$, Deligne conjectured that there exists a functorial choice of a line bundle $\langle L_0,\dots,L_n\rangle_{X/S}$ on $S$, such that
\[
c_1\langle L_0,\dots,L_n\rangle_{X/S}=\int_{X/S}c_1(L_0)\dots c_1(L_n).
\]
Here and in the whole paper, we use interchangeably the notations $\int_{X/S}$ and $f_*$ for the push-forward in Chow group or in some cohomology theory. In complex setting, the smoothness of such operators has been fully studies by Stoll (\cite{Sto66}). We summarize some points in Appendix~\ref{sec:appFib}.
 
Several properties of the pairing are required. (See Definition~\ref{def:DP} for a precise statement.)
Roughly speaking, the requirements are as follows:

1) The pairing should be linear, symmetric and functorial. (See Appendix~\ref{sec:appPic} for the precise meaning.)

2) When $L_0$ can be written as $\mathcal{O}_X(D)$ for a non-zero effective relative Cartier divisor $D$ on $X$, we require that there is a functorial isomorphism
\[
\langle L_0,\dots,L_n \rangle_{X/S}\cn \langle L_1|_{D}\dots,L_n|_{D} \rangle_{D/S}.
\]

3) When $n=0$, we require that the Deligne pairing coincides with the usual norm.

4) When $L_0=f^*L$ for a line bundle on $S$, the pairing should be isomorphic to $L^N$. This number $N$ should be equal to the intersection number of $L_1,\dots,L_n$ on the generic fibre.

Actually, Deligne conjectured that a similar line bundle can be constructed for a general homogeneous polynomial of degree $n+1$ in the Chern classes of vector bundles on $X$.

Deligne also conjectured that there is a functorial isomorphism
\[
\lambda(E)\cn \langle \RR^{n+1}(E)\rangle_{X/S}
\]
for any vector bundle on $E$. 

Deligne actually proved this for $n=1$.

The first successful construction of Deligne pairings is given in \cite{Elk89} by Elkik. Later on \cite{Gar00} relaxed the conditions in Elkik's paper.
There is another approach to Deligne pairings in the literatures, see~\cite{Duc05} for example.

\subsection{Uniqueness of Deligne pairings}
Although the existence of Deligne pairings is well-established in the literatures, there has not been any uniqueness theorems. So we prove one in this paper to fill this gap.

The uniqueness theorem is Theorem~\ref{thm:uniqueness}, it states that under certain natural requirements, the Deligne pairing with the properties stated in the previous subsection is unique up to a unique natural isomorphisms.

A naive idea  is to take divisors of line bundles to reduce the dimension using 2) and induction. 
But this method does not work. The problem is, we require the isomorphisms to be functorial, so they cannot depend on the choice of a specific divisor. 

To overcome this, we actually use an idea of Elkik: by a base extension, we can always assume that a canonical divisor exists. For the details of this point, see Subsection~\ref{subsec:univext}. We shall call this method the method of universal extension.
Roughly speaking, when $L$ is sufficiently ample (Definition~\ref{def:sufamp}), one may construct a projective space $\mathbb{P}$ on $S$, so that the base extension $U:=X\times_S \mathbb{P}$ behaves better. Actually, the line bundle 
\[
\mathcal{L}=L\boxtimes \mathcal{O}_{\mathbb{P}}(1)
\]
admits a canonical section. So we can reduce the definition of
\[
\langle \mathcal{L},L_1,\dots,L_n\rangle
\]
(pull-back maps omitted) to relative dimension $n-1$ if we known that the section is regular.(Namely, the zero scheme $Z(\sigma)$ is flat over $S$.) This is indeed true outside the inverse image of a codimension $2$ set of $\mathbb{P}$. By a descendent result proved below, up to some $\mathcal{O}(1)$ torsion, there is a functorial choice of its descendent.

We still have to deal will the extra factor in $\mathcal{L}$, namely the $\mathcal{O}_{\mathbb{P}}(1)$ torsion, it can be eliminated by 4).

After checking a few compatibility conditions, we conclude the uniqueness of Deligne pairings.

In practice, we find that the universal extension method is more powerful than the uniqueness theorem itself.

\subsection{Deligne-Riemann-Roch theorem}
We are ready to prove the conjecture of Deligne:
\begin{theorem}
Let $(f:X\rightarrow S)\in \mathcal{F}^n$. Assume that $f$ is l.c.i.(locally complete intersection).
Let $E$ be a vector bundle on $X$. Then there is a  functorial isomorphism between functors $\Vect(X)\rightarrow \PIC_{\mathbb{Q}}(S)$
\[
\lambda_{X/S}(E)\cn \langle \RR^{n+1}(E)\rangle_{X/S}.
\]

Moreover, this isomorphism is compatible with short exact sequences. 
\end{theorem}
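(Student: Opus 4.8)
The plan is to derive the isomorphism from the uniqueness theorem (Theorem~\ref{thm:uniqueness}) and the universal extension method, via a double induction: on the relative dimension $n$, and on a reduction of an arbitrary vector bundle to line bundles. The base case $n=1$ is Deligne's original theorem, which I take as established. Two structural facts drive the argument. First, the determinant of cohomology is \emph{multiplicative}: a short exact sequence $0\to E'\to E\to E''\to 0$ induces a canonical isomorphism $\lambda_{X/S}(E)\cn\lambda_{X/S}(E')\otimes\lambda_{X/S}(E'')$. Second, since $\ch$ is additive, the Riemann--Roch class satisfies $\RR^{n+1}(E)=\RR^{n+1}(E')+\RR^{n+1}(E'')$, so the multilinearity of the Deligne pairing yields a canonical isomorphism $\langle\RR^{n+1}(E)\rangle_{X/S}\cn\langle\RR^{n+1}(E')\rangle_{X/S}\otimes\langle\RR^{n+1}(E'')\rangle_{X/S}$. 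These two structures are exactly what the final compatibility assertion concerns.

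First I would reduce the existence of the isomorphism to the case $E=L$ of a line bundle. Pulling back along a suitable flag bundle, to which both functors lift compatibly since pull-back commutes with $\lambda$ and with Deligne pairings by axiom 1), the splitting principle writes $E$ as an iterated extension of line bundles; by the two structural isomorphisms above, an isomorphism for each line bundle quotient assembles into one for $E$. Here the l.c.i.\ hypothesis on $f$ guarantees that $T_{X/S}$ is a perfect complex with a well-defined Todd class, so that $\RR^{n+1}(L)=[\ch(L)\Td(T_{X/S})]^{(n+1)}$ is a genuine universal polynomial in $c_1(L)$ and the Chern classes of $T_{X/S}$, and $\langle\RR^{n+1}(L)\rangle_{X/S}$ unwinds into a product of Deligne pairings of line bundles.

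The core is the inductive step for a line bundle $L$. Here I would invoke the universal extension method of Subsection~\ref{subsec:univext}: after replacing $L$ by a sufficiently ample twist (Definition~\ref{def:sufamp}), pass to $U=X\times_S\mathbb{P}$, where $\mathcal{L}=L\boxtimes\mathcal{O}_{\mathbb{P}}(1)$ acquires a canonical section whose zero scheme $Z(\sigma)$ is flat over $S$ away from the preimage of a codimension-$2$ locus of $\mathbb{P}$. On $Z(\sigma)$ the relative dimension drops to $n-1$, so the inductive hypothesis applies there; axiom 2) reduces the relevant Deligne pairing to $Z(\sigma)/S$, while the determinant of cohomology reduces along $0\to E\otimes\mathcal{L}^{-1}\to E\to E|_{Z(\sigma)}\to 0$. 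The surplus $\mathcal{O}_{\mathbb{P}}(1)$-factor is cleared by axiom 4), and the descendent result brings the resulting isomorphism back down from $\mathbb{P}$ to $S$.

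The main obstacle is \emph{functoriality}: the divisor reduction a priori depends on the chosen section, whereas the desired isomorphism must be canonical and natural in $E$. This is precisely what the universal extension resolves, by replacing the non-canonical choice of a divisor with the canonical section over $\mathbb{P}$, and what the uniqueness theorem then certifies: once both $\lambda_{X/S}(-)$ and $\langle\RR^{n+1}(-)\rangle_{X/S}$ are shown to satisfy the characterizing axioms 1)--4), Theorem~\ref{thm:uniqueness} provides a \emph{unique} natural isomorphism between them, forcing the locally constructed isomorphisms to glue independently of any choice. Finally, the compatibility with short exact sequences follows from the same uniqueness principle: both composites built from the two structural isomorphisms of the first paragraph are natural isomorphisms satisfying the hypotheses of Theorem~\ref{thm:uniqueness}, hence coincide.
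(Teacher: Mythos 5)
Your overall strategy (universal extension plus induction on the relative dimension, with a splitting-principle reduction to line bundles) matches the paper's skeleton, but two of your load-bearing steps do not work as stated. First, you invoke Theorem~\ref{thm:uniqueness} both to produce the isomorphism and to prove compatibility with exact sequences, on the grounds that ``both $\lambda_{X/S}(-)$ and $\langle\RR^{n+1}(-)\rangle_{X/S}$ satisfy the characterizing axioms 1)--4)''. But the uniqueness theorem compares two \emph{Deligne pairings}, i.e.\ symmetric multi-additive functors $\PIC(X)^{n+1}\to\PIC(S)$ equipped with $\alpha,\beta,\gamma,\delta$ data; $\lambda_{X/S}(-)$ is a one-variable functor on $\Vect(X)$ and carries no such structure, so the theorem simply does not apply to it. In particular $\lambda$ satisfies no analogue of the $\beta$-axiom: restriction to a divisor is governed by Lemma~\ref{lma:lambdaind}, which produces $\lambda_{D/S}(L|_D\otimes N_{D/X})$ rather than a pairing on $D$. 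The paper never uses Theorem~\ref{thm:uniqueness} in this proof; it only uses the weaker Principle~\ref{prin:verif}, and both the isomorphism and its exact-sequence compatibility are built explicitly (the latter via an embedding $X\hookrightarrow\mathbb{P}^N_S$ and the constructions of \cite{KM77}), not extracted from a uniqueness statement.

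Second, and more seriously, your inductive step hides the actual difficulty. When you cut by the canonical section and pass to $D=Z(\sigma)$, the two sides do \emph{not} reduce in parallel: $\lambda$ reduces cleanly via Lemma~\ref{lma:lambdaind}, but on the pairing side Proposition~\ref{prop:RRrest} gives
\[
i^*\RR_{X/S}(L)=\RR_{D/S}(L)\sum_{j=0}^{\infty}\frac{(-1)^j}{j!}B_j\, i^*c_1^j(\mathcal{O}_X(D)),
\]
so restriction introduces an entire tail of Bernoulli-weighted correction terms. Making the two reductions agree is exactly what forces the paper to invert the relation between $\RR^{n+1}(jL)$ and $\Td^{n+1-i}(T_{X/S})c_1^i(L)$ via the inverse Vandermonde matrices $A^{n+1}_{i,j}$ (Lemma~\ref{lma:reductiondrr}), to polarize (Lemma~\ref{lma:temp1}), and to verify the cancellation identities of Lemmas~\ref{lma:temp2}--\ref{lma:temp4} using Appendix~\ref{sec:appCom}; none of this appears in your proposal, and without it the induction does not close. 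Moreover, divisor reduction only ever controls differences of the form $\lambda(L+\mathcal{O}_X(D))-\lambda(L)$, so it can never trivialize the $L$-independent discrepancy $U_{X/S}=\lambda_{X/S}(0)-\langle\Td^{n+1}(T_{X/S})\rangle_{X/S}$; the paper needs a separate argument (embedding into $\mathbb{P}V$ and the Koszul resolution of $i_*\mathcal{O}_X$) to kill this term, and your proposal --- which also takes the base case $n=1$ on faith rather than proving $n=0$ as the paper does --- has nothing in its place.
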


We shall give a proof that does not rely on the Grothendieck-Riemann-Roch theorem.

One may try to prove this theorem by following the proof of Grothendieck-Riemann-Roch theorem by means of the \emph{d\'{e}vissage} argument of Grothendieck, but we have some difficulities in doing so. Assume that $f$ fits into a commutative diagram as
\[
\begin{tikzcd}
X \arrow[r,"i",hookrightarrow] \arrow[dr,"f"]& \mathbb{P}V \arrow[d,"\pi"] \\ 
&S
\end{tikzcd},
\]
where $V$ is a vector bundle on $S$.

There is no problem for the $\pi$ part. But to deal with the $i$ part, one has to choose a locally free resolution of $i_*E$, say
\[
0 \rightarrow E_N \rightarrow \dots \rightarrow E_1 \rightarrow i_*E\rightarrow 0,
\]
then
\[
\lambda(E)\cong \sum_i (-1)^i \lambda(E_i).
\]

It is not clear that the Deligne pairing part has the same relation, although it should be true by a formal application of Grothendieck-Riemann-Roch theorem. Moreover, we would like to avoid the use of Grothendieck-Riemann-Roch theorem in our proof, so we adopt another approach.

When $E$ is trivial, we have the Koszul resolution of $i_*\mathcal{O}_X$, for which, explicit calculation is easy. So we try to reduce to this case.
Namely, it suffices to prove that the difference $U$ between both sides of the Deligne-Riemann-Roch isomorphism is universal, in the sense that $U$ does not depend on the choice of $L$.

After such a reduction, we then try to make induction on $n$. The main point is that, the determinant line bundles behave well under taking divisors, and hence they admit a large number of universal relations. See Subsection~\ref{subsec:detline} for detail.

In practice, we first resolve the desired isomorphism formally to reduce it to an equivalent condition
\[
\langle \Td^{n+1-i}(T_{X/S})c_1^i(L) \rangle_{X}\cn
i! \sum_{j=0}^{n+1}A^{n+1}_{i,j} \lambda_{X}(jL)
\]
for all $i>0$, where $A^{n+1}_{i,j}$ are constants defined in Appendix~\ref{sec:appCom}.

We then do a formal polarization of both sides to change $c_1^i(L)$ into $c_1(L_0)c_1^{i-1}(L)$. Then we apply the universal extension method to reduce the problem to the case where $L_0$ admits a regular section $D$. Hence we may reduce both sides to relative dimension $(n-1)$ case and apply the already established Deligne-Riemann-Roch theorem to conclude the proof.

During this proof, we find a large amount of combinatorial identities to prove. With the help of Wu Baojun, the author is able to prove all of them. Some necessary combinatorial facts are summarized in Appendix~\ref{sec:appCom}.

Since on $\lambda(E)$, there is a natural metric, namely, the Quillen metric, one wonders if it is possible to reconstruct the metric too. Actually, on the Deligne pairing, there is a natural metric, defined by Elkik. (See Subsection~\ref{subsec:metdel})

We want to calculate their difference. The idea is again to apply the \emph{dévissage} method of Grothendieck. This proof is totally parallel to one part in Gillet-Soul\'{e}'s proof of arithmetic Riemann-Roch theorem. (See~\cite{GS91} for example.)

The result is Theorem~\ref{thm:DRRm}.

This version of Deligne-Riemann-Roch theorem implies the arithmetic Riemann-Roch theorem and the Grothendieck-Riemann-Roch theorem.(See Section~\ref{sec:ap})

Finally, we make a remark on some set theoretic issues in this paper. We encounter frequently objects defined naturally that is unique up to unique isomorphisms. To make notations simpler, we always make a choice of a representative. For this, one need to fix a Grothendieck universe from the very beginning. See Appendix~\ref{sec:appPic}.

We also remark that a similar has been established in an unpublished paper of Franke. See also \cite{Fra90} and \cite{Fra91}. But it seems that at least for the $c_1$ piece, our result is more general than Franke's.
\subsection{Organization of this paper}
In Section~\ref{sec:pre}, we construct the universal extension, prove a descendent lemma of line bundles and establish some algebraic relations of determinant line bundles.

In Section~\ref{sec:gen}, we establish the uniqueness of Deligne pairings and reproduce a construction of Elkik.

In Section~\ref{sec:DRR}, we prove the Deligne-Riemann-Roch theorem.

In Section~\ref{sec:met}, we prove several refined versions of Deligne-Riemann-Roch theorem.

In Section~\ref{sec:ap}, we give a few applications of the Deligne-Riemann-Roch theorem.

In Appendix~\ref{sec:appCom} written with the help of Wu Baojun, we list some combinatorial identities that are used throughout this paper.

In Appendix~\ref{sec:appPic}, we review necessary definitions in the theory of Picard categories.

In Appendix~\ref{sec:appFib}, we give a simplified proof of a theorem of Stoll on the fibre integration.

\subsection{Acknowledgement}
The author would like to show his gratitude to Professor S\'{e}bastien Boucksom for his patient instruction during the last 9 months and for introducing the concept of Deligne pairings to the author.

The author would like to thank Wu Baojun for helping calculate some combiniatonal identities, Dennis Eriksson for a lot of discussions and for correcting a few mistakes in this paper, and Robert Berman for some discussions.

\section{Preliminaries on algebraic geometry}\label{sec:pre}

\subsection{Universal extension}\label{subsec:univext}
Let $(f:X\rightarrow S)\in \mathcal{F}^n$. Namely, $f$ is a projective flat morphism of pure relative dimension $n$ between schemes.

We assume moreover that $S$ is locally noetherian in this subsection.
Let $L$ be a line bundle on $X$. 
\begin{definition}\label{def:sufamp}
We say $L$ is \emph{sufficiently ample} with respect to $f$ if
\begin{enumerate}
\item $L$ is very ample relative to $f$.
\item $R^i f_*L=0$ for all $i>0$.
\end{enumerate}
\end{definition}

Assume that $L$ is sufficiently ample for the time being.

Assume that $E$ is a vector bundle on $S$ such that there is a surjection
\[
g:f^*E\twoheadrightarrow L.
\]

For example, we may take $E$ as $(f_* L)^\vee$, and $g$ is then the canonical morphism $f^* (f_*L)^\vee\twoheadrightarrow L$, which is surjective since $L$ is very ample relative to $f$.

Now we define 
\[
\mathbb{P}:=\mathbb{P}_S(E^\vee).
\] 

We have a Cartesian square
\begin{equation}
\begin{tikzcd}
\mathbb{P}\times_S X \arrow[d,"\pi_1"]\arrow[r,"\pi_2"] & X \arrow[d,"f"] \\
\mathbb{P} \arrow[r,"\pi"] &S
\end{tikzcd}.
\end{equation}

Let $\mathcal{L}$ be the following line bundle on $\mathbb{P}\times_S X$,
\[
\mathcal{L}=\mathcal{O}_{\mathbb{P}}(1)\boxtimes L.
\]

The line bundle $\mathcal{L}$ has a canonical section defined as follows:
A section of $\mathcal{L}$ is a morphism
\[
\mathcal{O}_{\mathbb{P}\times_S X}\rightarrow \pi_1^*\mathcal{O}_{\mathbb{P}}(1)\otimes \pi_2^* L. 
\]

It is then identified with
\[
L^\vee \rightarrow \pi_{2*} \pi_1^*\mathcal{O}_{\mathbb{P}}(1)=f^*\pi_* \mathcal{O}_{\mathbb{P}}(1)=f^* (E^\vee)=(f^*E)^\vee.
\]

Hence, $g$ induces a section of $\mathcal{L}$. We shall denote this section as $\sigma$.

We refer to this construction as universal extension.

\begin{proposition}\label{prop:outcod2}
The section $\sigma$ is regular. 

Moreover, there exists a closed subset $Z\subset \mathbb{P}$ of codimension at least $n+1$, such that
$(\pi_1:Z(\sigma)-\pi_1^{-1}(Z)\rightarrow \mathbb{P}-Z)\in \mathcal{F}^{n-1}$.

Moreover, locally $Z$ is a union of finite projective linear subbundles of $\mathbb{P}$, each having codimension at least $n+1$.
\end{proposition}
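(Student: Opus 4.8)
The plan is to analyse the section $\sigma$ fibrewise, first over $S$ to get regularity and then over $\mathbb{P}$ to get the relative structure of $Z(\sigma)$. For regularity I would apply the relative effective Cartier divisor criterion to the flat morphism $\mathbb{P}\times_S X\to S$ (a base change of $f$). Set $r=\mathrm{rank}\,E$, so the fibre over $s\in S$ is $\mathbb{P}^{r-1}_{\kappa(s)}\times X_s$. Since $\mathbb{P}^{r-1}$ is integral and smooth over $\kappa(s)$, the associated points of this product are exactly the $\eta\times\xi$ with $\eta$ the generic point of $\mathbb{P}^{r-1}$ and $\xi\in\mathrm{Ass}(X_s)$. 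Restricting $\sigma$ to $\mathbb{P}^{r-1}\times\{\xi\}$ gives the section of $\mathcal{O}(1)$ attached to the functional $g_\xi\colon E_s\otimes\kappa(\xi)\to L_\xi$, which is nonzero because $g$ is surjective; hence $\sigma$ vanishes at no associated point, so it is a non-zero-divisor on every fibre. The relative Cartier divisor criterion then yields that $\sigma$ is regular and that $Z(\sigma)\to S$ is flat.

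Next I would identify the fibres of $\pi_1$. A point $p\in\mathbb{P}$ over $s$ corresponds to a line $\ell=\langle v\rangle\subseteq E_s$, and unwinding the definition of $\sigma$ shows that the scheme-theoretic fibre $\pi_1^{-1}(p)$ is $V(\phi_s(v))\subseteq X_s$, the zero scheme of the image section $\phi_s(v)\in H^0(X_s,L_s)$ of $v$; here I use $R^{>0}f_*L=0$, so that $f_*L$ is locally free with $(f_*L)_s=H^0(X_s,L_s)$. The fibre is a Cartier divisor, pure of dimension $n-1$, precisely when $\phi_s(v)$ is a non-zero-divisor on $X_s$; the bad locus is where $\phi_s(v)$ vanishes on some integral associated subvariety $W\subseteq X_s$, i.e. $v\in\ker\bigl(r_W\colon E_s\to H^0(W,L|_W)\bigr)$, which is the linear subspace $\mathbb{P}(\ker r_W)$ of codimension $\dim\,\mathrm{im}\,r_W$. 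The key geometric input is that, because $g$ is surjective, the sections $r_W(E_s)$ generate $L|_W$, so the induced morphism $W\to\mathbb{P}((\mathrm{im}\,r_W)^\vee)$ is the restriction to $W$ of a linear projection whose centre is disjoint from $W$; such a projection is finite, whence $\dim\,\mathrm{im}\,r_W\ge\dim W+1$. For the top-dimensional components $(\dim W=n)$ this gives codimension $\ge n+1$, and I would take $Z$ to be the union of the corresponding $\mathbb{P}(\ker r_W)$.

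It then remains to check that $\bigl(\pi_1\colon Z(\sigma)-\pi_1^{-1}(Z)\to\mathbb{P}-Z\bigr)\in\mathcal{F}^{n-1}$. Projectivity is immediate, as $Z(\sigma)$ is closed in the projective $\mathbb{P}$-scheme $\mathbb{P}\times_S X$. For flatness and pure relative dimension $n-1$ I would invoke the \emph{crit\`ere de platitude par fibres}: since both $Z(\sigma)\to S$ and $\mathbb{P}\to S$ are flat, $\pi_1$ is flat at a point exactly when its restriction to the $S$-fibre is, which reduces the question to $S=\Spec k$. Over a field, away from $Z$ every fibre $V(\phi_s(v))$ is a hyperplane section containing no component of $X_s$, hence is pure of dimension $n-1$, and flatness follows from the relative Cartier divisor criterion once $\phi_s(v)$ is a fibrewise non-zero-divisor. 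Finally, for the linear-subbundle description I would work locally on $S$, where $X$ decomposes into relative components $X_\alpha$; the sheaf maps $r_\alpha\colon E\to f_{\alpha*}(L|_{X_\alpha})$ have fibrewise rank $\ge n+1$, so their kernels cut out linear subbundles $\mathbb{P}(\ker r_\alpha)\subset\mathbb{P}$ of codimension $\ge n+1$, and $Z$ is their union.

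The delicate point, which I expect to be the main obstacle, is the control of \emph{all} associated points of the fibres $X_s$ in the flatness step: the finite-projection bound only gives codimension $\ge\dim W+1$ for an associated subvariety $W$, so a low-dimensional embedded associated point could a priori create a non-flat locus of codimension $<n+1$ lying outside $Z$. One therefore needs the fibres to have no embedded components, so that $\mathrm{Ass}(X_s)$ consists only of the generic points of the top-dimensional components; this is automatic when $f$ is Cohen--Macaulay, as in the l.c.i.\ setting of the main theorem, and guarantees that the non-flat locus is exactly the union of the linear subbundles coming from the top-dimensional components. Granting this, the remaining verifications---that this union is closed of codimension $\ge n+1$ via semicontinuity of the ranks of the $r_\alpha$, and that the restricted fibres are pure of dimension $n-1$---are routine.
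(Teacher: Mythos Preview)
Your argument follows the same route as the paper's: reduce to $S$ irreducible (or, in your version, work fibrewise over $S$), identify the non-flat locus of $\pi_1|_{Z(\sigma)}$ with a union of projective-linear subspaces indexed by the generic points of the top-dimensional components, and bound the codimension of each such subspace using very-ampleness of $L$. The paper's proof is considerably terser than yours---it cites Mu\~noz for regularity of $\sigma$, takes the generic points $\eta_i$ of $X$, asserts that a section is regular iff nonzero at each $\eta_i$, and concludes by noting that $E/K_i$ maps onto $H^0(\overline{\eta_i},L)$ so that $\dim E/K_i\ge n+1$---but the geometric content is the same. Your finite-projection argument for the codimension estimate is a clean alternative to the paper's direct dimension count (and in fact sidesteps the surjectivity claim the paper makes, which is not obviously justified for arbitrary $E$).

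The caveat you raise about embedded associated points is a genuine observation, not excess caution. The paper's assertion that regularity of a section is detected at the generic points $\eta_i$ alone is precisely the statement that there are no embedded associated primes; without it, an embedded associated subvariety $W\subseteq X_s$ of dimension $d<n$ contributes a linear subspace of codimension only $\ge d+1$, spoiling the bound $\ge n+1$. The paper glosses over this. As you note, the hypothesis is automatic once $f$ is Cohen--Macaulay (in particular in the l.c.i.\ setting of the main theorem), so the downstream applications are unaffected; but for the proposition in its stated generality, for arbitrary $(f:X\to S)\in\mathcal{F}^n$, your reservation stands.
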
  
\begin{proof}
That $\sigma$ is regular is shown in \cite{Mun95} 2.2. We prove the other parts.	
	
We may assume without loss of generality that $S$ is irreducible and affine.

Let $\eta_i$ be the generic points of $X$. A section $s\in H^0(X,L)$ is regular if{f} $s(\eta_i)\neq 0$ for any $i$. And $s(\eta_i)=0$ defines line subspaces $K_i$ of $E$ through $g$. 

Notice that $E/K_i\twoheadrightarrow H^0(\bar{\eta_i},L)$ is surjective, as $L$ is ample, we find $\dim E/K_i\geq n+1$.

The proposition now follows from the definition of $\sigma$.
\end{proof}

One may replace $\mathbb{P}$ by the product 
\[
\mathbb{P}_1\times\dots \times\mathbb{P}_d,
\]
each corresponding to a vector bundle $E_i$ on $S$ and a surjection
\[
g_i:f^*E_i\twoheadrightarrow L.
\]
Now we have a regular sequence of sections $(\sigma_1,\dots,\sigma_d)$, where $\sigma_i$ is the universal section of $\mathcal{L}_i=L\boxtimes \mathcal{O}_{\mathbb{P}_i}(1)$. $(\sigma_1,\dots,\sigma_d)$ has fibrewise codimension $d$ outside a codimension $n+2-d$ subset of $\mathbb{P}_1\times\dots \times\mathbb{P}_d$.

The proofs are similar to the cases given above. Readers can also find a proof in \cite{Gar00} Proposition~2.4.1. 

\subsection{Descendent of line bundles}\label{subsec:desclb}
Now we consider the following problem: Let $S$ be a locally noetherian scheme. 

For the descent problem in this subsection, we may work on each connected component of $S$, so we assume from the beginning that $S$ is connected.

Let
\[
\pi:\mathbb{P} \rightarrow S
\]
be a projective space bundle over $S$. Given a line bundle $\mathcal{L}$ on $\mathbb{P}$, we want to choose functorially a line bundle $L$ on $S$ so that
\[
\mathcal{L}\cong L\otimes \mathcal{O}_{\mathbb{P}}(N)
\]
for some $N\in \mathbb{Z}$. 

For the existence of such decomposition and uniqueness of $N$, see~\cite{EGAII} Remarque~4.2.7.

Now it follows from \cite{H} Exercise~III.12.4 (note that no technique assumption there is necessary) that we may take $L=\pi_*(\mathcal{L}\otimes \mathcal{O}_{\mathbb{P}}(-N))$.

\begin{theorem}\label{thm:desc}
The descent procedure induces equivalences between categories
\[
\PIC(\mathbb{P}) \rightarrow \PIC(S)\times \mathbb{Z},
\]
where $\mathbb{Z}$ is the discrete category on the set $\mathbb{Z}$, the second component is given by the $N$ stated above.
\end{theorem}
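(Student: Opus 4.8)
The plan is to exhibit an explicit quasi-inverse to the descent functor and to reduce the whole statement to two standard facts about projective bundles: the projection formula and the identity $\pi_*\mathcal{O}_{\mathbb{P}}=\mathcal{O}_S$. Write $\Phi\colon \PIC(\mathbb{P})\to \PIC(S)\times\mathbb{Z}$ for the descent functor, sending $\mathcal{L}$ to $(\pi_*(\mathcal{L}\otimes\mathcal{O}_{\mathbb{P}}(-N)),N)$, where $N=N(\mathcal{L})$ is the twisting integer of the decomposition; as $S$ is connected this $N$ is a single well-defined integer by the uniqueness recalled above, and any isomorphism $\mathcal{L}\to\mathcal{L}'$ forces $N(\mathcal{L})=N(\mathcal{L}')$, so $\Phi$ is well defined on morphisms by pushing forward along $\pi$. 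In the opposite direction I would introduce the manifestly functorial (indeed monoidal) candidate
\[
\Psi\colon \PIC(S)\times\mathbb{Z}\to\PIC(\mathbb{P}),\qquad (L,N)\mapsto \pi^*L\otimes\mathcal{O}_{\mathbb{P}}(N),
\]
and prove that $\Phi$ and $\Psi$ are quasi-inverse.

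For $\Phi\circ\Psi\cong\mathrm{id}$, I would compute $\Phi(\pi^*L\otimes\mathcal{O}_{\mathbb{P}}(N))$: uniqueness of the twisting integer gives back $N$, while the line bundle part is $\pi_*(\pi^*L)\cong L\otimes\pi_*\mathcal{O}_{\mathbb{P}}\cong L$ by the projection formula together with $\pi_*\mathcal{O}_{\mathbb{P}}=\mathcal{O}_S$. For $\Psi\circ\Phi\cong\mathrm{id}$, the key observation is that the adjunction counit $\pi^*\pi_*\mathcal{M}\to\mathcal{M}$ is an isomorphism whenever $\mathcal{M}$ is pulled back from $S$; applying this to $\mathcal{M}=\mathcal{L}\otimes\mathcal{O}_{\mathbb{P}}(-N)\cong\pi^*L$ and tensoring by $\mathcal{O}_{\mathbb{P}}(N)$ produces a canonical isomorphism $\Psi(\Phi(\mathcal{L}))\cn\mathcal{L}$. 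Both constructions are natural in their arguments because the counit and the projection-formula isomorphism are, and because $N$ is constant along isomorphisms.

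Alternatively — and this is where the only genuine content lies — one can bypass $\Phi$ and simply check that $\Psi$ is fully faithful and essentially surjective. Essential surjectivity is precisely the decomposition $\mathcal{L}\cong\pi^*L\otimes\mathcal{O}_{\mathbb{P}}(N)$ recalled before the theorem. For full faithfulness, the integer factor is handled by uniqueness of $N$ (the Hom-set in $\PIC(S)\times\mathbb{Z}$ is empty unless the integers agree), so after twisting by $\mathcal{O}_{\mathbb{P}}(-N)$ it remains to see that
\[
\pi^*\colon \Hom_{\PIC(S)}(L,L')\to\Hom_{\PIC(\mathbb{P})}(\pi^*L,\pi^*L')
\]
is a bijection. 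If either side is nonempty then so is the other, since $\pi^*L\cong\pi^*L'$ forces $L\cong L'$ after applying $\pi_*$ and the projection formula; and in that case both Hom-sets are torsors under the unit groups $\Gamma(S,\mathcal{O}_S^*)$ and $\Gamma(\mathbb{P},\mathcal{O}_{\mathbb{P}}^*)$, with $\pi^*$ equivariant along their inclusion.

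The main obstacle — soft rather than hard — is the identity $\Gamma(\mathbb{P},\mathcal{O}_{\mathbb{P}}^*)=\Gamma(S,\mathcal{O}_S^*)$ of global units, which is what makes the torsor map above a bijection; this follows immediately from $\pi_*\mathcal{O}_{\mathbb{P}}=\mathcal{O}_S$, which gives $\Gamma(\mathbb{P},\mathcal{O}_{\mathbb{P}})=\Gamma(S,\mathcal{O}_S)$ and hence equal unit groups. The remaining work is bookkeeping: verifying that the two natural isomorphisms in the quasi-inverse argument are compatible with composition and with the monoidal structure, so that the resulting equivalence is in fact an equivalence of Picard categories. I expect no difficulty beyond this, connectedness of $S$ being exactly what is needed to make $N$ a well-defined global integer rather than a locally constant function.
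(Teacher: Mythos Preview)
Your argument is correct. The paper does not give its own proof of this theorem; it simply cites \cite{EGAIV} Lemme~21.13.2 and moves on. What you have written is essentially the standard proof that underlies that citation: build the explicit quasi-inverse $\Psi(L,N)=\pi^*L\otimes\mathcal{O}_{\mathbb{P}}(N)$, and reduce the two natural isomorphisms $\Phi\Psi\cong\mathrm{id}$ and $\Psi\Phi\cong\mathrm{id}$ to the projection formula together with $\pi_*\mathcal{O}_{\mathbb{P}}=\mathcal{O}_S$. Your alternative route via full faithfulness and essential surjectivity of $\Psi$ is equally valid, and your identification of the key point—that $\Gamma(\mathbb{P},\mathcal{O}_{\mathbb{P}}^*)=\Gamma(S,\mathcal{O}_S^*)$, hence automorphisms of line bundles descend—is exactly what is needed for the paper's applications (where this theorem is invoked to say that isomorphisms between Deligne pairings may be constructed after pulling back to $\mathbb{P}$). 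There is nothing to add beyond the bookkeeping you already flagged.
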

For the proof, see \cite{EGAIV} Lemme 21.13.2 .

In particular, in order to establish an isomorphism between descendent line bundles, it suffices to do so for the original line bundles.

Now we shall recall briefly the notion of parafactoriality  developed in \cite{EGAIV} 21.13.

\begin{definition}
Let $X$ be a locally noetherian scheme. Let $Y\subset X$ be a closed subset. Let $U=X-Y$. We say the pair $(X,Y)$ is \emph{parafactorial} if the restriction
\[
\PIC(X)\rightarrow \PIC(U)
\]
is an equivalence of categories.

A noetherian local ring $(R,\mathfrak{m})$ is \emph{parafactorial} if $(\Spec R,\{\mathfrak{m}\})$ is parafactorial.
\end{definition}
This implies that $\Pic(X)\cn \Pic(U)$, but the converse is not true in general.

\begin{proposition}\label{prop:para}
1. $(X,Y)$ is parafactorial if{f} for any $y\in Y$, $\mathcal{O}_{X,y}$ is parafactorial.

2. Assume $Y$ has codimension at least $2$ and $X$ is locally factorial, then $(X,Y)$ is parafactorial. 

3. Let $(R,\mathfrak{m})$ be a noetherian local ring, then $(R,\mathfrak{m})$ is parafactorial if{f}

a. $\Pic(\Spec R-\{\mathfrak{m}\})=0$.

and 

b. $\mathrm{depth}\, R\geq 2$.

4. Assume that $(X,Y)$ is parafactorial, then for any line bundle $L$ on $X-Y$, the natural morphism is an isomorphism
\[
i^*i_*L\cn L,
\]
where $i:X-Y\hookrightarrow X$ is the inclusion morphism.

5. Assume that the codimension of $Y$ in $X$ is at least $4$ and that $Y$ is locally complete intersection, then $(X,Y)$ is parafactorial.

6. Let $f:X\rightarrow S$ be a flat morphism between locally noetherian schemes. Let $x\in X$, $s=f(x)$. Suppose $\mathcal{O}_{X_s,x}$ is parafactorial and of depth $\geq 3$, then $\mathcal{O}_{X,x}$ is parafactorial.
\end{proposition}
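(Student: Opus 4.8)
The six assertions are all due to Grothendieck, and the plan is to make 3 the analytic core, derive 1, 2 and 4 from it essentially formally, and then treat 5 and 6 --- the genuinely hard parts --- by invoking Lefschetz-type vanishing and a flat deformation argument respectively.

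I would begin with 3. Write $U=\Spec R-\{\mathfrak{m}\}$ and $i:U\hookrightarrow \Spec R$. Since $R$ is local, every line bundle on $\Spec R$ is trivial, so essential surjectivity of $\PIC(\Spec R)\to\PIC(U)$ is literally the statement $\Pic(U)=0$, which is (a). For the rest, note that if the restriction is an equivalence then its quasi-inverse must be $i_*$, so $i_*i^*\cong \mathrm{id}$; applying this to $\mathcal{O}$ gives $R\cn \Gamma(U,\mathcal{O}_U)$. By the exact sequence $0\to H^0_{\mathfrak{m}}(R)\to R\to \Gamma(U,\mathcal{O}_U)\to H^1_{\mathfrak{m}}(R)\to 0$, the isomorphism $R\cn\Gamma(U,\mathcal{O}_U)$ is equivalent to $H^0_{\mathfrak{m}}(R)=H^1_{\mathfrak{m}}(R)=0$, i.e. to $\mathrm{depth}\,R\geq 2$, which is (b). Conversely, (b) yields $i_*i^*\tilde L\cn\tilde L$ for every line bundle $\tilde L$, since depth $\geq 2$ forces sections of a line bundle to extend across the closed point, while (a) yields essential surjectivity; together these make $i^*$ an equivalence. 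This proves 3.

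Granting 3, the remaining easy parts fall out. For 1, parafactoriality of $(X,Y)$ can be tested Zariski-locally on $X$: line bundles on $U=X-Y$ and their isomorphisms glue, so $\PIC(X)\to\PIC(U)$ is an equivalence iff it is one after passing to each stalk $\mathcal{O}_{X,y}$ with $y\in Y$, which is exactly parafactoriality of $\mathcal{O}_{X,y}$. For 2, each such $\mathcal{O}_{X,y}$ is a UFD of dimension $\geq 2$ (since $Y$ has codimension $\geq 2$); such a ring is normal, hence of depth $\geq 2$, and its punctured spectrum is again locally factorial with unchanged, hence trivial, class group, so its $\Pic$ vanishes. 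Thus 3 applies and 1 concludes. For 4, parafactoriality lets me extend $L$ to $\tilde L$ on $X$ with $i^*\tilde L\cong L$; the counit $i^*i_*L\to L$ is then $i^*$ of the map $\tilde L\to i_*i^*\tilde L$, which is an isomorphism by the depth $\geq 2$ extension property established in 3.

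The hard part will be 5 and 6. For 5 I reduce by 1 to an lci local ring $R$ with $\dim R\geq 4$ and, by 3, to proving $\mathrm{depth}\,R\geq 2$ and $\Pic(U)=0$. The depth bound is free: lci rings are Cohen-Macaulay, so $\mathrm{depth}\,R=\dim R\geq 4$. The vanishing $\Pic(U)=0$ is the real content; it is a Grothendieck-Lefschetz statement for the punctured spectrum of a complete intersection, and I would deduce it from the cohomological Lefschetz conditions of SGA~2, the hypotheses ``lci'' and ``codimension $\geq 4$'' being precisely what guarantees the depth estimates on $\mathcal{O}$ and $\mathcal{O}^*$ needed to kill the local Picard group. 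For 6 the strategy is a flat deformation argument: flatness gives the additivity $\mathrm{depth}\,\mathcal{O}_{X,x}=\mathrm{depth}\,\mathcal{O}_{X_s,x}+\mathrm{depth}\,\mathcal{O}_{S,s}$, so the hypothesis $\mathrm{depth}\,\mathcal{O}_{X_s,x}\geq 3$ propagates upward, and one transports the vanishing of the punctured Picard group from the fibre to the total space through the local cohomology sequences and passage to the completion. I expect the delicate point to be exactly this transport of $\Pic$: controlling the Picard group of the punctured spectrum of $\mathcal{O}_{X,x}$ in terms of that of the fibre costs one unit of depth, which is why the fibre is required to have depth $\geq 3$ rather than merely $\geq 2$; making this comparison precise is the crux, and follows the analysis of EGA~IV~21.13.
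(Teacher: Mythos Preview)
Your proposal is essentially correct, but it differs from the paper not in mathematical strategy so much as in form: the paper's proof consists solely of pointers to EGA~IV~\S21.13 (for 1--4) and SGA~2 Expos\'es~XI--XII (for 5--6), with no argument reproduced. You instead sketch the content behind those references, making 3 the analytic core via the local-cohomology criterion $H^0_{\mathfrak m}(R)=H^1_{\mathfrak m}(R)=0$ and deriving 1, 2, 4 from it. Those reductions are sound; the only place to tighten is in 1, where ``can be tested Zariski-locally'' silently slides into ``can be tested at stalks'' --- the passage from stalks to open neighborhoods uses that extending sections and trivializing line bundles spreads out, which is exactly what EGA~IV~21.13.10 supplies. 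Your identification of 5 as a Grothendieck--Lefschetz vanishing for the punctured spectrum of a complete intersection, and of 6 as a flat deformation argument in which the comparison of punctured Picard groups consumes one unit of depth (whence the $\geq 3$ hypothesis), accurately summarizes the SGA~2 results the paper cites. In short, what you wrote is a faithful unpacking of the cited literature; the paper simply delegates rather than rehearses it.
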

\begin{proof}
For 1, see \cite{EGAIV} Proposition~21.13.10.

For 2, see \cite{EGAIV} Exemple~21.13.9(ii).

For 3, see \cite{EGAIV} Proposition~21.13.8.

For 4, see \cite{EGAIV} Proposition~21.13.5 and Lemme~21.13.2.

For 5, see \cite{SGAII} Expos\'{e}~XI Th\'{e}or\`{e}me~3.13. 

For 6, see \cite{SGAII} Expos\'{e}~XII Corollaire~4.8.
\end{proof}
Now we apply this proposition to the situation of Subsection~{subsec:univext}. Using the notations there.
\begin{corollary}\label{cor:para}
$(\mathbb{P},Z)$ is parafactorial if $n\geq 2$ or if $S$ is factorial.
\end{corollary}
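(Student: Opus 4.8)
The plan is to reduce the assertion to a statement about local rings via Proposition~\ref{prop:para}.1 and then treat the two hypotheses separately, invoking a different part of Proposition~\ref{prop:para} in each case. Throughout I use from Proposition~\ref{prop:outcod2} that $Z$ has codimension at least $n+1$ in $\mathbb{P}$ and that, fibrewise over $S$, the subset $Z$ is a union of projective linear subspaces each of codimension at least $n+1$.

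Suppose first that $S$ is factorial. I claim that $\mathbb{P}=\mathbb{P}_S(E^\vee)$ is locally factorial. The question is local on $S$, so we may trivialize $E$ over an affine open and present $\mathbb{P}$ Zariski-locally as $\mathbb{P}^r_A$; the local rings of $\mathbb{P}$ are then localizations of $A[t_1,\dots,t_r]$, and since $A$ is a UFD, Gauss's lemma shows $A[t_1,\dots,t_r]$ is a UFD, hence so are its localizations. As $n\geq 1$, the set $Z$ has codimension at least $2$, so Proposition~\ref{prop:para}.2 applies directly and $(\mathbb{P},Z)$ is parafactorial.

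Now suppose $n\geq 2$. Here I use the relative structure $\pi:\mathbb{P}\to S$, which is smooth, hence flat. By Proposition~\ref{prop:para}.1 it suffices to prove that $\mathcal{O}_{\mathbb{P},y}$ is parafactorial for every $y\in Z$, and I will deduce this from Proposition~\ref{prop:para}.6 applied to $\pi$. Writing $s=\pi(y)$, the fibre $\mathbb{P}_s$ is a projective space over the field $k(s)$, hence regular, and $y$ lies in $Z_s=Z\cap\mathbb{P}_s$, which has codimension at least $n+1$ in $\mathbb{P}_s$. Consequently $\mathcal{O}_{\mathbb{P}_s,y}$ is a regular local ring of dimension at least $n+1\geq 3$. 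Being regular it is Cohen--Macaulay, so its depth equals its dimension and is at least $3$; and being a regular local ring of dimension at least $2$, it is parafactorial by Proposition~\ref{prop:para}.2 applied with $Y=\{\mathfrak{m}\}$. Thus the hypotheses of Proposition~\ref{prop:para}.6 are met, giving that $\mathcal{O}_{\mathbb{P},y}$ is parafactorial, and Proposition~\ref{prop:para}.1 then yields that $(\mathbb{P},Z)$ is parafactorial.

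The only genuinely delicate point is the codimension bookkeeping in the fibres: one must check that the fibrewise bound of Proposition~\ref{prop:outcod2} translates into $\dim\mathcal{O}_{\mathbb{P}_s,y}\geq n+1$ for every, not necessarily closed, point $y\in Z$. This follows because $\overline{\{y\}}\subset Z_s$ forces $\dim\overline{\{y\}}\leq\dim\mathbb{P}_s-(n+1)$, whence $\mathrm{codim}_{\mathbb{P}_s}(y)\geq n+1$, but it is worth recording explicitly, since the depth inequality $\mathrm{depth}\geq 3$ demanded by Proposition~\ref{prop:para}.6 is exactly what distinguishes the hypothesis $n\geq 2$ from the weaker $n\geq 1$ sufficient in the factorial case.
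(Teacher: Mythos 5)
Your proof is correct and is essentially the argument the paper intends: the paper states the corollary as a direct application of Proposition~\ref{prop:para} to the universal-extension setup, with part 2 handling the factorial case (codimension of $Z$ being $n+1\geq 2$) and parts 1 and 6 handling $n\geq 2$ via the fibrewise codimension bound of Proposition~\ref{prop:outcod2}, exactly as you do. One small imprecision in the factorial case: factoriality of $S$ means the \emph{stalks} $\mathcal{O}_{S,s}$ are UFDs, not that the coordinate ring $A$ of an affine open is a UFD (a regular affine curve with nontrivial class group already fails this), so you should apply Gauss's lemma to $\mathcal{O}_{S,s}[t_1,\dots,t_r]$ and note that $\mathcal{O}_{\mathbb{P},y}$ is a localization of that ring; the conclusion that $\mathbb{P}$ is locally factorial then stands unchanged.
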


\subsection{Review of determinant line bundles}\label{subsec:detline}
Fix $(f:X\rightarrow S)\in \mathcal{F}^n$ in this subsection.

Let $E$ be locally free sheaf of finite rank on $X$. Consider the complex $Rf_* E$. According to \cite{SGAVI} Expos\'{e}~3, Proposition~4.8, this complex is perfect. So according to \cite{KM77}, its determinant can be defined and is a line bundle on $S$. We write
\[
\lambda_{f}(E)=\lambda_{X/S}(E)=\lambda(E):=\det Rf_* E.
\]

When $n=0$, the determinant line bundle can be used to define a so-called \emph{norm} of a line bundle. 
\begin{definition}
Let $(f:X\rightarrow S)\in \mathcal{F}^0$. Let $L$ be a line bundle on $X$. Define the \emph{norm} of $L$ as the following line bundle on $S$:
\[
N_{f}(L)=N_{X/S}(L)=N(L):=\lambda(L)\otimes \lambda(\mathcal{O}_X)^{-1}.
\]
For the convention of the tensor product, see Appendix~
\ref{sec:appPic}.
\end{definition}
\begin{lemma}\label{lma:codim0lambda}
Let $(f:X\rightarrow S)\in \mathcal{F}^0$. Let $L_0$, $L_1$ be line bundles on $X$. There is a functorial isomorphism
\[
\lambda(L_0\otimes L_1)\otimes \lambda(\mathcal{O}_X)\cn \lambda(L_0)\otimes \lambda(L_1).
\]
\end{lemma}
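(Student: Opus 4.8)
The plan is to reduce the statement to a purely algebraic identity about determinants of modules over a finite algebra, and then to construct the isomorphism by choosing local generators; the only subtle point will be that the construction is canonical.

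Since $(f:X\rightarrow S)\in\mathcal{F}^0$, the morphism $f$ is finite and flat, so $Rf_*$ is concentrated in degree zero and $\lambda(E)=\det f_*E$. Writing $X=\Specg_S A$ with $A=f_*\mathcal{O}_X$ a locally free $\mathcal{O}_S$-algebra of finite rank, a line bundle $L_i$ on $X$ is the same datum as an invertible $A$-module $M_i:=f_*L_i$, and because $f$ is affine one has $f_*(L_0\otimes L_1)=M_0\otimes_A M_1$. Thus $\lambda(L_i)=\det_{\mathcal{O}_S}M_i$, $\lambda(\mathcal{O}_X)=\det_{\mathcal{O}_S}A$, and the assertion becomes the existence of a canonical isomorphism
\[
\det\nolimits_{\mathcal{O}_S}(M_0\otimes_A M_1)\otimes\det\nolimits_{\mathcal{O}_S}A\cn\det\nolimits_{\mathcal{O}_S}M_0\otimes\det\nolimits_{\mathcal{O}_S}M_1
\]
of line bundles on $S$.

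To build this I would work Zariski-locally on $S$. Passing to the stalk at a point $s\in S$, the ring $A$ is finite over the local ring $\mathcal{O}_{S,s}$, hence semilocal, so every invertible $A$-module is free of rank one; a generator at the stalk spreads out to a generator over a neighborhood. Choosing generators $e_0$ of $M_0$ and $e_1$ of $M_1$ yields a generator $e_0\otimes e_1$ of $M_0\otimes_A M_1$, and each such choice identifies the corresponding determinant with $\det_{\mathcal{O}_S}A$ by taking the top exterior power of the $\mathcal{O}_S$-linear isomorphism $A\xrightarrow{\sim}M_i$ sending $a$ to $ae_i$. Transporting through these identifications defines the desired isomorphism locally.

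The main point — and the only real obstacle — is to verify that this local isomorphism does not depend on the chosen generators, so that the local isomorphisms glue to a global, functorial one. Replacing $e_0$ by $ue_0$ with $u\in A^\times$ has the effect of precomposing the trivialization $A\xrightarrow{\sim}M_0$ with multiplication $m_u$ by $u$, and thus rescales the preferred generator of $\det_{\mathcal{O}_S}M_0$ by the scalar $\det_{\mathcal{O}_S}(m_u)=N_{A/\mathcal{O}_S}(u)$; the same substitution rescales the preferred generator of $\det_{\mathcal{O}_S}(M_0\otimes_A M_1)$ by the \emph{same} scalar, while leaving those of $\det_{\mathcal{O}_S}M_1$ and $\det_{\mathcal{O}_S}A$ untouched. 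Since the $M_0\otimes_A M_1$ factor and the $M_0$ factor sit on opposite sides of the isomorphism, the two occurrences of $N_{A/\mathcal{O}_S}(u)$ cancel, and the isomorphism is unchanged; the argument for $e_1$ is identical, and a change of $\mathcal{O}_S$-basis of $A$ contributes its determinant twice on each side and likewise cancels. Here one uses nothing beyond the multiplicativity $\det_{\mathcal{O}_S}(m_{uv})=\det_{\mathcal{O}_S}(m_u)\det_{\mathcal{O}_S}(m_v)$, i.e.\ that the norm $N_{A/\mathcal{O}_S}$ is a homomorphism on units. Independence of the choices gives gluing, and compatibility with base change $S'\rightarrow S$ follows because $f_*$ of a line bundle and the formation of determinants both commute with flat base change and carry generators to generators; hence the resulting canonical isomorphism is functorial, as required.
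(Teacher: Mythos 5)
Your proof is correct and takes essentially the same route as the paper's: both reduce, via finiteness of $f$, to the assertion that $\det_{\mathcal{O}_S}(M_0\otimes_A M_1)\otimes\det_{\mathcal{O}_S}A\cong\det_{\mathcal{O}_S}M_0\otimes\det_{\mathcal{O}_S}M_1$ for invertible modules over the finite locally free algebra $A=f_*\mathcal{O}_X$, and both prove this by locally trivializing $M_0$ and $M_1$. The only difference is one of explicitness: where the paper's Lemma~\ref{lma:detaffine} says ``we may assume that $M_0$, $M_1$ are both free, the isomorphism is clear,'' you carry out the norm-cancellation check showing the isomorphism is independent of the chosen generators, which is precisely what makes the local isomorphisms glue and be functorial.
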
 
\begin{proof}
Observe that $f$ is actually a finite morphism according to \cite{EGAIV}, Corollaire~18.12.4. 

According to the functoriality of $\lambda$, we may assume that $S=\Spec A$ is affine and it suffices to construct an isomorphism
\[
\det H^0(X,L_0\otimes L_1) \otimes_A \det H^0(X,\mathcal{O}_X)\cn \det H^0(X,L_0)\otimes_A \det H^0(X,L_1).
\] 
Note that $X$ is an affine scheme, say $X=\Spec B$ for some finite $A$-algebra $B$. So we are reduced to Lemma~\ref{lma:detaffine}.
\end{proof}
\begin{lemma}\label{lma:detaffine}
Let $A$ be a commutative noetherian ring. Let $B$ be an $A$-algebra.
Let $M_0$, $M_1$ be finitely generated projective $B$-modules of rank $1$, then there is a functorial isomorphism
\[
\det_A (M_0\otimes_B M_1)\otimes_A \det_A B \cn \det_A M_0 \otimes_A \det_A M_1.
\]  
Moreover, if $M_2$ is another finitely generated projective $B$-module of rank $1$, the following diagram commutes
\[
\begin{tikzcd}
\det_A (M_0\otimes_B M_1 \otimes_B M_2)\otimes_A \det_A B \otimes_A \det_A B \arrow[d,"\sim"] \arrow[r,"\sim"]& \det_A M_0\otimes_A \det_A (M_1\otimes_B M_2)\otimes_A \det_A B \arrow[d,"\sim"]\\
\det_A(M_0\otimes_B M_1) \otimes_A \det_A M_2 \otimes \det_A B \arrow[r,"\sim"] &\det_A M_0 \otimes_A \det_A M_1 \otimes_A \det_A  M_2
\end{tikzcd}
\]
\end{lemma}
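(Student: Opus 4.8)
The plan is to reinterpret the desired isomorphism as the multiplicativity of the norm functor and to construct it by localising $\Spec A$ at each point, where the semilocality of the fibre forces the relevant modules to be free.

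First I would rewrite the statement. Set $N(M):=\det_A M\otimes_A(\det_A B)^{-1}$ for any rank $1$ projective $B$-module $M$; dividing the displayed formula by $\det_A B$, the claim becomes the existence of a functorial isomorphism $N(M_0\otimes_B M_1)\cn N(M_0)\otimes_A N(M_1)$. Since $M_0$, $M_1$ and $B$ all have the same (locally constant) rank $\rk_A B$ as $A$-modules, the two sides are graded invertible $A$-modules of the same degree, so their gradings and the Koszul signs of the tensor product convention of Appendix~\ref{sec:appPic} agree on both sides; it then remains to produce the underlying isomorphism of invertible modules.

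The key reduction is that it suffices to build the isomorphism after localising $A$ at an arbitrary prime, \emph{provided} the local isomorphisms are canonical (independent of all choices), for then they patch to a global one. So fix a prime $\mathfrak{p}\subset A$. Then $B_\mathfrak{p}=B\otimes_A A_\mathfrak{p}$ is finite over the local ring $A_\mathfrak{p}$, hence semilocal, and therefore every finitely generated projective $B_\mathfrak{p}$-module of constant rank is free. In particular I may choose $B_\mathfrak{p}$-bases $e_0,e_1$ of $M_{0,\mathfrak{p}}$, $M_{1,\mathfrak{p}}$; these trivialise $\det_A M_0$, $\det_A M_1$ and $\det_A(M_0\otimes_B M_1)$ against $\det_A B$ (the last via the basis $e_0\otimes e_1$), and composing the resulting trivialisations yields an isomorphism of the localised modules. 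Since each $M_i$ is finitely presented, freeness spreads to a basic open $D(f)\ni\mathfrak{p}$, and these opens cover $\Spec A$.

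The crux is to show that this local isomorphism is independent of the chosen bases, which is exactly where multiplicativity enters. Replacing $e_i$ by $u_i e_i$ with $u_i\in B_\mathfrak{p}^{\times}$ rescales the trivialisation of $\det_A M_i$ by the unit $N_{B_\mathfrak{p}/A_\mathfrak{p}}(u_i)\in A_\mathfrak{p}^{\times}$, the determinant of multiplication by $u_i$ on $B_\mathfrak{p}$; on the left-hand side the basis $e_0\otimes e_1$ is rescaled by $u_0 u_1$, hence by $N_{B_\mathfrak{p}/A_\mathfrak{p}}(u_0 u_1)$. Because $N_{B_\mathfrak{p}/A_\mathfrak{p}}$ is multiplicative, $N_{B_\mathfrak{p}/A_\mathfrak{p}}(u_0 u_1)=N_{B_\mathfrak{p}/A_\mathfrak{p}}(u_0)N_{B_\mathfrak{p}/A_\mathfrak{p}}(u_1)$, so the two rescalings cancel and the composite isomorphism is unchanged. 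Thus the local isomorphisms are canonical, agree on the overlaps $D(f)\cap D(g)$, glue to a global isomorphism over $A$, and the same computation yields naturality in $(M_0,M_1)$ and compatibility with base change in $A$. The main obstacle of the proof is precisely this multiplicativity bookkeeping; everything else is formal. Finally, the commutativity of the displayed square with $M_0,M_1,M_2$ can be checked after passing to the same cover $\{D(f)\}$: in terms of bases $e_0,e_1,e_2$ both paths around the diagram reduce to the associativity of $\otimes_B$ together with the associativity of the scalar multiplication $u\mapsto N_{B_\mathfrak{p}/A_\mathfrak{p}}(u)$, so the two composites coincide, and by canonicity the identity descends to $A$.
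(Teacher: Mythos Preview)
Your proof is correct and follows essentially the same strategy as the paper: reduce to the case where $M_0,M_1$ are free over $B$ and then check directly. The paper's own proof is a two-line sketch (``We may assume that $M_0$, $M_1$ are both free, the isomorphism is clear''), whereas you have carefully justified the reduction via semilocality of $B_{\mathfrak p}$ and verified basis-independence using multiplicativity of $N_{B/A}$, which is exactly the content hidden behind the paper's word ``clear''.
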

\begin{proof}
We may assume that $M_0$, $M_1$ are both free, the isomorphism is clear.

The commutative of the diagram is clear by definition.
\end{proof}

In particular, 
\begin{proposition}\label{prop:addN}
Let $(f:X\rightarrow S)\in \mathcal{F}^0$. Let $L_0$, $L_1$ be line bundles on $X$, then there exists functorial isomorphisms
\begin{equation}\label{eq:Ndist}
N_{X/S}(L_0\otimes L_1)\cn N_{X/S}(L_0)\otimes N_{X/S}(L_0).
\end{equation}
Moreover, the following diagram commutes
\[
\begin{tikzcd}
N(L_0\otimes L_1 \otimes L_2) \arrow[r,"\sim"] \arrow[d,"\sim"]& 
N(L_0\otimes L_1) \otimes N(L_2) \arrow[d,"\sim"]\\
N(L_0)\otimes N(L_1\otimes L_2) \arrow[r,"\sim"] & N(L_0)\otimes N(L_1)\otimes N(L_2)
\end{tikzcd}
\]
\end{proposition}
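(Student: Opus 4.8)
The plan is to deduce both assertions directly from Lemma~\ref{lma:codim0lambda}, simply unwinding the definition $N_{X/S}(L)=\lambda(L)\otimes\lambda(\mathcal{O}_X)^{-1}$ and keeping careful track of the coherence isomorphisms of the symmetric monoidal structure on $\PIC(S)$ reviewed in Appendix~\ref{sec:appPic}. (Note the evident typo in the target of \eqref{eq:Ndist}: the statement to prove is $N_{X/S}(L_0\otimes L_1)\cn N_{X/S}(L_0)\otimes N_{X/S}(L_1)$.)

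First I would establish \eqref{eq:Ndist}. By definition $N(L_0\otimes L_1)=\lambda(L_0\otimes L_1)\otimes\lambda(\mathcal{O}_X)^{-1}$, and Lemma~\ref{lma:codim0lambda} provides a functorial isomorphism $\lambda(L_0\otimes L_1)\otimes\lambda(\mathcal{O}_X)\cn\lambda(L_0)\otimes\lambda(L_1)$. Tensoring both sides with $\lambda(\mathcal{O}_X)^{-2}$ and using the associativity and symmetry constraints to regroup factors yields
\[
N(L_0\otimes L_1)\cn\bigl(\lambda(L_0)\otimes\lambda(\mathcal{O}_X)^{-1}\bigr)\otimes\bigl(\lambda(L_1)\otimes\lambda(\mathcal{O}_X)^{-1}\bigr)=N(L_0)\otimes N(L_1).
\]
Functoriality in each variable is inherited from the functoriality of the isomorphism in Lemma~\ref{lma:codim0lambda}.

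For the associativity square, the strategy is to reduce it to the commutative diagram already proved in Lemma~\ref{lma:detaffine}. By functoriality of $\lambda$ I may assume $S=\Spec A$ is affine, and since $f$ is finite (as recorded in the proof of Lemma~\ref{lma:codim0lambda}) I may write $X=\Spec B$ with $B$ a finite $A$-algebra; the $L_i$ then correspond to rank-one projective $B$-modules $M_i$. Each of the four edges of the $N$-diagram is an instance of \eqref{eq:Ndist}, which after unwinding is the isomorphism of Lemma~\ref{lma:detaffine} tensored with a fixed power of $\det_A B^{-1}=\lambda(\mathcal{O}_X)^{-1}$ and rearranged by the same coherence isomorphisms used above. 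Thus the $N$-square is obtained from the commutative square of Lemma~\ref{lma:detaffine} by tensoring every corner with fixed copies of $\lambda(\mathcal{O}_X)^{-1}$ and postcomposing with identical associativity/symmetry constraints along each path; commutativity is therefore inherited.

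The main obstacle is purely a matter of bookkeeping: one must ensure that the copies of $\lambda(\mathcal{O}_X)^{-1}$ introduced at the four corners are regrouped by the \emph{same} coherence isomorphism along each of the two paths around the square, so that the reduction to Lemma~\ref{lma:detaffine} is faithful rather than merely up to an unchecked discrepancy. This is guaranteed by Mac Lane's coherence theorem for symmetric monoidal categories, so no genuine difficulty arises, and the mathematical content of the proposition resides entirely in Lemmas~\ref{lma:codim0lambda} and~\ref{lma:detaffine}.
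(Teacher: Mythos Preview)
Your proposal is correct and follows essentially the same approach as the paper: the paper's proof simply states that \eqref{eq:Ndist} follows directly from Lemma~\ref{lma:codim0lambda} and that the associativity follows from Lemma~\ref{lma:detaffine}. You have supplied more detail---in particular the bookkeeping with the coherence isomorphisms and the explicit tensoring by $\lambda(\mathcal{O}_X)^{-2}$---but the underlying argument is identical.
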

\begin{proof}
The existence of (\ref{eq:Ndist}) follows directly from Lemma~\ref{lma:codim0lambda}. The associativity follows from Lemma~\ref{lma:detaffine}.
\end{proof}

We shall use additive notations for line bundles for simplicity. 
\begin{lemma}\label{lma:natlambda}
Let $(f:X\rightarrow S)\in \mathcal{F}^0$. Let $g:S'\rightarrow S$ be a morphism from an irreducible noetherian scheme $S'$ to $S$. Let
\[
\begin{tikzcd}
X'  \arrow[d,"f'"] \arrow[r,"g'"]  & X  \arrow[d,"f"]\\
S' \arrow[r,"g"]  &S 
\end{tikzcd}
\]
be the corresponding Cartesian square. Then there is a functorial isomorphism
\[
\lambda_{X'/S'}(g'^*L)\cn g^* \lambda_{X/S}(L).
\]
\end{lemma}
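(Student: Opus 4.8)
The plan is to prove base-change compatibility for the determinant line bundle in relative dimension $0$, i.e. Lemma~\ref{lma:natlambda}, by reducing to the affine situation and invoking the standard base-change property of the Knudsen-Mumford determinant. First I would use the functoriality of $\lambda$ together with the fact that determinant line bundles are compatible with localization on the base to reduce to the case where both $S=\Spec A$ and $S'=\Spec A'$ are affine, and $g$ corresponds to a ring homomorphism $A\to A'$. As in the proof of Lemma~\ref{lma:codim0lambda}, since $(f:X\to S)\in\mathcal{F}^0$ the morphism $f$ is finite by \cite{EGAIV} Corollaire~18.12.4, so $X=\Spec B$ is affine with $B$ a finite $A$-algebra, and the base change $X'=\Spec(B\otimes_A A')$ is likewise affine and finite over $S'$. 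In this situation $Rf_*L$ is represented by the finitely generated $A$-module $H^0(X,L)$ placed in degree $0$ (all higher cohomology vanishing because $f$ is affine), so $\lambda_{X/S}(L)=\det{}_A H^0(X,L)$.

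The key point is then the interaction of the determinant with flat base change. Because $g$ is flat (it is the projection in a Cartesian square coming from a flat, indeed an arbitrary morphism of the base, but here I only need the formation of $Rf_*$ to commute with $g$), flat base change gives a canonical isomorphism $g^*Rf_*L\cong Rf'_*g'^*L$ of perfect complexes on $S'$; in the affine finite setting this is simply the natural isomorphism $H^0(X,L)\otimes_A A'\cong H^0(X',g'^*L)$. I would then invoke the functoriality of the Knudsen-Mumford determinant under base change, namely that $\det$ commutes with pullback of perfect complexes, to obtain
\[
\lambda_{X'/S'}(g'^*L)=\det{}_{A'}H^0(X',g'^*L)\cong\det{}_{A'}\bigl(H^0(X,L)\otimes_A A'\bigr)\cong\bigl(\det{}_A H^0(X,L)\bigr)\otimes_A A'=g^*\lambda_{X/S}(L).
\]
Gluing these local isomorphisms over an affine cover of $S'$ produces the desired global functorial isomorphism, the gluing being legitimate precisely because each local isomorphism is the canonical one.

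The main obstacle, such as it is, is bookkeeping rather than a genuine mathematical difficulty: one must check that the locally defined isomorphisms are independent of the chosen affine charts so that they glue, and that the resulting global isomorphism is functorial in $L$ and compatible with composition of base-change morphisms. All of this follows formally once one knows that the base-change isomorphism for $Rf_*$ and the base-change compatibility of $\det$ are themselves canonical and functorial; the hypothesis that $S'$ be irreducible noetherian is used only to keep $X'$ within the class of schemes under consideration and plays no essential role in the argument. I would therefore treat this as a routine consequence of the functorial properties of $\lambda$ already packaged in \cite{KM77}, with the affine reduction serving mainly to make the base-change isomorphism completely explicit.
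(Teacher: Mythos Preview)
Your approach is essentially the same as the paper's: reduce to the affine situation and verify the module-theoretic identity $\det_{A'}(M\otimes_B B')\cong(\det_A M)\otimes_A A'$. The paper's proof is terser but follows exactly this route.

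There is, however, a genuine confusion in your justification that is worth correcting. You write that ``$g$ is flat'' and invoke flat base change, but $g$ is \emph{not} assumed flat; indeed the Remark immediately following the lemma in the paper stresses that it is crucial that $g$ be allowed to be arbitrary. Your parenthetical hedge (``an arbitrary morphism of the base, but here I only need the formation of $Rf_*$ to commute with $g$'') does not rescue the sentence, because general cohomology-and-base-change is not automatic for arbitrary $g$. The correct reason the base-change map is an isomorphism here has nothing to do with $g$: since $f$ is finite flat and $L$ is a line bundle, $f_*L$ is locally free and $R^if_*L=0$ for $i>0$, so the complex $Rf_*L$ is a vector bundle concentrated in degree $0$ and its formation commutes with \emph{arbitrary} base change. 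Equivalently, in your affine notation $H^0(X',g'^*L)=M\otimes_B B'=M\otimes_B(B\otimes_A A')=M\otimes_A A'$, which is exactly the isomorphism you need and requires no flatness of $A\to A'$. Once you replace the appeal to ``flat base change'' by this observation, your argument is complete and matches the paper's.
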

\begin{remark}
It is crucial that we do not require $g$ to be flat.
\end{remark}
\begin{proof}
Locally, we may assume that $S=\Spec A$, $S'=\Spec A'$, $X=\Spec B$, $X'=\Spec B'$. $L$ is then identified with a finitely generated projective $B$-module of rank $1$. The problem reduces to establish
\[
\det_{A'} (M\otimes_B B')\cn \det_A M \otimes_A A'.
\]
But LHS equals
\[
\det_{A'} (M\otimes_B B')\cn \det_{A'}(M\otimes_B (B\otimes_A A'))\cn \det_{A'}(M\otimes_A A')\cn \det_{A} M \otimes_A A'.
\]
\end{proof}
\begin{corollary}\label{cor:cartnorm}
Let $(f:X\rightarrow S)\in \mathcal{F}^0$. Let $g:S'\rightarrow S$ be a morphism from an irreducible noetherian scheme $S'$ to $S$. Let
\[
\begin{tikzcd}
X'  \arrow[d,"f'"] \arrow[r,"g'"]  & X  \arrow[d,"f"]\\
S' \arrow[r,"g"]  &S 
\end{tikzcd}
\]
be the corresponding Cartesian square. Then there is a functorial isomorphism
\[
N_{X'/S'}(g'^*L)\cn g^*N_{X/S}(L).
\]
\end{corollary}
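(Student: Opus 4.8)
The plan is to deduce the statement directly from Lemma~\ref{lma:natlambda}, the corresponding naturality statement for the determinant line bundle $\lambda$, simply by unwinding the definition of the norm. There is no new geometric input needed; the entire content has already been isolated in Lemma~\ref{lma:natlambda}.

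First I would recall that by definition $N_{X/S}(L) = \lambda_{X/S}(L) \otimes \lambda_{X/S}(\mathcal{O}_X)^{-1}$, and likewise $N_{X'/S'}(g'^*L) = \lambda_{X'/S'}(g'^*L) \otimes \lambda_{X'/S'}(\mathcal{O}_{X'})^{-1}$. Since $g'$ is the base change morphism of the Cartesian square, one has a canonical identification $g'^*\mathcal{O}_X = \mathcal{O}_{X'}$, so that $\lambda_{X'/S'}(\mathcal{O}_{X'}) = \lambda_{X'/S'}(g'^*\mathcal{O}_X)$. Next I would apply Lemma~\ref{lma:natlambda} twice: applied to $L$ it yields a functorial isomorphism $\lambda_{X'/S'}(g'^*L) \cn g^*\lambda_{X/S}(L)$, and applied to the trivial bundle $\mathcal{O}_X$ it yields $\lambda_{X'/S'}(g'^*\mathcal{O}_X) \cn g^*\lambda_{X/S}(\mathcal{O}_X)$.

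Finally, because $g^*$ is a monoidal functor on the relevant Picard categories, commuting with $\otimes$ and with passage to inverses (see Appendix~\ref{sec:appPic}), I would combine the two isomorphisms to obtain
\[
N_{X'/S'}(g'^*L) \cn g^*\lambda_{X/S}(L) \otimes \bigl(g^*\lambda_{X/S}(\mathcal{O}_X)\bigr)^{-1} \cn g^*\bigl(\lambda_{X/S}(L)\otimes \lambda_{X/S}(\mathcal{O}_X)^{-1}\bigr) = g^*N_{X/S}(L),
\]
which is the desired isomorphism.

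I do not expect a genuine obstacle here. The only point requiring a little care is the bookkeeping of functoriality: one should check that the isomorphism produced is natural in $L$ and compatible with the additivity isomorphisms of Proposition~\ref{prop:addN}, which amounts to fitting the two invocations of Lemma~\ref{lma:natlambda} and the monoidality of $g^*$ into the appropriate commuting diagrams. This verification is routine given the explicit local description of $\lambda$ used in the proof of Lemma~\ref{lma:natlambda}, where everything reduces to the behaviour of $\det_A$ under base change along $A \to A'$.
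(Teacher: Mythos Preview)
Your proposal is correct and is precisely the intended argument: the paper states this result as a corollary of Lemma~\ref{lma:natlambda} without proof, and your deduction from the definition $N_{X/S}(L)=\lambda_{X/S}(L)\otimes\lambda_{X/S}(\mathcal{O}_X)^{-1}$ together with the monoidality of $g^*$ is exactly how it is meant to be read off.
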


\begin{proposition}\label{prop:normpull}
Let $(f:X\rightarrow S)\in \mathcal{F}^0$. Then there is a canonical isomorphism
\[
N_{X/S}(f^*L)\cn \deg f L
\]
for line bundles $L$ on $S$.
\end{proposition}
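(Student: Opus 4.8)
The plan is to reduce everything to the projection formula together with a standard identity for the determinant of the tensor product of a line bundle with a locally free sheaf. First I would recall that, since $f$ is projective, flat and of pure relative dimension $0$, it is finite and flat, hence finite locally free; in particular $Rf_*=f_*$ and $\mathcal{F}:=f_*\mathcal{O}_X$ is a locally free $\mathcal{O}_S$-module whose rank on each connected component of $S$ equals $\deg f=:d$. Consequently $\lambda(L')=\det f_*L'$ for every line bundle $L'$ on $X$, and by definition $N_{X/S}(f^*L)=\det f_*(f^*L)\otimes(\det f_*\mathcal{O}_X)^{-1}$.

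Next I would apply the projection formula. For a line bundle $L$ on $S$ the canonical morphism gives an isomorphism $f_*(f^*L)\cn L\otimes_{\mathcal{O}_S}\mathcal{F}$, which is valid because $f$ is finite (hence affine) and $L$ is locally free. Taking determinants and unwinding the definition of the norm yields a canonical identification $N_{X/S}(f^*L)\cn \det(L\otimes_{\mathcal{O}_S}\mathcal{F})\otimes(\det\mathcal{F})^{-1}$, so it only remains to compute $\det(L\otimes_{\mathcal{O}_S}\mathcal{F})$.

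The key input is the natural isomorphism $\det(L\otimes_{\mathcal{O}_S}\mathcal{F})\cn L^{\otimes d}\otimes\det\mathcal{F}$ for a line bundle $L$ and a locally free sheaf $\mathcal{F}$ of rank $d$. I would establish this by multilinear algebra: locally, after trivializing $L$ and choosing a frame for $\mathcal{F}$, both sides are canonically trivial and the comparison map is the evident one; the point is that this map is built only out of the functor $\det=\bigwedge^{\mathrm{top}}$ and the action of units of $L$, so it does not depend on the chosen trivializations and therefore glues to a global, canonical isomorphism. Feeding this into the previous paragraph cancels the factor $\det\mathcal{F}$ and yields $N_{X/S}(f^*L)\cn L^{\otimes d}=\deg f\cdot L$ in the additive notation.

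The main obstacle is precisely the canonicity and gluing of the determinant identity: I must verify that the local comparison isomorphisms agree on overlaps (equivalently, that the transition cocycles match, using $\det(gh)=\det g\,\det h$ together with the observation that a unit $u$ scaling $L$ multiplies $\det(L\otimes_{\mathcal{O}_S}\mathcal{F})$ by $u^{d}$ and multiplies $L^{\otimes d}$ by the same factor $u^{d}$), and that the resulting isomorphism is natural in $L$. Everything else is formal. If one prefers, the same argument can be carried out ring-theoretically after reducing to the affine connected case via Corollary~\ref{cor:cartnorm} and Lemma~\ref{lma:natlambda}, exactly as in the proof of Lemma~\ref{lma:codim0lambda}: there $\mathcal{F}$ corresponds to a free $A$-module of rank $d$, and the identity $\det_A(L_A\otimes_A B)\cn L_A^{\otimes d}\otimes_A\det_A B$ is immediate for free modules.
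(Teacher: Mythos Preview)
Your argument is correct and follows essentially the same approach as the paper: both reduce the claim to the multilinear-algebra identity $\det(L\otimes\mathcal{F})\cn L^{\otimes d}\otimes\det\mathcal{F}$ for a line bundle $L$ and a locally free sheaf $\mathcal{F}$ of rank $d$. The paper simply passes to the affine case $S=\Spec A$, $X=\Spec B$ and asserts the corresponding module-theoretic identity as ``well-known,'' whereas you phrase the same computation globally via the projection formula and spell out why the local isomorphisms glue; the content is the same.
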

\begin{proof}
It suffices to consider the case where $S$ is affine, say $S=\Spec A$. So $X$ is identified with $\Spec B$, where $B$ is a finite $A$-algebra. $L$ is identified with a finite generated projective $A$-module $M$ of rank $1$. Then $f^*L$ is identified with the $B$-module $M\otimes_A B$.
The theorem states that there is a canonical isomorphism
\[
\det{}_B(M\otimes_A B) \cn \deg(B/A)(\det{}_A M) \otimes_A B,
\]  
which is well-known.
\end{proof}
We now generalize Lemma~\ref{lma:codim0lambda} to general $n$.

\begin{lemma}\label{lma:lambdaind}
Let $(f:X\rightarrow S)\in \mathcal{F}^n$. Let $L$ be a line bundle on $X$. Let $D$ be an effective relative divisor on $X$. There is a functorial isomorphism
\[
 \lambda_{X/S}(\mathcal{O}_X(D)+L) \cn \lambda_{X/S}(L)+ \lambda_{D/S}(L|_D\otimes N_{D/X}),
\]
where $N_{D/X}$ is the normal bundle of $D$ in $X$.
\end{lemma}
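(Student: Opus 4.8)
The plan is to realize both sides as determinants of derived pushforwards that fit into a single distinguished triangle, and then invoke the multiplicativity of the Knudsen--Mumford determinant. Write $i\colon D\hookrightarrow X$ for the closed immersion. Since $D$ is an effective relative (Cartier) divisor, its ideal sheaf is $\mathcal{O}_X(-D)$, and there is the fundamental exact sequence
\[
0\to \mathcal{O}_X(-D)\to \mathcal{O}_X \to i_*\mathcal{O}_D\to 0.
\]
First I would tensor this with the line bundle $\mathcal{O}_X(D)\otimes L$. Using the projection formula $i_*\mathcal{O}_D\otimes(\mathcal{O}_X(D)\otimes L)\cong i_*\big((\mathcal{O}_X(D)\otimes L)|_D\big)$ together with the standard identification $\mathcal{O}_X(D)|_D\cong N_{D/X}$, this yields the short exact sequence of coherent sheaves on $X$
\[
0\to L\to \mathcal{O}_X(D)\otimes L\to i_*\big(N_{D/X}\otimes L|_D\big)\to 0 .
\]

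Next I would apply $Rf_*$. All three terms have perfect pushforward: the outer two because $L$ and $\mathcal{O}_X(D)\otimes L$ are locally free (so the perfectness cited in the definition of $\lambda$ applies), and the right-hand term because $Rf_*\circ i_* = R(f\circ i)_*$ (as $i$ is a closed immersion, $i_*$ is exact and $Ri_*=i_*$), with $f\circ i = f|_D =: f_D$ and $N_{D/X}\otimes L|_D$ locally free on $D$. Here one should note that $(f_D\colon D\to S)\in\mathcal{F}^{n-1}$, since a relative Cartier divisor is flat over $S$ of pure relative dimension $n-1$ and remains projective; hence $\lambda_{D/S}(N_{D/X}\otimes L|_D)=\det R(f_D)_*\big(N_{D/X}\otimes L|_D\big)$ is exactly the line bundle on the right of the claimed isomorphism.

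The distinguished triangle produced by $Rf_*$ then gives, through the multiplicativity of $\det$ over distinguished triangles from \cite{KM77}, a canonical isomorphism
\[
\det Rf_*\big(\mathcal{O}_X(D)\otimes L\big)\cn \det Rf_*(L)\otimes \det Rf_*\big(i_*(N_{D/X}\otimes L|_D)\big),
\]
which rewrites, in the additive notation, as the asserted isomorphism. Since each ingredient---the fundamental sequence, the projection formula, and the determinant of the triangle---is canonical, the resulting isomorphism is functorial; functoriality under a base change $g\colon S'\to S$ follows from the compatibility of the Knudsen--Mumford determinant with base change, the three pushforwards being perfect. The one step I expect to require genuine care, and hence the main technical obstacle, is precisely this base-change compatibility: one must verify that forming $Rf_*$ of the exact sequence commutes with pulling back along $g$ so that the isomorphism is preserved, tracking the identification $Rf_*i_*=R(f_D)_*$ through the base change; the rest of the argument is formal.
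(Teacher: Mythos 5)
Your proposal is correct and follows essentially the same route as the paper: the paper's proof is exactly the short exact sequence $0\to L\to L\otimes\mathcal{O}_X(D)\to \mathcal{O}_D(D)\otimes L|_D\to 0$ (which is your tensored fundamental sequence, with $\mathcal{O}_D(D)=N_{D/X}$), followed by taking higher direct images and invoking determinant multiplicativity. You have merely filled in the details the paper leaves implicit (perfectness, $Rf_*i_*=R(f_D)_*$, and base-change compatibility of the Knudsen--Mumford determinant), all of which are the right points to check.
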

\begin{proof}
Recall that we have the following exact sequence of sheaves on $X$,
\[
0\rightarrow  L \rightarrow L\otimes\mathcal{O}_X(D) \rightarrow \mathcal{O}_D(D)\otimes L|_D \rightarrow 0.
\]
This induces a long exact sequence of higher direct images. The desired isomorphism follows. 
\end{proof}
\begin{theorem}\label{thm:lambdarelation}
Let $(f:X\rightarrow S)\in \mathcal{F}^n$.

Let $(c_i)_{i=0,\dots,N}$ be integers. Then the following line bundle is canonically trivial
\begin{equation}\label{eq:sumlam}
\sum_{i=0}^N c_i \lambda(iL)
\end{equation}
for all line bundles $L$ on $X$
if the following relations holds
\begin{equation}\label{eq:ciident}
\sum_{i=0}^N c_i \binom{i}{j}=0,\text{ for } 0\leq j\leq n+1.
\end{equation}
\end{theorem}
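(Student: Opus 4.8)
The plan is to argue by induction on the relative dimension $n$, proving the slightly stronger statement that $\sum_i c_i\,\lambda_{X/S}(iL+M)$ is canonically and functorially trivial for \emph{any} auxiliary line bundle $M$ on $X$ (the stated theorem being the case $M=\mathcal{O}_X$); the extra shift $M$ is forced on us by the ampleness reduction below and is preserved by the inductive mechanism. For $n=0$ the morphism $f$ is finite (as in Lemma~\ref{lma:codim0lambda}), and writing $\lambda(iL+M)\cn N(iL)+N(M)+\lambda(\mathcal{O}_X)$ via the norm together with its additivity (Proposition~\ref{prop:addN}, giving $N(iL)\cn iN(L)$) turns the sum into $(\sum_i c_i i)\,N(L)+(\sum_i c_i)(N(M)+\lambda(\mathcal{O}_X))$, which is canonically trivial by the relations \eqref{eq:ciident} for $j=0,1$. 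This base case needs no ampleness.

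For the inductive step I would first treat the case where $L$ is sufficiently ample (Definition~\ref{def:sufamp}), using the universal extension: on $\mathbb{P}=\mathbb{P}_S(E^\vee)$ the bundle $\mathcal{L}=\mathcal{O}_{\mathbb{P}}(1)\boxtimes L$ carries the canonical regular section $\sigma$ whose zero divisor $D$ is, away from the codimension-$\geq n+1$ locus $Z$ of Proposition~\ref{prop:outcod2}, flat of relative dimension $n-1$ over $U:=\mathbb{P}-Z$. After pulling $\sum_i c_i\lambda_{X/S}(iL+M)$ back to $\mathbb{P}$ by flat base change, I rewrite $\pi_2^*L=\mathcal{L}-\pi_1^*\mathcal{O}_{\mathbb{P}}(1)$ and invoke the determinant-of-cohomology projection formula $\lambda(\,\cdot\,+\pi_1^*N)\cn\lambda(\,\cdot\,)+\chi_i\cdot N$, where $\chi_i=\chi(X_s,(iL+M)|_{X_s})$ is the fibrewise Euler characteristic, a polynomial of degree $\leq n$ in $i$. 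The resulting $\mathcal{O}_{\mathbb{P}}(1)$-twist has coefficient $\sum_i c_i\, i\,\chi_i$, a pairing of the $c_i$ against a polynomial of degree $\leq n+1$, hence zero by \eqref{eq:ciident}. This reduces us to trivializing $\sum_i c_i\lambda_{\mathbb{P}\times_S X/\mathbb{P}}(i\mathcal{L}+\pi_2^*M)$. Over $U$, Lemma~\ref{lma:lambdaind} applied to $D$ (with $N_{D/X}=\mathcal{L}|_D$) telescopes $\lambda(i\mathcal{L}+\cdots)-\lambda((i-1)\mathcal{L}+\cdots)$ into $\lambda_{D/U}(i\mathcal{L}|_D+\pi_2^*M|_D)$, so the whole sum becomes $\sum_{k\geq 1}d_k\,\lambda_{D/U}(k\mathcal{L}|_D+\pi_2^*M|_D)$ with $d_k=\sum_{i\geq k}c_i$. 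A hockey-stick summation (using $\sum_{l=0}^{k}\binom{l}{j}=\binom{k+1}{j+1}$) shows the $d_k$ satisfy \eqref{eq:ciident} with $n$ replaced by $n-1$, which is exactly the hypothesis in relative dimension $n-1$; the inductive hypothesis then trivializes the sum over $U$. Since $(\mathbb{P},Z)$ is parafactorial (Corollary~\ref{cor:para}), the restriction $\PIC(\mathbb{P})\to\PIC(U)$ is an equivalence, so the trivialization extends canonically over $Z$, and Theorem~\ref{thm:desc} (full faithfulness of $\pi^*$) descends it to $S$.

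It remains to remove the ampleness hypothesis. Fixing $L$ and $M$, I choose a sufficiently ample $H$ with $L+tH$ sufficiently ample for all $t\geq 1$ and set $F(t):=\sum_i c_i\lambda(i(L+tH)+M)$. By the ample case already proved, $F(t)\cn 0$ for every $t\geq 1$. On the other hand, the $(n+2)$-fold finite difference of $F$ in $t$ is, term by term, a sum of expressions $\sum_s(-1)^{n+2-s}\binom{n+2}{s}\lambda(\text{shift}+s\cdot iH)$ with $iH$ sufficiently ample; these vanish by the sufficiently-ample case in relative dimension $n$ already established (the coefficients $(-1)^{n+2-s}\binom{n+2}{s}$ satisfy \eqref{eq:ciident}). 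Hence $F$ is polynomial of degree $\leq n+1$ in $t$ inside $\PIC(S)$, and a polynomial sequence of degree $\leq n+1$ vanishing at the $n+2$ values $t=1,\dots,n+2$ vanishes identically; evaluating at $t=0$ yields the desired trivialization of $\sum_i c_i\lambda(iL+M)$.

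The main obstacle is not any single computation but the demand that \emph{every} isomorphism be canonical and functorial in $L$: this is exactly why a naive choice of divisor of $L$ fails, and why the universal extension, which manufactures a \emph{canonical} regular section $\sigma$, is indispensable. Subsidiary care is needed to verify the combinatorial identities that propagate \eqref{eq:ciident} from dimension $n$ to $n-1$, and to handle the parafactoriality in the boundary case $n=1$, where Corollary~\ref{cor:para} requires $S$ factorial; I would dispose of this by reducing to a regular, hence locally factorial, base.
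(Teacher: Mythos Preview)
Your argument follows essentially the same skeleton as the paper's: induction on $n$, universal extension to produce a canonical divisor in the sufficiently ample case, telescoping via Lemma~\ref{lma:lambdaind}, and the hockey-stick identity showing that the new coefficients $d_k=\sum_{i\ge k}c_i$ satisfy the relations one dimension down. The combinatorics you write out is exactly the paper's computation $\sum_j\sum_{i\ge j}c_i\binom{j}{k}=\sum_i\bigl(\binom{i}{k+1}+\binom{i}{k}\bigr)c_i$.

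The genuine difference is in the passage from sufficiently ample $L$ to arbitrary $L$. The paper picks one sufficiently ample $L'=\mathcal{O}_X(D)$ with $L+L'$ sufficiently ample, asserts a direct isomorphism $\sum_i c_i\lambda(i(L+L'))\cn\sum_i c_i\lambda(iL)$ ``due to Lemma~\ref{lma:lambdaind} and (\ref{eq:ciident})'', and then checks independence from $L'$ via a commutative square. If one unpacks that direct isomorphism, the difference lives on $D$ as $\sum_i c_i\sum_{k=1}^i\lambda_D(iL|_D+kL'|_D)$, a genuinely two-parameter expression which the \emph{unshifted} inductive hypothesis does not immediately trivialize; one needs precisely the shifted statement you prove. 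So your strengthening to include $M$ is not gratuitous: it is one clean way to make rigorous what the paper leaves implicit. Your alternative reduction via the $(n+2)$-fold finite difference is correct (the coefficients $(-1)^{n+2-s}\binom{n+2}{s}$ do satisfy (\ref{eq:ciident}), and the ``polynomial of degree $\le n+1$'' language just encodes $\Delta^{n+2}F\cn 0$), and your explicit handling of the $\mathcal{O}_{\mathbb{P}}(1)$-twist via the projection formula for $\lambda$ spells out what the paper compresses into ``by universal extension we may assume $L=\mathcal{O}_X(D)$''.

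Two small points. First, you should still check that the resulting trivialization is independent of the auxiliary $H$; this is the analogue of the paper's commutative-square argument with $L'$ and $L''$, and goes through by comparing $H$, $H'$ with $H+H'$. Second, your remark about the $n=1$ parafactoriality issue is correct and applies equally to the paper's proof; noetherian reduction to a regular base (Proposition~\ref{prop:extnondp}) is the standard way to dispose of it.
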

This theorem seems to be a folklore result. It is known in a lot of literatures that $\lambda(iL)$ depends polynomially on $i$, see~\cite{KM77} for example, but it seems to the author that the exact relation \eqref{eq:ciident} has never been written down explicitly.
\begin{remark}
The inverse of this theorem is also true in the sense that (\ref{eq:sumlam}) is canonically trivial for all $f$ and all $L$ if{f} (\ref{eq:ciident}) holds.

According to the Mumford isomorphism, for a family of elliptic curves, $12\lambda(0)$ is always trivial.
In particular, the inverse of the theorem fails if we consider \emph{a single} family $f$ instead of all $f$.
\end{remark}
\begin{proof}
We make inductions on $n$. When $n=0$, this is a reformulation of Lemma~\ref{lma:codim0lambda}.

For $n>0$, first we deal with the case where $L$ is sufficiently ample.

By universal extension (Subsection~\ref{subsec:univext}) and the functoriality of $\lambda$, we may assume that $L=\mathcal{O}_X(D)$ for some given effective relative Cartier divisor $D$ on $X$. 
According to Lemma~\ref{lma:lambdaind}, there is a functorial isomorphism
\[
\sum_{i=0}^N c_i \lambda(iL) \cn \sum_{i=0}^N c_i \lambda(0) + \sum_{j=1}^N \sum_{i\geq j} c_i 
\lambda_D(j\tilde{L}|_{D}).
\]

This is canonically trivial for all $L$ if{f}
\begin{enumerate}
\item $\sum_i c_i=0$.
\item 
\[
 \sum_{j=1}^N \sum_{i\geq j} c_i 
\lambda_D(jL|_{D})
\]
is canonically trivial.
\end{enumerate}
By inductive hypothesis, the latter is true if{f}
\[
\sum_j \sum_{i\geq j}c_i \binom{j}{k}=0
\]
for any $0\leq k\leq n$.

Note that
\[
\sum_j \sum_{i\geq j}c_i \binom{j}{k}=\sum_i \sum_{j\leq i}\binom{j}{k} c_i=\sum_i \left( \binom{i}{k+1}+\binom{i}{k}\right)c_i
\]
The theorem follows in this case.

For a general $L$, assume that (\ref{eq:ciident}) holds. We have to prove that (\ref{eq:sumlam}) is canonically trivial.

Let $L'=\mathcal{O}_X(D)$ be a sufficiently ample line bundle such that $L+L'$ is also sufficiently ample. 
Then
\[
\sum_i c_i \lambda(iL+i\mathcal{O}_X(D)) \cn \sum_i c_i \lambda(iL)
\]
due to Lemma~\ref{lma:lambdaind} and (\ref{eq:ciident}).

By what we have already established, (\ref{eq:sumlam}) is trivial. We still have to show that this trivialization is independent of the choice of $L'$. Indeed, let $L''=\mathcal{O}(E)$ be another sufficiently ample line bundle on $X$, we have canonical isomorphisms
\[
\begin{tikzcd}
\sum_i c_i \lambda(iL+i\mathcal{O}_X(D)+i\mathcal{O}_X(E))\arrow[r,"\sim"]\arrow[d,"\sim"] &\sum_i c_i \lambda(iL+i\mathcal{O}_X(D))\arrow[d,"\sim"]\\
\sum_i c_i \lambda(iL+i\mathcal{O}_X(E)) \arrow[r,"\sim"] &\lambda(iL)
\end{tikzcd}
\]
This diagram commutes. (This can be seen by recalling the definitions of the isomorphisms in Lemma~\ref{lma:lambdaind}.) 

It follows that $L'+L''$ and $L'$ gives the same trivialization of (\ref{eq:sumlam}).
\end{proof}
\section{General theory of Deligne pairings}\label{sec:gen}
For the related concepts in Picard categories, we refer to Appendix~\ref{sec:appPic}.
\begin{definition}\label{def:DP}
A \emph{Deligne pairing} consists of the following data:
\begin{itemize}
\item For each $(f:X\rightarrow S)\in \mathcal{F}^n$, a functor 
\[
\langle\cdot\rangle_{X/S}=\langle\cdot\rangle_{f}\in L^{n+1}\left(\PIC(X),\PIC(S)\right).
\]
\item For each Cartesian square,
\begin{equation}\label{eq:Cart}
\begin{tikzcd}
X'  \arrow[d,"f'"] \arrow[r,"g'"]  & X  \arrow[d,"f"]\\
S' \arrow[r,"g"]  & S 
\end{tikzcd},
\end{equation}
where $f,f'\in \mathcal{F}^n$, we require a natural isomorphism between functors $\PIC(X)^{n+1}\rightarrow \PIC(S')$:
\[
\alpha_{f,g}:g^*\langle L_0,\dots,L_n \rangle_{X/S}\cn \langle g'^*L_0,\dots g'^*L_n \rangle_{X'/S'}.
\]
\item For each $(f:X\rightarrow S)\in \mathcal{F}^n$ for some $n>0$, a non-zero effective relative Cartier divisor $D$, a natural isomorphism between functors $\PIC(X)^{n} \rightarrow \PIC(S)$:
\[
\beta_{f,D}:\langle \mathcal{O}_X(D),L_1,\dots,L_n \rangle_{X/S}\cn \langle L_1|_{D},\dots, L_n|_{D} \rangle_{D/S}.
\]
\item
For each $(f:X\rightarrow S)\in \mathcal{F}^n$, a natural isomorphism between functors $\PIC(S)\times\PIC(X)^{n} \rightarrow \PIC(S)$:
\[
\gamma_{f}:\langle f^*L,L_1,\dots,L_n\rangle_{X/S}\cn L^{(L_1,\dots,L_n)},
\]
where $(L_1,\dots,L_n)$ is the intersection number of $L_1,\dots,L_n$ on any fibre. (See~\cite{Kol13} Appendix~2 for the precise definition.)
\item 
For $(f:X\rightarrow S)\in \mathcal{F}^0$, a natural isomorphism between functors $\PIC(X)\rightarrow \PIC(S)$
\[
\delta_f:\langle L \rangle_{X/S}\stackrel{\sim}{\rightarrow} N_{X/S}(L). 
\]
\end{itemize}

We require that $\beta,\gamma,\delta$ are natural with respect to base extension. 

To be precise: for a given Cartesian square (\ref{eq:Cart}), let $D$ be a non-zero effective relative Cartier divisor, $D'$ be its base extension to a divisor on $X'$ (which has the same properties by \cite{stacks-project} Tag~056P), such that
\[
\alpha_{f|_{D},g}^{-1}\circ \beta_{f',D'}\circ g'^{*}=g^*\circ \beta_{f,D}.
\]
And similar identities hold for $\gamma$ and $\delta$.
\end{definition}
\begin{remark}\label{rmk:nDP}
When in this definition, $\mathcal{F}^n$ is replaced by $\mathcal{G}^n$, namely, the class of $(f:X\rightarrow S)\in \mathcal{F}^n$, such that $S$ is locally noetherian, we say we have a noetherian Deligne pairing. We shall prove later on that each noetherian Deligne pairing extends automatically to a Deligne pairing. (Proposition~\ref{prop:extnondp})
\end{remark}

\begin{remark}
When a good intersection theory on $X$ and $S$ is defined, the Deligne pairing indeed represents the push-forward of intersection products. More precisely,

The following diagram is commutative:
\begin{equation}
\begin{tikzcd}
\PIC(X)^{n+1}/\mathfrak{S}_{n+1} \arrow[r, "\langle\cdot \rangle_{X/S}"] \arrow[d,"c_1"]  & \PIC(S)\arrow[d,"c_1"] \\
\CH^1(X)^{n+1}/\mathfrak{S}_{n+1} \arrow[r,"\int"]  & \CH^1(S)
\end{tikzcd},
\end{equation}
where $\int$ denotes the push-forward of the product in Chow ring.

This is the case for example when $X$, $Y$ are algebraic schemes over a fixed field and $X$ is geometrically regular or simply when $X$ is regular, in which case, we actually have to use the rational Chow groups instead.

This fact follows from the $\beta$ isomorphism, additivity of Deligne pairings and Lelong-Poincar\'{e} formula.
\end{remark}

\subsection{Uniqueness}
\begin{theorem}\label{thm:uniqueness}
The Deligne pairing is unique in the following sense: Given Degline pairings say $(\langle\cdot \rangle^i,\alpha^i,\beta^i,\gamma^i,\delta^i)$ ($i=1,2$), there is a unique additive natural isomorphism $F:\langle\cdot \rangle^1 \rightarrow \langle\cdot \rangle^2$ that transforms $\alpha$, $\beta$, $\gamma$, $\delta$ accordingly.
\end{theorem}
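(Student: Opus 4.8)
The plan is to construct $F$ and establish its uniqueness simultaneously, by induction on the relative dimension $n$, using the method of universal extension (Subsection~\ref{subsec:univext}) to replace the base-dependent $\beta$-isomorphisms by a single \emph{canonical} divisor after a projective-bundle base change. The guiding idea is that the naive attempt to define $F$ via $\beta$ and a chosen divisor fails precisely because it depends on the choice; the universal divisor removes this dependence.

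First I would treat the base case $n=0$. Here both pairings are pinned down by the $\delta$-isomorphisms to the norm functor, so any $F$ compatible with $\delta^1,\delta^2$ must equal $(\delta^2)^{-1}\circ\delta^1$; conversely this composite is an additive natural isomorphism, its additivity being inherited from that of $N_{X/S}$ (Proposition~\ref{prop:addN}). This settles both existence and uniqueness in relative dimension $0$.

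For the inductive step I assume the theorem in relative dimension $n-1$ and fix $(f:X\rightarrow S)\in\mathcal{F}^n$. Since each $\langle\cdot\rangle^i$ is multilinear and symmetric and every line bundle is a difference of sufficiently ample ones (Definition~\ref{def:sufamp}), additivity reduces the determination of $F$ on $\langle L_0,\dots,L_n\rangle$ to the case where $L_0$ is sufficiently ample. I then apply the universal extension to $L_0$: over $\mathbb{P}=\mathbb{P}_S(E^\vee)$ with $U=\mathbb{P}\times_S X$ and $\pi:\mathbb{P}\rightarrow S$, the line bundle $\mathcal{L}=\pi_1^*\mathcal{O}_{\mathbb{P}}(1)\otimes\pi_2^*L_0$ carries the canonical regular section $\sigma$, whose zero locus is, away from a closed $Z\subset\mathbb{P}$ of codimension $\geq n+1$, a relative divisor $D_\sigma$ with $D_\sigma/(\mathbb{P}-Z)\in\mathcal{F}^{n-1}$ (Proposition~\ref{prop:outcod2}). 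On $U/\mathbb{P}$ I split off the base factor by multilinearity,
\[
\langle\mathcal{L},\pi_2^*L_1,\dots,\pi_2^*L_n\rangle^i \cn \langle\pi_1^*\mathcal{O}_{\mathbb{P}}(1),\pi_2^*L_1,\dots,\pi_2^*L_n\rangle^i \otimes \langle\pi_2^*L_0,\pi_2^*L_1,\dots,\pi_2^*L_n\rangle^i,
\]
and I identify the two tensor factors using $\gamma^i$ and $\alpha^i$ respectively: the first becomes $\mathcal{O}_{\mathbb{P}}(1)^{(L_1,\dots,L_n)}$, on which $\gamma$ forces $F$ to be the identity, while the second becomes $\pi^*\langle L_0,\dots,L_n\rangle^i_{X/S}$. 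On the other hand, restricting to $\mathbb{P}-Z$ and applying $\beta^i$ to the canonical divisor $D_\sigma$ expresses $\langle\mathcal{L},\pi_2^*L_1,\dots,\pi_2^*L_n\rangle^i$ in terms of a relative-dimension-$(n-1)$ pairing, on which $F$ is already uniquely determined by the inductive hypothesis. Matching these two descriptions pins down $F$ on $\pi^*\langle L_0,\dots,L_n\rangle^i$ over $\mathbb{P}-Z$. It then descends to $S$: by Corollary~\ref{cor:para} the pair $(\mathbb{P},Z)$ is parafactorial when $n\geq 2$, so the isomorphism extends uniquely over $\mathbb{P}$, and by the descent equivalence $\PIC(\mathbb{P})\cn\PIC(S)\times\mathbb{Z}$ (Theorem~\ref{thm:desc}) the functor $\pi^*$ is fully faithful, so it descends to a unique isomorphism $F$ on $\langle L_0,\dots,L_n\rangle^i_{X/S}$. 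This defines $F$ and shows any compatible $F$ must coincide with it.

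The main obstacle is the final bundle of coherence checks. One must verify that $F$ so defined is independent of all auxiliary data in the universal extension (the bundle $E$ and the surjection $g$, hence $\mathbb{P}$), is natural and additive in the $L_i$, and is genuinely compatible with $\beta$ for \emph{every} effective relative Cartier divisor, not merely the universal one. Independence of the extension data I would handle by comparing two extensions through the product $\mathbb{P}_1\times_S\mathbb{P}_2$ of projective bundles, using the regular-sequence version of Proposition~\ref{prop:outcod2}; the comparison with an arbitrary divisor $D$ is exactly the point where the naive induction breaks down, and it is resolved precisely because the universal divisor is canonical and hence functorial. Finally, the residual case $n=1$ with $S$ not factorial, where parafactoriality is unavailable, I would dispatch by a faithfully flat base change to a factorial base, so that equalities of morphisms of line bundles on $S$ may be tested after pullback, together with the base-change naturality of $\alpha,\beta,\gamma,\delta$ built into Definition~\ref{def:DP}.
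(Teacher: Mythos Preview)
Your argument follows the same strategy as the paper's: induction on $n$, the base case handled by $\delta$, and the inductive step carried out via the universal extension, with the $\mathcal{O}_{\mathbb{P}}(1)$ twist peeled off by $\gamma$, the remainder identified with the pullback by $\alpha$, and the reduction to relative dimension $n-1$ effected by $\beta$ applied to the canonical divisor $Z(\sigma)$. The additivity/independence check is also essentially the same, though the paper carries it out through the Segre-type embedding $\mathbb{P}^1\times_S\mathbb{P}^2\hookrightarrow \mathbb{P}\bigl((f_*L_0^1)^\vee\otimes(f_*L_0^2)^\vee\bigr)$ and the identity $Z(\sigma)=Z(\sigma^1)+Z(\sigma^2)$ rather than via the product bundle and the regular-sequence form of Proposition~\ref{prop:outcod2}; either route works.

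There is one organizational difference worth flagging. You descend in two stages: first invoke parafactoriality of $(\mathbb{P},Z)$ to extend across $Z$, then use Theorem~\ref{thm:desc} to descend along $\pi$. This forces the case split at $n=1$, and your proposed remedy there---a faithfully flat base change to a factorial base---is not available in general, since an arbitrary locally noetherian $S$ need not admit such a cover. The paper sidesteps this entirely: because both line bundles in question are already of the form $\pi^*(\text{something on }S)$, an isomorphism between them over $\mathbb{P}-Z$ is a global unit on $\mathbb{P}-Z$, and one checks directly (using that $Z$ has fibrewise codimension $\geq n+1\geq 2$ in a projective space, hence extends over $Z$ by normality of the fibres) that such a unit comes from $S$. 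In other words, the functor $\pi|_{\mathbb{P}-Z}^*:\PIC(S)\to\PIC(\mathbb{P}-Z)$ is fully faithful without any factoriality hypothesis on $S$ and without invoking parafactoriality of the pair. Replacing your two-step descent by this single observation removes the spurious case distinction and closes the gap.
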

\begin{proof}
Let $(f:X\rightarrow S)\in \mathcal{F}^n$. 
Let $L_0,\dots,L_n\in \PIC(X)$. We want to construct in a functorial way a symmetric additive isomorphism
\[
F^X_{L_0,\dots,L_n}:\langle L_0,\dots,L_n \rangle^1_{X/S}\rightarrow \langle L_0,\dots,L_n \rangle^2_{X/S}.
\]

We make an induction on $n$. 

When $n=0$, this is trivially done using $\delta$.

When $n>0$, first we consider the case where $L_0$ is sufficiently ample.

Consider the universal construction(Subsection~\ref{subsec:univext}):
\begin{equation}
\begin{tikzcd}
V:=\mathbb{P}\times_S X \arrow[d,"\pi_1"]\arrow[r,"\pi_2"] & X \arrow[d,"f"] \\
\mathbb{P} \arrow[r,"\pi"] &S
\end{tikzcd}
\end{equation}
Here $\mathbb{P}=\mathbb{P}_S(f_*L_0)^\vee$. We know that there is a canonical line bundle $\mathcal{L}$ on $V$ together with a canonical section $\sigma$, which is flat relative to $\pi_1$ outside a codimension $2$ subset $Z\subset \mathbb{P}$. (Proposition~\ref{prop:outcod2}). We shall write $Z'(\sigma)=Z(\sigma)-\pi^{-1}_1(Z)$.
 
An isomorphism between line bundles $\langle L_0,\dots,L_n\rangle_{X/S}^i$ is equivalent to an isomorphism between
 $\pi|_{\mathbb{P}-Z}^*\langle L_0,\dots,L_n\rangle_{X/S}^i$
 by \cite{EGAIV} Lemme~21.13.2.

Applying $\alpha$ isomorphisms, this is further equivalent to an isomorphism between
\[
\langle \pi'^{-1}L_0,\dots,\pi'^{-1}L_n\rangle_{V-\pi_1^{-1}(Z)/\mathbb{P}-Z}^i,
\]
where $\pi'$ is the restriction of $\pi_2$ to $V-\pi_1^{-1}(Z)$.

Using the additivity morphism and the $\gamma$ isomorphism, this is further equivalent to an isomorphism between
\[
\langle \mathcal{L},\pi'^*L_1,\dots, \pi'^*L_n \rangle_{V-\pi_1^{-1}(Z)/\mathbb{P}-Z}^i.
\]

Using $\beta$ isomorphism, this is finially equivalent to an isomorphism between
\[
\langle \pi'^*L_1,\dots, \pi'^*L_n \rangle_{Z'(\sigma)}^i,
\]
where $\sigma$ is the universal section of $\mathcal{L}$, which is uniquely determined by induction.

Now for general $L_0$, take a sufficiently ample line bundle $H$ so that $L_0+H$ is also sufficiently ample.
We then define
\[
F^X_{L_0,L_1,\dots,L_n}
\]
as the difference of $F^X_{L_0+H,L_1,\dots,L_n}$ and $F^X_{H,L_1,\dots,L_n}$.

This construction is independent of the choice of $H$. To see this, it suffices to show $F^X_{L_0,\dots,L_n}$ is additive with respect to sufficiently ample $L_0$.

So now we assume that $L_0^1$ and $L_0^2$ are sufficiently ample line bundles on $X$. 
Consider the universal constructions:
\begin{equation}
\begin{tikzcd}
V^j:=\mathbb{P}^j\times_S X \arrow[d,"\pi_1"]\arrow[r,"\pi_2"] & X \arrow[d,"f"] \\
\mathbb{P}^j \arrow[r,"\pi"] &S
\end{tikzcd},
\end{equation}
where $\mathbb{P}^j=\mathbb{P}(f_*L_0^j)^\vee$. We write $\mathcal{L}^j$ and $\sigma^j$ for the universal bundles and universal sections.
Define $L_0=L_0^1+L_0^2$.

We also have another universal construction
\begin{equation}
\begin{tikzcd}
V:=\mathbb{P}\times_S X \arrow[d,"\pi_1"]\arrow[r,"\pi_2"] & X \arrow[d,"f"] \\
\mathbb{P} \arrow[r,"\pi"] &S
\end{tikzcd},
\end{equation}
where 
\[
\mathbb{P}=\mathbb{P}\left((f_*L_0^1)^\vee \otimes (f_*L_0^2)^\vee\right).
\]
We write $\mathcal{L}$ and $\sigma$ for the universal bundles and universal sections.

Recall that there is a natural embedding
\[
i:\mathbb{P}^1\times \mathbb{P}^2\hookrightarrow \mathbb{P}.
\]
We write $i$ also for the corresponding map of $V$'s.

We find
\[
i^*\mathcal{L}=\mathcal{L}^1\boxtimes \mathcal{L}^2.
\]
By definition, we find that
\[
i^*\sigma=\sigma^1\boxtimes \sigma^2.
\]
Hence
\begin{equation}\label{eq:Zaddi}
Z(\sigma)=Z(\sigma^1)+Z(\sigma^2)
\end{equation}
as divisors on $X\times_S (\mathbb{P}^1\times_S \mathbb{P}^2)$, where obvious pull-back maps are omitted. We conclude by \cite{Elk89} I.2.9 and I.2.4.

The behaviors of $\alpha$, $\beta$, $\gamma$ are easy to see. We prove this for $\alpha$ for example. Notations are as in (\ref{eq:Cart}).
In this case, we need a commutative diagram as
\[
\begin{tikzcd}
g^*\langle L_0,\dots,L_n\rangle_{X/S}^1 \arrow[r,"\sim"]\arrow[d,"\sim"] &\langle g'^*L_0,\dots,g'^*L_n\rangle_{X'/S'}^1 \arrow[d,"\sim"] \\
g^*\langle L_0,\dots,L_n\rangle_{X/S}^2 \arrow[r,"\sim"] & \langle g'^*L_0,\dots,g'^*L_n\rangle_{X'/S'}^2
\end{tikzcd}.
\]
Using the same method as above, we may assume $L_0=\mathcal{O}(D)$ for some non-zero effective Cartier divisor on $X$. According to the base change invariance of $\beta$, we reduce immediately to $(n-1)$ dimensional case. So finally, we are left with the case $n=0$, which amounts to the base change invariance of $\delta$.
\end{proof}

We conclude a very important principle from this proof:
\begin{principle}\label{prin:verif}
In order to verify a universal functorial relation concerning Deligne pairings of line bundles, we may assume some of the line bundles under consideration are sufficiently ample and have the form $\mathcal{O}(D)$ for some non-zero effective relative Cartier divisors $D$ that are prescribed \emph{a priori}.
\end{principle}
In practice, we find that it is much easier to apply this principle instead of applying the uniqueness theorem itself.
\subsection{Existence}\label{subsec:exis}
\begin{theorem}
Deligne paring exists.
\end{theorem}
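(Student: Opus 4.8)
The plan is to construct the Deligne pairing by induction on the relative dimension $n$, using the universal extension as the engine that reduces dimension $n$ to dimension $n-1$. For $n=0$ I would simply \emph{define} $\langle L\rangle_{X/S}:=N_{X/S}(L)$, taking $\delta_f$ to be the identity; the additivity and base-change compatibilities are then exactly Proposition~\ref{prop:addN} and Corollary~\ref{cor:cartnorm}. The real content is the inductive step, and here the guiding idea is Principle~\ref{prin:verif}: every construction and every compatibility should first be pinned down on the locus where the relevant line bundle has a regular section, and then transported back by parafactorial descent.

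For the inductive step, suppose the pairing $\langle\cdot\rangle$ together with $\alpha,\beta,\gamma,\delta$ has been built in all relative dimensions $<n$. Given $(f:X\to S)\in\mathcal{F}^n$ and $L_0,\dots,L_n$, I would first treat the case where $L_0$ is sufficiently ample (Definition~\ref{def:sufamp}). Form the universal extension $\mathbb{P}=\mathbb{P}_S(f_*L_0)^\vee$ with $V=\mathbb{P}\times_S X$, the universal bundle $\mathcal{L}=\mathcal{O}_{\mathbb{P}}(1)\boxtimes L_0$, and its universal section $\sigma$. By Proposition~\ref{prop:outcod2} the section $\sigma$ is regular and $Z(\sigma)$ is flat of relative dimension $n-1$ over $\mathbb{P}-Z$ for a closed $Z\subset\mathbb{P}$ of codimension $\geq n+1\geq 2$. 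On $\mathbb{P}-Z$ I can therefore \emph{define}
\[
\langle \mathcal{L},\pi'^*L_1,\dots,\pi'^*L_n\rangle_{V-\pi_1^{-1}(Z)/\mathbb{P}-Z}:=\langle \pi'^*L_1|_{Z'(\sigma)},\dots,\pi'^*L_n|_{Z'(\sigma)}\rangle_{Z'(\sigma)/\mathbb{P}-Z},
\]
the right-hand side being the already-constructed dimension-$(n-1)$ pairing (this simultaneously forces the $\beta$-isomorphism). Splitting off the $\mathcal{O}_{\mathbb{P}}(1)$ factor by the intended $\gamma$-isomorphism and descending from $\mathbb{P}-Z$ to $\mathbb{P}$ via Corollary~\ref{cor:para} and Theorem~\ref{thm:desc}, then descending from $\mathbb{P}$ to $S$ along $\pi$, produces $\langle L_0,\dots,L_n\rangle_{X/S}$. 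For general $L_0$ I would write $L_0=(L_0+H)-H$ for $H$ sufficiently ample with $L_0+H$ sufficiently ample, and define the pairing as the difference of the two sufficiently ample values; independence of $H$ is exactly the additivity statement verified in the uniqueness proof via \eqref{eq:Zaddi} and \cite{Elk89}.

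Having defined the functor, I would then construct $\alpha,\beta,\gamma$ and check all axioms and compatibilities. The symmetry and multilinearity, the base-change isomorphism $\alpha$, and the compatibility of $\beta,\gamma,\delta$ with base change all reduce, by Principle~\ref{prin:verif}, to the locus where the relevant bundles are $\mathcal{O}(D)$ with $D$ regular, and there they follow by induction from the corresponding dimension-$(n-1)$ statements; the base case $n=0$ is handled by Proposition~\ref{prop:addN}, Corollary~\ref{cor:cartnorm}, and Proposition~\ref{prop:normpull} (the last giving $\gamma$ through $N_{X/S}(f^*L)\cong (\deg f) L$). The genuinely laborious verification is associativity/coherence: one must check that the many reconstructions agree on overlaps — for instance that reducing by a divisor $D_0$ then by $D_1$ yields the same isomorphism as reducing in the opposite order, which is where $Z(\sigma)=Z(\sigma^1)+Z(\sigma^2)$ and the commutativity diagrams of Lemma~\ref{lma:detaffine} and Proposition~\ref{prop:addN} are used repeatedly.

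The main obstacle I anticipate is precisely this coherence bookkeeping under descent: the universal extension introduces an auxiliary $\mathbb{P}$ and an excluded locus $Z$, so every isomorphism is really defined on $\mathbb{P}-Z$ and then descended twice, and one must verify that the cocycle/pentagon-type diagrams commute \emph{after} these descents and are independent of all the auxiliary choices ($E$ with $f^*E\twoheadrightarrow L_0$, the ample twist $H$, and the ordering of divisors). The parafactoriality package of Proposition~\ref{prop:para} and the equivalence of Theorem~\ref{thm:desc} guarantee that it suffices to check commutativity on $\mathbb{P}-Z$ before descending, which is what makes the argument tractable; nonetheless, assembling these diagrams into a consistent collection of Picard-categorical natural isomorphisms, rather than mere isomorphisms of line bundles, is the delicate part of the proof.
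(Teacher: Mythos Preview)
Your approach is valid but takes a genuinely different route from the paper's. You propose to build the pairing by \emph{induction on $n$}, using a single universal extension for $L_0$ to drop to the dimension-$(n-1)$ pairing already constructed. The paper instead follows Elkik: it assumes $L_1,\dots,L_n$ (not $L_0$) are all sufficiently ample, forms the \emph{product} $\mathbb{P}=\prod_{i=1}^n\mathbb{P}(f_*L_i)^\vee$ with its regular sequence $(\sigma_1,\dots,\sigma_n)$, and defines $\langle L_0,\dots,L_n\rangle_{X/S}$ \emph{in one shot} as the descendent of $N_{\bar Z/U}(L_0)$, where $\bar Z=\bigcap_i Z(\sigma_i)$ is finite over $U\subset\mathbb{P}$. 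This buys two things. First, the symmetry among $L_1,\dots,L_n$ is automatic from the product construction, so only the $L_0\leftrightarrow L_i$ swap needs an explicit argument---supplied by the $3\times 3$ diagram of determinant bundles in Step~1(3), together with the sign correction $(-1)^{r_0r_1}$. Second, all coherence checks live in relative dimension~$0$, where Lemma~\ref{lma:detaffine} and Proposition~\ref{prop:addN} settle them directly rather than via an inductive hypothesis. Your route is more uniform with the uniqueness proof and would work, but the coherence bookkeeping you yourself flag is genuinely heavier: each symmetry and compatibility must be manufactured in dimension $n$ from dimension-$(n-1)$ data and shown to descend consistently through two layers ($\mathbb{P}-Z\to\mathbb{P}\to S$).

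Two small caveats. Do not invoke Principle~\ref{prin:verif} literally to verify the axioms: that principle was extracted from the uniqueness theorem, which presupposes a Deligne pairing already exists, so citing it for existence is circular---what you actually need is the underlying universal-extension technique, applied by hand. And you have omitted the noetherian reduction: the paper first constructs a noetherian Deligne pairing and then extends to arbitrary $S$ via the limit arguments of Proposition~\ref{prop:extnondp}; this matters because the parafactoriality results (Proposition~\ref{prop:para}, Corollary~\ref{cor:para}) you rely on require $S$ locally noetherian.
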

This has been proved in some literatures, like \cite{Elk89}, \cite{Duc05}, \cite{Gar00}. 

We reproduce a proof following \cite{Elk89} for the convenience of readers.

Before starting the proof, we shall remark on the difference between the construction here and that in
\cite{Elk89}. 

1. In \cite{Elk89}, Elkik assume that $f$ is Cohen-Macaulay, which is not necessary. She used this for the proof of Lemme I.1.3. Following \cite{Gar00}, we do not need that lemma, we use Proposition~\ref{prop:outcod2} instead.

2. We establish Deligne pairings also for non-noetherian schemes, this is a simple application of the limit procedure developed in \cite{EGAIV} Section~8 and Section~11.
Although quite simple, it seems that these constructions have not been written down in the existing literatures.

Our proof proceeds in two steps,
the first step is to establish a noetherian Deligne pairing (Remark~\ref{rmk:nDP}).

\begin{lemma}
Noetherian Deligne pairing exists.
\end{lemma}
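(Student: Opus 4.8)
The plan is to construct the pairing by induction on the relative dimension $n$, following \cite{Elk89}, with the universal extension of Subsection~\ref{subsec:univext} and the descent results of Subsection~\ref{subsec:desclb} as the two main tools. For the base case $n=0$ I would set $\langle L\rangle_{X/S}:=N_{X/S}(L)$. Its additivity, hence that it defines an object of $L^{1}(\PIC(X),\PIC(S))$, is Proposition~\ref{prop:addN}; the base-change isomorphism $\alpha$ is Corollary~\ref{cor:cartnorm}; the projection isomorphism $\gamma$ is Proposition~\ref{prop:normpull}; and $\delta$ is the identity. There is nothing to impose for $\beta$, and all the coherence diagrams reduce to instances of Lemma~\ref{lma:detaffine}.

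For the inductive step, fix $(f:X\to S)\in\mathcal{G}^n$ with $n>0$ and assume that pairings of relative dimension $<n$ have been constructed together with all their structural isomorphisms. I would first define $\langle L_0,\dots,L_n\rangle_{X/S}$ when $L_0$ is sufficiently ample. Form the canonical universal extension (taking $E=(f_*L_0)^\vee$), with projections $\pi_1,\pi_2$ on $V=\mathbb{P}\times_S X$, the line bundle $\mathcal{L}=\mathcal{O}_\mathbb{P}(1)\boxtimes L_0$ and its canonical regular section $\sigma$. By Proposition~\ref{prop:outcod2} there is a closed subset $Z\subset\mathbb{P}$ of codimension $\geq n+1\geq 2$ with $\pi_1\colon Z'(\sigma):=Z(\sigma)-\pi_1^{-1}(Z)\to\mathbb{P}-Z$ in $\mathcal{F}^{n-1}$, so the inductive hypothesis produces a line bundle
\[
M_0:=\langle \pi_2^*L_1|_{Z'(\sigma)},\dots,\pi_2^*L_n|_{Z'(\sigma)}\rangle_{Z'(\sigma)/(\mathbb{P}-Z)}
\]
on $\mathbb{P}-Z$. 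Writing $N:=(L_1\cdots L_n)$ for the fibrewise intersection number, I would check on the fibres $\mathbb{P}_s$ that $M_0$ has fibrewise degree $N$, so that $M_0\otimes\mathcal{O}_\mathbb{P}(-N)$ becomes fibrewise trivial; this degree count is exactly the $\gamma$-contribution of the $\mathcal{O}_\mathbb{P}(1)$-factor of $\mathcal{L}$, read off the uniqueness computation backwards. By parafactoriality of $(\mathbb{P},Z)$ (Corollary~\ref{cor:para}) the bundle $M_0\otimes\mathcal{O}_\mathbb{P}(-N)$ extends uniquely to a line bundle $M$ on all of $\mathbb{P}$, and by Theorem~\ref{thm:desc} $M$ is pulled back from $S$; I define $\langle L_0,\dots,L_n\rangle_{X/S}:=\pi_*M$.

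To pass to an arbitrary $L_0$ I would choose a sufficiently ample $H$ with $L_0+H$ sufficiently ample and put $\langle L_0,\dots\rangle:=\langle L_0+H,\dots\rangle-\langle H,\dots\rangle$. Independence of $H$ reduces to additivity of the pairing in a sufficiently ample first slot, which I would establish exactly as in the proof of Theorem~\ref{thm:uniqueness}, from the identity $Z(\sigma^1\boxtimes\sigma^2)=Z(\sigma^1)+Z(\sigma^2)$ of (\ref{eq:Zaddi}) together with \cite{Elk89} I.2.4 and I.2.9. It then remains to promote the construction to a symmetric multilinear functor and to build $\alpha$, $\beta$, $\gamma$ (with $\delta$ vacuous for $n>0$) and verify their base-change compatibility: $\beta$ for the universal divisor is built into the definition and extends to a general flat $D$ by functoriality, while $\gamma$ and $\alpha$ descend from the corresponding isomorphisms of $M_0$ in relative dimension $n-1$.

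The main obstacle I expect is coherence rather than construction. The definition artificially singles out the first slot, so symmetry between $L_0$ and $L_i$ must be proved separately (as in \cite{Elk89}, ultimately via a reciprocity on the universal divisor), and one must check that $\alpha$, $\beta$, $\gamma$ satisfy all the compatibility diagrams of Appendix~\ref{sec:appPic}; this bookkeeping is the technical heart of the argument. A secondary point is that Corollary~\ref{cor:para} only yields parafactoriality of $(\mathbb{P},Z)$ when $n\geq 2$ (where Proposition~\ref{prop:para}.6 applies, the fibres of $\pi$ having depth $\geq 3$ along $Z$) or when $S$ is factorial; the case $n=1$ over a general noetherian base I would treat separately, either by invoking Deligne's classical construction for relative curves \cite{Del87} as the genuine base of the induction, or by first constructing the pairing after base change to a regular, hence locally factorial, universal situation and then transporting it to $S$ by functoriality.
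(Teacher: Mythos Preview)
Your proposal is correct in outline but takes a genuinely different route from the paper's proof.

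The paper does \emph{not} proceed by induction on $n$ using a single universal extension for $L_0$. Instead, it assumes that \emph{all} of $L_1,\dots,L_n$ (not $L_0$) are sufficiently ample, forms the full product $\mathbb{P}=\prod_{i=1}^{n}\mathbb{P}(f_*L_i)^\vee$, and uses the $n$ universal sections $\sigma_1,\dots,\sigma_n$ simultaneously to cut out a subscheme $\bar Z=\bigcap_i Z(\sigma_i)$ which is \emph{finite} over an open $U\subset\mathbb{P}$ of codimension $\geq 2$ complement. The pairing is then defined in one stroke as the descent of the norm $N_{\bar Z/U}(L_0)$; the only ``known'' ingredient needed is the relative-dimension-$0$ norm, not a previously constructed $(n-1)$-fold pairing. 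Symmetry among $L_1,\dots,L_n$ is then manifest from the construction, and only the interchange of $L_0$ with some $L_i$ requires a separate argument, which the paper reduces to $n=1$ and settles by an explicit $3\times 3$ diagram of determinant line bundles (the reciprocity you allude to). Extension to general $L_i$ is by writing them as differences of sufficiently ample ones.

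Your inductive scheme---single universal extension for $L_0$, apply the already-built $(n-1)$-fold pairing on $Z'(\sigma)$, twist by $\mathcal{O}(-N)$, extend by parafactoriality, descend---is essentially the constructive analogue of the paper's \emph{uniqueness} proof (Theorem~\ref{thm:uniqueness}), and is closer in spirit to Ducrot's approach in \cite{Duc05}. It works, and its virtue is modularity: each step mirrors a structural isomorphism. The cost is exactly what you identify: coherence must be propagated through the induction (in the paper's version most of the symmetry is free), and the $n=1$ parafactoriality gap has to be patched separately---the paper handles the analogous descent issue by successive reduction and an appeal to \cite{Elk89} Lemme~I.2.8 for the last step, which is morally the same as your proposed fix.
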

\begin{proof}
Let $(f:X\rightarrow S) \in \mathcal{G}^n$. 

Let $L_0,\dots,L_n$ be line bundles on $X$. We proceed in several steps.

Step 1. Construction of $\langle L_0,\dots,L_n \rangle_{X/S}$ under the assumption that $L_i$'s are sufficiently ample for $i\neq 0$. In that case, we set
\[
\mathbb{P}=\prod_{i=1}^n \mathbb{P}_i,
\]
where
\[
\mathbb{P}_i=\mathbb{P}(f_*L_i)^\vee.
\]
Let $\mathcal{L}_i$, $\sigma_i$ be the corresponding universal bundles and universal sections(See Subsection~\ref{subsec:univext}). 
It follows from \cite{Elk89} Theorem~I.2.4 that there is a closed subset $Z\subset \mathbb{P}$ of codimension  $2$, such that 
$\bar{Z}=\cap_i Z(\sigma_i)$ is finite over $U:=\mathbb{P}-Z$. 
Here and in the sequel, obvious pull-back maps are always omitted.

\begin{equation}
\begin{tikzcd}
L_0 \arrow[d,dashed] &\mathcal{L} \arrow[d,dashed]
& \mathcal{L} \arrow[d,dashed]&\mathcal{L}_i \arrow[d,dashed]& L_0,\dots,L_n \arrow[d,dashed]\\
\bar{Z} \arrow[r] \arrow[d] & X\times_S U \arrow[r] \arrow[d] & X\times_S \mathbb{P} \arrow[r] \arrow[d] & X\times_S \mathbb{P}_i \arrow[r] \arrow[d] & X\arrow[d,"f"] \\
U \arrow[r,"\text{id}"]& U\arrow[r,hookrightarrow] &\mathbb{P} \arrow[r] & \mathbb{P}_i \arrow[r,"p_i"] &S
\end{tikzcd}
\end{equation}

We now define 
\[
\langle L_0,\dots,L_n \rangle_{X/S}
\]
as the descendent of $N_{\bar{Z}/U}(L_0)$ to $S$. To be precise, by successively descending to
\[
\mathbb{P}_1\times \dots \times \mathbb{P}_j 
\]
for $j=n-1,\dots,0$ using Proposition~\ref{prop:para} 6, we reduce immediately to the case $n=1$, which is established in \cite{Elk89} Lemme~I.2.8.

This definition makes $L_0$ special among $L_i$'s. We shall check in several steps that $L_0$ is symmetric with all other $L_i$'s.

1) Let $(s_i)_{i\in J}$ be a sequence of sections of $L_i$, where $J$ is a subset of $\{1,\dots,d\}$. Assume that $Y:=\cap Z(s_i)$ is flat over $S$. We construct a canonical isomorphism
\[
\beta:\langle L_0,L_1,\dots,L_n\rangle_{X/S} \rightarrow \langle L_0, \mathbb{L}\rangle_{Y/S},
\]
where $\mathbb{L}$ is the sequence obtained from $L_1,\dots,L_n$ by deleting those indexed by $J$.

We do so by assuming $J=\{1\}$, so we drop the subindex $i$. The general case is similar.
In this case, it suffices to observe that $s:\mathcal{O}_X \rightarrow L_1$ induces an $S$-morphism
$S\rightarrow \mathbb{P}_1$.
This further induces the following commutative diagram:
\[
\begin{tikzcd}
\bigcap_{i=2}^{n} Z(\sigma_2) \arrow[r]\arrow[d] &\bar{Z}\arrow[d]\\
\mathbb{P}_2\times_S \cdots\times_S \mathbb{P}_n \arrow[r,"i"] &\mathbb{P}
\end{tikzcd}
\]
It is immediate that this diagram is Cartesian, and in particular, Cartisian when restricted to $U$ and $i^{-1}U$.

The desired isomorphism is induced by Proposition~\ref{cor:cartnorm} and Theorem~\ref{thm:desc}.

2) Let $(s_i)_{i=0,\dots,d}$ be a sequence of sections of $L_i$, assume that $\bar{Z}:=\cap_{i=1}^d Z(s_i)$ is flat over $S$. Then by 1), we have an isomorphism
\[
\beta:\langle L_0,\dots,L_n\rangle_{X/S}\rightarrow N_{\bar{Z}/S}(L_0).
\]
On an open subset $f^{-1}(W)\subset \bar{Z}$ where $s_0$ is nowhere zero, we have a section $N_{W/S}(s_0)$ of $N_{Z/S}(L_0)$, hence the inverse image under beta becomes a nowhere zero section of $\langle L_0,\dots,L_n\rangle_{X/S}$, which we denote as 
\[
\langle s_0,\dots,s_n\rangle_{X/S}.
\]

3) The symmetric morphism: Assume in this paragraph that $L_0$ is also sufficiently ample.
Given a permutation of $\{0,\dots,d\}$, say $\sigma$, we have an isomorphism
\[
\langle L_0,\dots,L_n\rangle_{X/S}\rightarrow
\langle L_{\sigma 0},\dots,L_{\sigma n}\rangle_{X/S}.
\]
The construction is obvious if $\sigma 0=0$. So we are reduced to the following case: $n=1$, $\sigma$ exchanges $0$ and $1$.

We shall use $\approx$ to denote an isomorphism without counting torsions given by $\mathcal{O}(n)$ sheaves. We shall omit all obvious pull-back maps.

Consider the following diagram on $U\times_S X$:
\begin{equation}
\begin{tikzcd}
           & 0 & 0 & &\\
0\arrow[r] & \mathcal{L}_0|_{Z(\sigma_0)}\arrow[u]\arrow[r,"\sigma_1"] & \mathcal{L}_0|_{Z(\sigma_0)}\otimes \mathcal{L}_1 \arrow[u] \arrow[r] &0 &\\
0 \arrow[r] &\mathcal{L}_0 \arrow[u]\arrow[r,"\sigma_1"] &\mathcal{L}_0\otimes \mathcal{L}_1 \arrow[u]\arrow[r]&\mathcal{L}_0\otimes\mathcal{L}_1|_{Z(\sigma_1)}\arrow[u]\arrow[r] &0 \\
0 \arrow[r] & \mathcal{O}_X \arrow[u,"\sigma_0"]\arrow[r,"\sigma_1"] &\mathcal{L}_1 \arrow[u,"\sigma_0"]\arrow[r]&\mathcal{L}_1|_{Z(\sigma_1)}\arrow[u,"\sigma_0"]\arrow[r] &0\\
 &0 \arrow[u]&0\arrow[u]&0\arrow[u] &
\end{tikzcd}
\end{equation}
Let $\mathcal{S}$ be the determinant line bundle of the lower left square, namely
\[
\mathcal{D}=\lambda(\mathcal{L}_0\otimes\mathcal{L}_1)\otimes \lambda(\mathcal{L}_1)^{-1} \otimes \lambda(\mathcal{L}_0)^{-1} \otimes \lambda(\mathcal{O}_X).
\]
There are natural isomorphisms
\[
\mathcal{D}\stackrel{\sim}{\rightarrow} \lambda(\mathcal{L}_0\otimes \mathcal{L}_1|_{Z(\sigma_1)})\otimes 
\lambda(\mathcal{L}_1|_{Z(\sigma_1)})^{-1}
\stackrel{\sim}{\rightarrow}
N_{Z(\sigma_1)/\mathbb{P}}(\mathcal{L}_0)\approx \langle L_0,L_1\rangle_{X/S}
.
\]
and
\[
\mathcal{D}\stackrel{\sim}{\rightarrow} \lambda(\mathcal{L}_0|_{Z(\sigma_0)}\otimes \mathcal{L}_1)\otimes 
\lambda(\mathcal{L}_0|_{Z(\sigma_0)})^{-1}
\stackrel{\sim}{\rightarrow}
N_{Z(\sigma_0)/\mathbb{P}}(\mathcal{L}_1)\approx \langle L_1,L_0\rangle_{X/S}
.
\]
Here $\lambda$ means $\lambda$ from different sets to a subset of $\mathbb{P}$. The middle isomorphism is that in Lemma~\ref{lma:codim0lambda}.

From these isomorphisms, we get the desired 
\begin{equation}\label{eq:isoperm}
\langle L_0 ,L_1\rangle_{X/S}\rightarrow \langle L_1,L_0\rangle_{X/S}.
\end{equation}
There is a little problem with sign here. Namely, the trivializations of $\mathcal{D}$ defined by $N\sigma_1$ and $N\sigma_0$ are different. To be precise, let $r_0$ be the number of $\mathcal{O}(1)$ torsions there are in $N_{Z(\sigma_1)/\mathbb{P}}(\mathcal{L}_0)$, and similarly define $r_1$. Then the two trivializations differ by a factor $(-1)^{r_1 r_0}$.(See, for example, \cite{Elk89} Page~207) We correct the isomorphism (\ref{eq:isoperm}) by this factor.

In particular, we find, using the notations of 2), that
\[
\langle s_0,s_1\rangle_{X/S}\rightarrow \langle s_1,s_0\rangle_{X/S}
\] 
under this isomorphism.

4) Additivity morphisms:
Additivity with respect to $L_i$, $i\neq 0$. This is essentially constructed in the proof of uniqueness, see in particular (\ref{eq:Zaddi}).

Additivity with respect to $L_0$ is induced by the additivity of $N$ proved in Proposition~\ref{prop:addN}.

It is immediate that the sections constructed in 2) is additive with respect to all $s_i$.

5) When $L_0=f^*L$ for some line bundle $L$ on $S$, we need an isomorphism 
\[
\gamma:\langle L_0,L_1,\dots,L_n\rangle_{X/S}\stackrel{\sim}{\rightarrow} L^{N}.
\]
This has been constructed in Proposition~\ref{prop:normpull}.

Note that if the generic fibre of $f$ is regular, we could conclude that $N$ is equal to the intersection number of $L_1,\dots,L_N$ at the generic fibre.

Now we have a number of axioms to verify. More precisely:

a) The additivity morphism is compatible with the symmetric morphism. 

b) These constructions are natural with respect to base extension.

c) The additivity morphisms satisfy the compatibility condition \ref{eq:comp}.

Among them, b) and c) are easy. We only prove a) here. The non-trivial part can be reduced as follows: assume further $n=1$, $\sigma(0)=1$, $L_0$, $L_1$, $L_1'$ are sufficiently ample line bundles on $X$. We need to prove the following diagram commutes:
\[
\begin{tikzcd}
\langle L_0,L_1\otimes L_1' \rangle_{X/S}\arrow[r,"a"] \arrow[d,"b"]& \langle L_0,L_1 \rangle_{X/S}\otimes \langle L_0,L'_1 \rangle_{X/S}\arrow[d,"c"]\\
\langle L_1\otimes L_1',L_0 \rangle_{X/S}\arrow[r,"d"]&\langle L_1,L_0 \rangle_{X/S}\otimes \langle L'_1,L_0 \rangle_{X/S}
\end{tikzcd}
\]
It suffices to show that the local sections of $\langle L_0,L_1\otimes L_1' \rangle_{X/S}$ have the same images in $\langle L_1,L_0 \rangle_{X/S}\otimes \langle L'_1,L_0 \rangle_{X/S}$ under both maps. This follows from what we have established in 3) and 4) above.

Step 2. Extension to general $L_i$'s. This is already done in the proof of uniqueness. Namely, writing
\[
L_i=L_i^0\otimes (L_i^1)^{-1},
\] 
where $i>0$, $L_i^j$ are sufficiently ample line bundles on $X$.
 
Set $P=\{0,1\}^n$.
We then define
\[
\langle L_0,L_1,\dots, L_n \rangle_{X/S}:=\prod_{I\in P} \langle  L_0,L_1^{I(1)},\dots, L_1^{I(n)}\rangle_{X/S}^{(-1)^{\sum_j P(j)}}.
\]
This construction is independent of the choice of $L_i^j$ up to unique isomorphism as shown in the proof of uniqueness. We fix one choice.

All requirements in the definition of Deligne pairings now follows easily from what we have established.
\end{proof}
\begin{proposition}\label{prop:extnondp}
Each noetherian Deligne pairing extends naturally to a Deligne pairing.
\end{proposition}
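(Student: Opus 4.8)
The plan is to construct the extended pairing by descending every piece of data to a noetherian level and then pulling back, via the limit formalism of \cite{EGAIV} \S8 and \S11. First I would reduce to the case where $S$ is affine. Since the pairing is to produce a line bundle on $S$ together with its structural isomorphisms, all of this may be built Zariski-locally on $S$ and glued: the gluing data on the overlaps of an affine cover are furnished by the $\alpha$-isomorphisms (restriction along an open immersion is a base change), and the cocycle condition on triple overlaps follows from the naturality of $\alpha$ with respect to composition of base changes. So assume $S=\Spec A$, which is in particular quasi-compact and quasi-separated. Write $A=\varinjlim_\lambda A_\lambda$ as the filtered colimit of its finitely generated $\mathbb{Z}$-subalgebras, so that $S=\varprojlim_\lambda S_\lambda$ with $S_\lambda=\Spec A_\lambda$ noetherian and affine transition maps. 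By the limit theorems of \cite{EGAIV} \S8 (descent of finitely presented schemes, morphisms and quasi-coherent sheaves to a finite level) together with \S11 (the properties ``projective'', ``flat'' and ``of pure relative dimension $n$'' are detected at a finite level), there is an index $\lambda_0$ and a datum $(f_{\lambda_0}:X_{\lambda_0}\to S_{\lambda_0})\in\mathcal{G}^{n}$ equipped with line bundles $L_{0,\lambda_0},\dots,L_{n,\lambda_0}$ on $X_{\lambda_0}$ whose base change along the projection $g_{\lambda_0}:S\to S_{\lambda_0}$ recovers $(f,L_0,\dots,L_n)$. I would then set
\[
\langle L_0,\dots,L_n\rangle_{X/S}:=g_{\lambda_0}^*\,\langle L_{0,\lambda_0},\dots,L_{n,\lambda_0}\rangle_{X_{\lambda_0}/S_{\lambda_0}},
\]
the right-hand side being the given noetherian pairing.

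Next comes well-definedness. If two descents are given over levels $\lambda_0$ and $\mu_0$, then after passing to a common higher level they become isomorphic, since both morphisms of finitely presented schemes and isomorphisms of finitely presented sheaves are detected at a finite level (\cite{EGAIV} \S8). Such an isomorphism of descended data, fed into the base-change isomorphism $\alpha$ of the noetherian pairing, yields a canonical comparison isomorphism between the two candidate line bundles on $S$; its independence of the auxiliary level and its coherence over triples of levels follow from the cocycle compatibilities of $\alpha$ already available over noetherian bases. Thus $\langle L_0,\dots,L_n\rangle_{X/S}$ is well defined up to canonical isomorphism, and I fix a representative according to the set-theoretic convention of the paper.

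It remains to produce the structural isomorphisms and verify their axioms. Each of $\alpha,\beta,\gamma,\delta$, and the data it involves — a Cartesian square, a non-zero effective relative Cartier divisor $D$, a pullback $f^*L$ together with its fibrewise intersection number, and the norm $N_{X/S}$ — descends to a finite noetherian level by the same machinery (\cite{EGAIV} \S8, \S11, using that relative effective Cartier divisors and their flatness descend). One defines the extended isomorphism as the pullback of the noetherian one; every required commutative diagram holds because it already holds after descent to a noetherian base and pullback is functorial. The case $n=0$ serves as the anchor: here $\delta$ identifies the pairing with $N_{X/S}$, and since the norm commutes with arbitrary base change by Corollary~\ref{cor:cartnorm}, the limit definition agrees with $N_{X/S}$ on the nose, so no further checking is needed. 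Finally, when $S$ is already locally noetherian one takes the trivial descent, so the extension restricts to the given noetherian pairing, and its essential uniqueness is then a consequence of Theorem~\ref{thm:uniqueness}.

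The main obstacle will be purely organisational rather than geometric: marshalling the limit results so that all the data — the morphism $f$, the line bundles $L_i$, the divisors, and the various Cartesian squares — are simultaneously defined over a single finite level, and then checking that the web of cocycle and compatibility diagrams, which already commute over each noetherian $S_\lambda$, is faithfully transported to $S$ by pullback. No new input is needed beyond the quasi-compact quasi-separated reductions; the entire content lies in the coherence bookkeeping, which is why the extension is, as advertised, quite routine.
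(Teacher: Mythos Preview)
Your proposal is correct and follows essentially the same approach as the paper: reduce to affine $S$, descend $(f,L_0,\dots,L_n)$ to a noetherian level via the limit formalism of \cite{EGAIV} \S8 and \S11, define the pairing as the pullback of the noetherian one, and transport $\alpha,\beta,\gamma,\delta$ by the same mechanism. Your treatment of well-definedness (comparing two descents via a common refinement and the $\alpha$-isomorphism) and of the affine gluing is somewhat more explicit than the paper's, but the argument is the same in substance.
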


\begin{proof}
Let $(\langle \cdot \rangle,\alpha,\beta,\gamma,\delta)$ be a noetherian Deligne pairing.

Let $(f:X\rightarrow S)\in \mathcal{F}^n$. Let $L_0,\dots, L_n$ be line bundles on $X$. Recall that $\PIC$ is an Artin stack, so we may assume further that $S$ is affine, say $S=\Spec A$. It follows from \cite{EGAII} Proposition~8.9.1, Proposition~8.5.5, Th\'{e}or\`{e}me~8.5.2, Th\'{e}or\`{e}me~8.10.5, Corollaire~11.2.7 that there is a subring $A_0$ of $A$, which is of finite type over $\mathbb{Z}$ and a projective flat morphism $f_0:X_0\rightarrow \Spec A_0$ between noetherian schemes such that $f$ is the base extension of $f_0$ along the natural morphism $j:\Spec A\rightarrow \Spec A_0$ 
\[
\begin{tikzcd}
X \arrow[r,"j'"]\arrow[d,"f"] & X_0\arrow[d,"f_0"]\\
\Spec A \arrow[r,"j"] & \Spec A_0
\end{tikzcd}
\]
and there exists line bundles $L_i^0$ on $X_0$ such that there is a canonical isomorphism
\[
\varphi_i:L_i\cn j'^{*}L_i^0.
\]
Examining the proof of \cite{EGAII} Th\'{e}or\`{e}me~8.5.2(ii) and using the fact that quasi-coherent sheaves form an Artin stack (\cite{Lau99} Th\'{e}or\`{e}me~4.6.2.1), it is not hard to see that there is a functorial choice of $L_i^0$ when $A_0$ is fixed. 

Now we define
\[
\langle L_0,\dots,L_n\rangle_{X/\Spec A}:=j^*\langle L_0^0,\dots,L_n^0 \rangle_{X_0/\Spec A_0}.
\]

This construction depends on the choice of $A_0$ in the following manner: for two different choices of $A_0$, there is a canonical isomorphism between the corresponding Deligne pairings, as stated in Appendix~\ref{sec:appPic}, we may fix a choice so that the definition is completely fixed.

The base extension invariance follows from \cite{EGAII} Proposition~8.9.1 and the corresponding result in noetherian case. 

The $\beta$ isomorphism is established by \cite{EGAII} Th\'{e}or\`{e}me~8.5.2 and Corollaire~11.2.7.

The $\gamma$ isomorphism and the fact that the pairing is symmetric multilinear follow from \cite{EGAII} Th\'{e}or\`{e}me~8.5.2.

The $\delta$ isomorphisms follows from Lemma~\ref{lma:natlambda}.

Finally, the compatibility of these isomorphisms follows directly from the noetherian case.
\end{proof}
\subsection{Generalization to general Chern polynomials}
In this subsection, following \cite{Elk89}, we generalize the construction in the previous section to more general Chern polynomials.

Let $(f:X\rightarrow S)\in \mathcal{F}^n$.

Let $E_i$ ($1\leq i\leq N$) be vector bundles of rank $r_i+1$. Let $P$ be a homogeneous polynomial of $c_j(E_i)$ of degree $n+1$ with rational coefficients. ($c_j$ has degree $j$.)

We shall define a line bundle $\langle P(c_j(E_i) \rangle_{X/S}$ on $S$ functorially, such that
\[
c_1\langle P(c_j(E_i) \rangle_{X/S}= f_* P(c_j(E_i)).
\]

The idea is: Segre class has a natural intersection theoretic meaning, so one may try to pull everything back to a projectivilization of $E_i$ and reduce to the line bundle case. This indeed works.

Set $\mathbb{P}=\mathbb{P}(E_1)\times \cdots \times \mathbb{P}(E_m)$.

\begin{definition}
The Deligne pairing of $P(c_j(E_i))$ is a line bundle $\langle P(c_j(E_i)) \rangle_{X/S}$ on $S$ with metric, defined by linear extension of
\begin{equation}\label{eq:extdelg}
\langle s_{k_1}(E_1)\dots s_{k_m}(E_m)\rangle_{X/S}:=\langle \mathcal{O}_{\mathbb{P}E_1}(1)\{r_1+k_1\},\dots,\mathcal{O}_{\mathbb{P}E_m}(1)\{r_m+k_m\} \rangle_{\mathbb{P}/S},
\end{equation}
where $s_j$ denotes the $j$-th Segre class, $\sum_j k_j=n+1$.
\end{definition}
An easy calculation shows
\begin{proposition} Then Chern class of Deligne pairing is given by
\[
c_1(\langle P(c_i(E_j)) \rangle_{X/S})=f_* P(c_i(E_j)).
\]
\end{proposition}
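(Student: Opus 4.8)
The plan is to reduce the statement to the fundamental compatibility of Deligne pairings with fibre integration and then to unwind the definition of the Segre class. Both assignments $P\mapsto c_1\langle P(c_j(E_i))\rangle_{X/S}$ and $P\mapsto f_*P(c_j(E_i))$ are $\mathbb{Q}$-linear in $P$, and the Deligne pairing of $P$ was defined by linear extension of \eqref{eq:extdelg}; here I read the definition on a polynomial given in Chern classes by first rewriting $P(c_j(E_i))$ as a $\mathbb{Q}$-linear combination of Segre monomials, using the relation $c(E_i)s(E_i)=1$ between the total Chern and total Segre classes to express each $c_j(E_i)$ as a polynomial in $s_1(E_i),s_2(E_i),\dots$. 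Collecting the degree-$(n+1)$ part produces a combination of monomials $s_{k_1}(E_1)\cdots s_{k_m}(E_m)$ with $\sum_i k_i=n+1$. Since $c_1(L\otimes M)=c_1(L)+c_1(M)$ and $f_*$ is additive, it then suffices to prove, for a single such monomial,
\[
c_1\langle s_{k_1}(E_1)\cdots s_{k_m}(E_m)\rangle_{X/S}=f_*\bigl(s_{k_1}(E_1)\cdots s_{k_m}(E_m)\bigr).
\]

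For a fixed monomial I would first invoke the definition \eqref{eq:extdelg} together with the compatibility of $c_1$ with fibre integration recorded in the remark following Definition~\ref{def:DP}. Writing $\xi_i=c_1(\mathcal{O}_{\mathbb{P}E_i}(1))$ this gives, with $p:\mathbb{P}\to X$ the structure map of $\mathbb{P}=\mathbb{P}(E_1)\times_X\cdots\times_X\mathbb{P}(E_m)$,
\[
c_1\langle s_{k_1}(E_1)\cdots s_{k_m}(E_m)\rangle_{X/S}=(f\circ p)_*\bigl(\xi_1^{r_1+k_1}\cdots \xi_m^{r_m+k_m}\bigr).
\]
Since the map $\mathbb{P}\to S$ is the composite of $p$ with $f$, functoriality of proper push-forward lets me factor $(f\circ p)_*=f_*\circ p_*$, so the problem is reduced to computing the push-forward along $p$ of the product $\xi_1^{r_1+k_1}\cdots\xi_m^{r_m+k_m}$.

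The heart of the computation, and the step I expect to be the main obstacle, is the multi-factor evaluation
\[
p_*\bigl(\xi_1^{r_1+k_1}\cdots \xi_m^{r_m+k_m}\bigr)=s_{k_1}(E_1)\cdots s_{k_m}(E_m),
\]
which is a product version of the defining formula $s_k(E)=\pi_*\bigl(c_1(\mathcal{O}_{\mathbb{P}E}(1))^{r+k}\bigr)$ for the Segre class of a bundle of rank $r+1$. I would establish it by iterating the projection formula: push-forward along the last factor $\mathbb{P}(E_m)\to \mathbb{P}(E_1)\times_X\cdots\times_X\mathbb{P}(E_{m-1})$, along which $\xi_1,\dots,\xi_{m-1}$ are pulled back and hence factor out, leaves $s_{k_m}(E_m)$ times the remaining classes, and induction on $m$ yields the full product. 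The care required is bookkeeping: each $\mathcal{O}_{\mathbb{P}E_i}(1)$ must be recognised as pulled back from the $i$-th factor so that the projection formula applies at every stage, and the Segre-class convention ($\operatorname{rk}E_i=r_i+1$, hence the exponent $r_i+k_i$) must be respected throughout. Combining the three displays gives the desired equality for each monomial, and summing over the monomials appearing in the Segre-class expansion of $P$ completes the proof.
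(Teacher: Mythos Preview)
Your proposal is correct and is precisely the ``easy calculation'' the paper alludes to but does not write out: the paper gives no proof of this proposition beyond the phrase ``An easy calculation shows,'' and your reduction to Segre monomials via \eqref{eq:extdelg}, invocation of the remark after Definition~\ref{def:DP} for the line-bundle Deligne pairing on $\mathbb{P}/S$, and iterated projection-formula computation of $p_*(\xi_1^{r_1+k_1}\cdots\xi_m^{r_m+k_m})=s_{k_1}(E_1)\cdots s_{k_m}(E_m)$ is exactly the intended argument.
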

In the following $Q$ will denote a Chern polynomial of $E_i$ of proper degree.
\begin{proposition}\label{prop:seqdelign}
Let $0\rightarrow E\rightarrow F\rightarrow G \rightarrow 0$ be an exact sequence of vector bundles on $X$, then there is a functorial isomorphism
\[
\langle c_j(F)Q\rangle_{X/S}\cn \sum_{a+b=j}\langle c_a(E)c_b(G) Q\rangle_{X/S}
\]
Similar result holds for Todd classes.
\end{proposition}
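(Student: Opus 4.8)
The statement is the Whitney sum formula lifted from the Chow ring to the level of Deligne pairings: at the level of Chern classes one has $c_j(F)=\sum_{a+b=j}c_a(E)c_b(G)$, and the proposition asserts that this identity is realized by a \emph{canonical} isomorphism of the associated line bundles on $S$. The plan is to reduce to the case of line bundles via the splitting principle, where the relation becomes a formal consequence of the additivity and symmetry of the line-bundle Deligne pairing established in Section~\ref{sec:gen}.

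First I would dispose of the spectator factor $Q$. Since the Chern-polynomial Deligne pairing is defined by linear extension of \eqref{eq:extdelg} over a product of projectivizations, the monomials appearing in $Q$ only contribute additional factors $\mathbb{P}(E_i)$ to $\mathbb{P}$ and additional hyperplane slots, and these are identical on both sides of the claimed isomorphism. Using the base-change isomorphism $\alpha$ of the underlying line-bundle pairing, I would absorb these factors into an enlarged base and reduce to the case $Q=1$, that is, to constructing a functorial isomorphism $\langle c_j(F)\rangle_{X/S}\cn\sum_{a+b=j}\langle c_a(E)c_b(G)\rangle_{X/S}$. It is convenient to phrase everything through total classes, recalling that $c(F)=c(E)c(G)$ is equivalent to the Segre-class identity $s(F)=s(E)s(G)$, which is the form directly visible in \eqref{eq:extdelg}.

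Next comes the splitting principle. Let $p:\mathrm{Fl}\to X$ be the flag bundle parametrizing complete flags of $F$ refining the subbundle $E$, so that $p^*F$ carries a filtration whose successive quotients are line bundles $L_1,\dots,L_r$, the first $e=\rk E$ of them filtering $p^*E$ and the remaining ones filtering $p^*G$. Writing $x_i=c_1(L_i)$, the total Chern classes factor as $c(p^*F)=\prod_i(1+x_i)$, $c(p^*E)=\prod_{i\le e}(1+x_i)$ and $c(p^*G)=\prod_{i>e}(1+x_i)$, so the Whitney relation becomes the tautological factorization $\prod_i(1+x_i)=\prod_{i\le e}(1+x_i)\prod_{i>e}(1+x_i)$. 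On $\mathrm{Fl}$ the Chern-polynomial Deligne pairing, unwound through \eqref{eq:extdelg} and the tower of projective bundles, reduces to line-bundle Deligne pairings in the classes $x_i$; the desired isomorphism is then obtained formally from the multilinearity and symmetry of the line-bundle pairing, exactly in the spirit of Principle~\ref{prin:verif}. To return to $X$ I would use that \eqref{eq:extdelg} already presents the $X/S$ pairing as a line-bundle pairing on a projective bundle, so that functoriality matches it, up to canonical isomorphism, with the flag-bundle computation, the relative intersection numbers along the fibres (as in the $\gamma$ isomorphism of Definition~\ref{def:DP}) supplying the exponents.

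The hard part will be exactly this matching step: one must promote the Segre-class identity $s(F)=s(E)s(G)$ to a functorial isomorphism, checking that the construction attached to $\mathbb{P}(F)$ and the construction attached to the split line bundles on $\mathrm{Fl}$ are linked by a coherent chain of canonical isomorphisms, and that the $\mathcal{O}(1)$-twists and relative dimensions occurring in \eqref{eq:extdelg} are accounted for correctly throughout the reduction. This is the genuine intersection-theoretic content of the proposition; the remaining verifications are formal. Finally, the assertion for Todd classes follows by the identical argument, since $\Td$ is multiplicative under short exact sequences and factors as $\prod_i x_i/(1-e^{-x_i})$ over the Chern roots, so the same splitting-principle reduction and descent apply verbatim.
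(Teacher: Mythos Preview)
Your reduction of the spectator $Q$ is fine, and you have correctly located the content of the proposition in the Segre identity $s(F)=s(E)s(G)$. But the splitting-principle argument, as you have set it up, is circular.

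By definition \eqref{eq:extdelg}, $\langle s_k(F)\rangle_{X/S}$ is a line-bundle Deligne pairing of copies of $\mathcal{O}(1)$ on the \emph{single} projectivization $\mathbb{P}(F)$, not on the flag bundle $\mathrm{Fl}$. Passing to $\mathrm{Fl}$ and writing everything in Chern roots $x_i$ tells you what the answer ought to be, but to match it with the actual definition you must compare the pairing built on $\mathbb{P}(F)$ with the iterated pairing along the tower $\mathrm{Fl}\to\cdots\to\mathbb{P}(F)$. Each step of that tower carries its own tautological sequence $0\to\mathcal{O}(-1)\to p^*V\to Q\to 0$, and the comparison you need at each step is exactly an instance of the Whitney isomorphism you are trying to prove. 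So the splitting principle does not reduce the problem; it restates it. You acknowledge this matching as ``the hard part'', but nothing in your outline addresses it.

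The paper itself gives no proof and defers to Elkik \cite{Elk89}, \cite{Elk90}. Her argument is direct and geometric rather than via splitting: the surjection $F\twoheadrightarrow G$ produces a regular section of $p^*G\otimes\mathcal{O}_{\mathbb{P}(F)}(1)$ whose zero scheme is the closed embedding $\mathbb{P}(E)\hookrightarrow\mathbb{P}(F)$, and the blow-up of $\mathbb{P}(F)$ along $\mathbb{P}(E)$ projects to $\mathbb{P}(G)$ as a projective bundle. These incidences give explicit $\beta$-type isomorphisms relating the $\mathcal{O}(1)$-pairings on $\mathbb{P}(F)$, $\mathbb{P}(E)$ and $\mathbb{P}(G)$, which is precisely $s(F)=s(E)s(G)$ at the level of line bundles. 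That geometric step is what your outline is missing; once it is in hand, the splitting principle becomes a corollary rather than a method of proof. The Todd-class assertion then follows formally from the Chern-class case by the usual power-series manipulation, as you say.
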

\begin{proposition}
Let $D$ be a non-zero effective relative Cartier divisor on $X$. Then there is a functorial isomorphism
\[
\langle c_1(\mathcal{O}_X(\mathcal{D})) Q\rangle_{X/S}\cn \langle Q \rangle_{D/S}
\]
\end{proposition}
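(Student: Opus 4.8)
The plan is to reduce the statement, via the defining formula \eqref{eq:extdelg} and multilinearity, to the already-established $\beta$-isomorphism for Deligne pairings of \emph{line bundles}. Since the Deligne pairing of a Chern polynomial is defined by linear extension, it suffices to treat the case where $Q$ is a single monomial $s_{k_1}(E_1)\cdots s_{k_m}(E_m)$ of Segre classes with $\sum_i k_i=n$; the factor $c_1(\mathcal{O}_X(D))$ then contributes one further slot occupied by $\mathcal{O}_X(D)$ itself (its projectivization being $X$). Writing $p:\mathbb{P}=\mathbb{P}(E_1)\times_X\cdots\times_X\mathbb{P}(E_m)\to X$ and $q:\mathbb{P}\to S$ for the structure map, and unwinding \eqref{eq:extdelg}, the left-hand side becomes the line-bundle Deligne pairing
\[
\langle p^*\mathcal{O}_X(D),\,\mathcal{O}_{\mathbb{P}E_1}(1)\{r_1+k_1\},\dots,\mathcal{O}_{\mathbb{P}E_m}(1)\{r_m+k_m\}\rangle_{\mathbb{P}/S}.
\]

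Next I would identify the right-hand side. Since $\mathbb{P}(E_i|_D)=\mathbb{P}(E_i)\times_X D$ and $\mathcal{O}_{\mathbb{P}(E_i|_D)}(1)=\mathcal{O}_{\mathbb{P}E_i}(1)|_{p^{-1}(D)}$, the projectivization used to define $\langle Q\rangle_{D/S}$ is exactly $\mathbb{P}_D:=p^{-1}(D)$, and \eqref{eq:extdelg} identifies $\langle Q\rangle_{D/S}$ with the line-bundle Deligne pairing of the restricted bundles $\mathcal{O}_{\mathbb{P}E_i}(1)|_{\mathbb{P}_D}$ over $\mathbb{P}_D/S$. The key geometric observation is that $p$ is smooth, hence flat, so $\mathbb{P}_D=p^{-1}(D)$ is a non-zero effective relative Cartier divisor on $\mathbb{P}/S$: the local equation of $D$ remains a non-zero-divisor after the flat pullback $p^*$, and $\mathbb{P}_D\to D\to S$ is a composite of flat morphisms. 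In particular $p^*\mathcal{O}_X(D)=\mathcal{O}_{\mathbb{P}}(\mathbb{P}_D)$ and $(\mathbb{P}_D\to S)\in\mathcal{F}^{\,n-1+\sum_i r_i}$, and a count of slots ($n+\sum_i r_i+1$ on the left, $n+\sum_i r_i$ on the right) confirms the dimensions match once one slot is removed.

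With this identification in hand, the desired isomorphism is precisely the $\beta$-isomorphism of the line-bundle Deligne pairing on $\mathbb{P}/S$ applied to the divisor $\mathbb{P}_D$,
\[
\beta_{q,\mathbb{P}_D}:\langle \mathcal{O}_{\mathbb{P}}(\mathbb{P}_D),\,\mathcal{O}_{\mathbb{P}E_1}(1)\{r_1+k_1\},\dots\rangle_{\mathbb{P}/S}\cn\langle \mathcal{O}_{\mathbb{P}E_1}(1)|_{\mathbb{P}_D}\{r_1+k_1\},\dots\rangle_{\mathbb{P}_D/S},
\]
which deletes the $\mathcal{O}_{\mathbb{P}}(\mathbb{P}_D)$ slot and restricts the remaining $\mathcal{O}_{\mathbb{P}E_i}(1)$ slots to $\mathbb{P}_D$. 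This produces the claimed isomorphism monomial by monomial.

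The remaining work, which I expect to be the main obstacle, is purely bookkeeping: one must check that the isomorphisms produced on individual Segre monomials are compatible with the symmetric multilinear structure, so that they assemble into a \emph{single} functorial isomorphism of the Deligne pairings of $c_1(\mathcal{O}_X(D))Q$ and $Q$, independent of how $Q$ is expressed. This will follow from the naturality of $\beta$ with respect to base extension and its compatibility with the symmetry and additivity isomorphisms recorded in Definition~\ref{def:DP}, together with the fact that passing from $\mathbb{P}$ over $X$ to $\mathbb{P}_D$ over $D$ is itself a base extension along $D\hookrightarrow X$. Functoriality in $S$ and compatibility with exact sequences as in Proposition~\ref{prop:seqdelign} then follow formally, since every ingredient used is natural.
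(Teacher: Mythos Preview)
Your argument is correct and is exactly the natural one: unwind the defining formula \eqref{eq:extdelg} to turn both sides into line-bundle Deligne pairings on the tower of projectivizations, observe that $p^{-1}(D)$ is again a non-zero effective relative Cartier divisor on $\mathbb{P}/S$ because $p$ is smooth, and then apply the line-bundle $\beta$-isomorphism. The identification $\mathbb{P}(E_i|_D)=\mathbb{P}(E_i)\times_X D$ with compatible $\mathcal{O}(1)$'s is precisely what makes the restricted pairing on $\mathbb{P}_D/S$ coincide with $\langle Q\rangle_{D/S}$, and your slot and relative-dimension counts are right.

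The paper itself does not prove this proposition; it simply refers to \cite{Elk89} and \cite{Elk90}. Elkik's argument proceeds in the same spirit as yours, reducing to the already-constructed restriction isomorphism for line-bundle pairings via the Segre-class definition. So there is nothing to compare: you have reconstructed the intended proof. The bookkeeping you flag at the end (compatibility of the monomial-by-monomial isomorphisms with the symmetric multilinear structure) is indeed routine, following from the base-change naturality of $\beta$ along $D\hookrightarrow X$ and from the fact that restriction to $\mathbb{P}_D$ commutes with the symmetry and additivity data of the line-bundle pairing.
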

\begin{proposition}
Let $L_0,\dots,L_n$ be line bundles on $X$. Then
\[
\langle L_0,\dots,L_n\rangle_{X/S}\cn \langle c_1(L_0)\dots c_1(L_n)\rangle_{X/S}.
\]
\end{proposition}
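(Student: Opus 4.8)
The plan is to unwind the defining formula (\ref{eq:extdelg}) of the generalized pairing in the special case where every vector bundle involved has rank one, for then the auxiliary projectivizations collapse and the Segre classes degenerate to first Chern classes. Concretely, I apply the definition with $m=n+1$ bundles $E_i=L_i$ for $i=0,\dots,n$, so that each $E_i$ has rank $r_i+1=1$, i.e.\ $r_i=0$, and $c_1(L_0)\cdots c_1(L_n)$ is a homogeneous Chern polynomial of degree $n+1$ in the $c_1(E_i)$.

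For a line bundle $L$ the projective bundle $\mathbb{P}(L)=\Proj(\mathrm{Sym}\,L)$ is canonically the identity $p\colon X\cn X$ over $X$, and under this identification the relative tautological bundle satisfies $\mathcal{O}_{\mathbb{P}L}(1)=L$; this is forced by the same convention $\pi_*\mathcal{O}_{\mathbb{P}}(1)=E$ that is fixed in Subsection~\ref{subsec:univext}. Consequently the relevant Segre class is
\[
s_1(L)=p_*\bigl(c_1(\mathcal{O}_{\mathbb{P}L}(1))^{r+1}\bigr)=c_1(L),
\]
since $r=0$ and $p$ is the identity. Thus $c_1(L_0)\cdots c_1(L_n)$ is precisely the Segre monomial $s_1(L_0)\cdots s_1(L_n)$ with every $k_i=1$, so no linear extension is required and the defining formula (\ref{eq:extdelg}) applies verbatim.

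With these data the ambient space in (\ref{eq:extdelg}) is $\mathbb{P}=\mathbb{P}(L_0)\times_X\cdots\times_X\mathbb{P}(L_n)$, which is canonically $X$ because each factor is; the structure map $\mathbb{P}\to S$ is thereby identified with $f$, so $(\mathbb{P}\to S)\in\mathcal{F}^n$ agrees with $(f\colon X\to S)$. Each entry $\mathcal{O}_{\mathbb{P}L_i}(1)$ is repeated $r_i+k_i=1$ time and pulls back to $L_i$ under $\mathbb{P}\cn X$. Hence (\ref{eq:extdelg}) reads
\[
\langle c_1(L_0)\cdots c_1(L_n)\rangle_{X/S}
=\langle \mathcal{O}_{\mathbb{P}L_0}(1),\dots,\mathcal{O}_{\mathbb{P}L_n}(1)\rangle_{\mathbb{P}/S}
\cn\langle L_0,\dots,L_n\rangle_{X/S},
\]
which is the assertion.

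The only point needing care is the final identification: it uses the base-change isomorphism $\alpha$ of the Deligne pairing to transport $\langle\cdots\rangle_{\mathbb{P}/S}$ along the canonical isomorphism $\mathbb{P}\cn X$ together with the identifications $\mathcal{O}_{\mathbb{P}L_i}(1)\cn L_i$. I expect this to be a routine check of compatibility of canonical isomorphisms rather than a genuine obstacle; the substantive content of the proposition is entirely contained in the convention $s_1(L)=c_1(L)$ for line bundles and in the collapse of the projectivizations to the identity.
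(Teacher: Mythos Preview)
Your argument is correct and is exactly the natural one: when every $E_i$ is a line bundle, each $\mathbb{P}(L_i)$ collapses to $X$, each $\mathcal{O}_{\mathbb{P}L_i}(1)$ identifies with $L_i$, and the Segre monomial becomes the Chern monomial, so the defining formula (\ref{eq:extdelg}) returns the original Deligne pairing. The paper itself does not give an independent proof of this proposition but simply refers to \cite{Elk89} and \cite{Elk90}; what you have written is precisely the direct unwinding one finds there.

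One small caveat worth recording: whether $s_1(L)=c_1(L)$ or $s_1(L)=-c_1(L)$, and whether $\mathcal{O}_{\mathbb{P}L}(1)=L$ or $L^{-1}$, depends on which projectivization convention is in force in (\ref{eq:extdelg}); the paper is not entirely explicit about this, and the computation in Subsection~\ref{subsec:univext} you cite concerns $\mathbb{P}(E^\vee)$ rather than $\mathbb{P}(E)$. But this is harmless for your purpose: under either convention the two sign changes cancel, since $\langle L_0^{-1},\dots,L_n^{-1}\rangle\cn(-1)^{n+1}\langle L_0,\dots,L_n\rangle$ and $c_1(L_0)\cdots c_1(L_n)=(-1)^{n+1}s_1(L_0)\cdots s_1(L_n)$ pair off. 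So the identification you obtain is convention-independent, and your last remark that the only remaining step is transporting along the canonical isomorphism $\mathbb{P}\cn X$ via the $\alpha$-isomorphism is accurate.
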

For the proofs, we refer to \cite{Elk90} and \cite{Elk89}.

We note that Principle~\ref{prin:verif} still holds for these general Deligne pairings.

\section{Proof of Deligne-Riemann-Roch theorem}\label{sec:DRR}
In this section, we prove a higher dimensional Deligne-Riemann-Roch theorem.

\begin{theorem}\label{thm:DRR}
Let $(f:X\rightarrow S)\in \mathcal{F}^n$. Assume that  $f$ is l.c.i..
Let $E$ be a vector bundle on $X$. Then there is an invariant  functorial isomorphism between functors $\Vect(X)\rightarrow \PIC_{\mathbb{Q}}(S)$
\begin{equation}\label{eq:DRR}
\lambda_{X/S}(E)\cn \langle \RR^{n+1}(E)\rangle_{X/S}.
\end{equation}

Moreover, this isomorphism is compatible with short exact sequences.
\end{theorem}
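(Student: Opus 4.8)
The plan is to induct on the relative dimension $n$, first reducing from an arbitrary vector bundle $E$ to line bundles, and then descending along a divisor to relative dimension $n-1$. Both sides of \eqref{eq:DRR} are compatible with short exact sequences: the left side because $\det Rf_*$ is additive, and the right side because $\ch$ is additive together with Proposition~\ref{prop:seqdelign}. Granting the splitting principle --- pulling back to a flag bundle over $X$ on which $E$ acquires a filtration with line bundle quotients, where both sides behave functorially under the $\alpha$-isomorphisms --- it therefore suffices to construct the isomorphism, compatibly with symmetry and additivity, for line bundles $L$, after which the exact-sequence compatibility on all of $\Vect(X)$ follows formally. The rank-one trivial case $\lambda(\mathcal{O}_X)\cn\langle\Td^{n+1}(T_{X/S})\rangle$ is the one place where an explicit computation is unavoidable; here I would use the Koszul resolution of $\mathcal{O}_X$ attached to an embedding $X\hookrightarrow\mathbb{P}V$ to compute the left side directly.

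For a line bundle $L$, expanding $\ch(L^{\otimes j})=\exp(jc_1(L))$ gives the formal identity
\[
\langle\RR^{n+1}(jL)\rangle_{X/S}\cn\sum_{i=0}^{n+1}\frac{j^i}{i!}\,\langle c_1(L)^i\,\Td^{n+1-i}(T_{X/S})\rangle_{X/S}.
\]
By Theorem~\ref{thm:lambdarelation}, the line bundles $\lambda_{X/S}(jL)$ obey exactly the universal relations satisfied by a polynomial of degree $\le n+1$ in $j$, and the same is manifestly true of the right-hand side above. Consequently, proving \eqref{eq:DRR} for $E=L^{\otimes j}$ for every $j$ (in particular for $j=1$) is equivalent, by inverting the Vandermonde-type system at $j=0,\dots,n+1$, to the family of coefficient isomorphisms
\[
\langle c_1(L)^i\,\Td^{n+1-i}(T_{X/S})\rangle_{X/S}\cn i!\sum_{j=0}^{n+1}A^{n+1}_{i,j}\,\lambda_{X/S}(jL),\qquad 0\le i\le n+1,
\]
where the $A^{n+1}_{i,j}$ are the inverse-Vandermonde coefficients of Appendix~\ref{sec:appCom}. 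The case $i=0$ is precisely the trivial-bundle statement above, so it remains to treat $i>0$.

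For $i>0$ I would first polarize both sides, replacing the $i$-fold factor $c_1(L)^i$ by a product $c_1(L_0)c_1(L)^{i-1}$ with an auxiliary line bundle $L_0$, and correspondingly polarizing the terms $\lambda_{X/S}(jL)$ through the divided-difference structure furnished by Lemma~\ref{lma:lambdaind}. By Principle~\ref{prin:verif} we may then assume $L_0=\mathcal{O}_X(D)$ for a chosen sufficiently ample effective relative Cartier divisor $D$ with $(D\to S)\in\mathcal{F}^{n-1}$. On the pairing side the $\beta$-isomorphism yields
\[
\langle c_1(\mathcal{O}_X(D))\,c_1(L)^{i-1}\,\Td^{n+1-i}(T_{X/S})\rangle_{X/S}\cn\langle c_1(L|_D)^{i-1}\,\Td^{n+1-i}(T_{X/S})|_D\rangle_{D/S},
\]
while the normal bundle sequence $0\to T_{D/S}\to T_{X/S}|_D\to\mathcal{O}_X(D)|_D\to0$ gives $\Td(T_{X/S})|_D=\Td(T_{D/S})\,\Td(\mathcal{O}_X(D)|_D)$; on the determinant side Lemma~\ref{lma:lambdaind} expresses the relevant $\lambda_{X/S}$ in terms of $\lambda_{D/S}$ of bundles restricted to $D$. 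Applying the inductive Deligne-Riemann-Roch isomorphism on $D/S$ to both, the desired isomorphism in dimension $n$ becomes an identity between two explicit combinations of Deligne pairings of characteristic classes on $D$.

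The main obstacle is this final bookkeeping: after the normal-bundle Todd correction and the two layers of inverse-Vandermonde coefficients (the $A^{n+1}_{i,j}$ in dimension $n$ and the $A^{n}_{\cdot,\cdot}$ produced by the inductive hypothesis on $D$), one must verify that the two sides agree term by term. This is exactly the role of the combinatorial identities collected in Appendix~\ref{sec:appCom}, and I expect it to be the most delicate part of the argument. Once the coefficient isomorphisms are established for $i>0$, they reassemble --- by the equivalence of the second paragraph --- into the isomorphism \eqref{eq:DRR} for each line bundle $L$; it then remains to check that the construction is independent of the auxiliary choices (the divisor $D$ and the ample twist used to reach the sufficiently ample case) and compatible with base change via the $\alpha$-isomorphisms, so that it glues to an invariant functorial isomorphism on all of $\Vect(X)$ compatible with short exact sequences.
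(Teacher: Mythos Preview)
Your outline is correct and matches the paper's proof essentially step for step: reduce to line bundles, reformulate \eqref{eq:DRR} via the inverse-Vandermonde system as the coefficient isomorphisms of Lemma~\ref{lma:reductiondrr}, partially polarize one $c_1(L)$ factor into $c_1(L_0)$, pass by the universal-extension method to $L_0=\mathcal{O}_X(D)$, restrict both sides to $D$ using the $\beta$-isomorphism and Lemma~\ref{lma:lambdaind}, and conclude by induction together with the identities of Appendix~\ref{sec:appCom}; the trivial-bundle case is handled via the Koszul resolution on an ambient projective bundle exactly as you describe. The one point to make more explicit is that Principle~\ref{prin:verif} is formulated for Deligne pairings, so before invoking it on the $\lambda$-side you must first write down a specific polarized $\lambda$-combination (the paper's \eqref{eq:furred}, obtained via Proposition~\ref{prop:partpol}) and verify it is additive in $L_0$ --- this is Lemma~\ref{lma:temp2}, a nontrivial check whose proof already uses the inductive hypothesis on $D$.
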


For the definition of $\RR$, we refer to Subsection~\ref{subsec:rrp}.
For the precise meaning of compatibility with short exact sequences, see~Subsection~\ref{subsec:compati}.

\subsection{Riemann-Roch polynomials}\label{subsec:rrp}
Let $(f:X\rightarrow S)\in \mathcal{F}^n$. Assume that $f$ is l.c.i..
Let $E$ be a vector bundle on $X$.

Define $\RR(E)$ as the Riemann-Roch polynomial of $E$, namely
\[
\RR(E)=\RR_{X/S}(E):=\Td(T_{X/S})\ch(E).
\]
And $\RR^{a}(E)$ denotes the homogeneous part of degree $a$ of $\RR(E)$.

We explain the meaning of $\langle  \RR_{X/S}^{n+1}(E) \rangle_{X/S}$.

 \cite{Knud02}

Moreover, it makes sense to talk about $\langle [\RR^{n+1}(E^\bullet)\rangle_{X/S}$ for any perfect complex $E^\bullet$ for the same reason.

\begin{proposition}\label{prop:RRrest}
$f$, $E$ as above. Let $D$ be an effective relative divisor on $X$. Then
\[
i^*\RR_{X/S}(E)=\RR_{D/S}(E)\sum_{j=0}^{\infty}\frac{(-1)^j}{j!}  B_j i^*c_1^j(\mathcal{O}_X(D)),
\]
where $i:D\rightarrow X$ is the inclusion morphism.
\end{proposition}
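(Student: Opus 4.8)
The plan is to reduce the identity to the multiplicativity of the Todd class along the regular immersion $i:D\hookrightarrow X$, followed by an elementary generating-function computation of the Todd class of the normal line bundle. Throughout, I read $\RR_{D/S}(E)$ as $\RR_{D/S}(i^*E)=\Td(T_{D/S})\ch(i^*E)$, i.e.\ $E$ is implicitly restricted to $D$.

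First I would record the virtual normal bundle relation. Since $D$ is an effective relative Cartier divisor, $i$ is a regular immersion, hence l.c.i., so the composite $f\circ i:D\to S$ is again l.c.i.\ and $\RR_{D/S}$ makes sense. The transitivity triangle of cotangent complexes for $D\xrightarrow{i}X\xrightarrow{f}S$, together with the identification $L_{D/X}\simeq N_{D/X}^\vee[1]$ for the regular immersion $i$, yields in $K(D)$ the virtual relation
\[
i^*T_{X/S}=T_{D/S}+N_{D/X},
\]
where $N_{D/X}=\mathcal{O}_D(D)$ is the normal line bundle and $T_{X/S}$, $T_{D/S}$ denote the virtual relative tangent classes of the l.c.i.\ morphisms. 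Applying the multiplicativity of the Todd class, $\Td(\alpha+\beta)=\Td(\alpha)\Td(\beta)$, and using $i^*\ch(E)=\ch(i^*E)$, I obtain
\[
i^*\RR_{X/S}(E)=\Td(T_{D/S})\,\Td(N_{D/X})\,\ch(i^*E)=\RR_{D/S}(E)\,\Td(N_{D/X}).
\]
Thus the statement reduces to computing $\Td(N_{D/X})$, the Todd class of a line bundle with $c_1(N_{D/X})=i^*c_1(\mathcal{O}_X(D))=:x$.

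Finally I would invoke the defining generating function of the Bernoulli numbers, $\frac{t}{e^{t}-1}=\sum_{j\ge 0}B_j\frac{t^j}{j!}$ (recorded in Appendix~\ref{sec:appCom}). Since the Todd class of a line bundle of first Chern class $x$ is $\frac{x}{1-e^{-x}}$, the substitution $t=-x$ gives
\[
\Td(N_{D/X})=\frac{x}{1-e^{-x}}=\frac{-x}{e^{-x}-1}=\sum_{j=0}^{\infty}\frac{(-1)^j}{j!}B_j\,x^j,
\]
which is precisely the claimed factor after substituting $x=i^*c_1(\mathcal{O}_X(D))$; the leading terms $1+\tfrac12 x+\tfrac1{12}x^2+\cdots$ confirm the sign convention against $B_0=1$, $B_1=-\tfrac12$, $B_2=\tfrac16$.

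The only genuinely delicate point is the first step: in the l.c.i.\ setting $T_{X/S}$ and $T_{D/S}$ are merely virtual classes, so the additivity $i^*T_{X/S}=T_{D/S}+N_{D/X}$ must be justified at the level of cotangent complexes rather than via an honest exact sequence of vector bundles. Everything downstream of this is the formal multiplicativity of $\Td$ together with the Bernoulli identity.
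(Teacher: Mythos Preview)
Your argument is correct and matches the paper's: one uses the exact sequence (or, in the l.c.i.\ setting, the virtual $K$-theory identity) $i^*T_{X/S}=T_{D/S}+\mathcal{O}_D(D)$ to obtain $i^*\Td(T_{X/S})=\Td(T_{D/S})\Td(\mathcal{O}_D(D))$, and then expands $\Td$ of a line bundle via the Bernoulli generating function. You are in fact more careful than the paper on the first step, since the paper simply writes a short exact sequence of bundles rather than the virtual identity appropriate to the l.c.i.\ setting; note however that Appendix~\ref{sec:appCom} fixes $B_1=+\tfrac12$, not $-\tfrac12$, so your sanity check of the leading coefficients is against the opposite convention from the one the paper adopts.
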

Here $B_j$ denotes the Bernoulli numbers. See Appendix~\ref{sec:appCom} for the sign convention.
\begin{proof}
Recall that there is an exact sequence
\[
0\rightarrow T_{D/S}\rightarrow i^*T_{X/S} \rightarrow \mathcal{O}_D(D)\rightarrow 0.
\]
Hence we find
\[
i^*\Td(T_{X/S})=\Td(T_{D/S})\Td(\mathcal{O}_D(D)).
\]
\end{proof}
\begin{remark}
	Observe that the restriction of $f$ to a non-zero effective relative Cartier divisor $D$ is still l.c.i.. This follows easily from the fact that a flat l.c.i. morphism is syntomic. (\cite{stacks-project} Tag~069K)
\end{remark}

\subsection{Reduction to line bundles}
We observe that the proof can be reduced to the case where $E=L$ is a line bundle. 

Let $r>1$ be the rank of $E$. We make an induction on $r$.  

Now consider the projectivilization of $E$, 
$\pi:\mathbb{P}E \rightarrow X$.

Note that on $\mathbb{P}E$, there is a short exact sequence of vector bundles
\[
0\rightarrow \mathcal{O}(-1)\rightarrow \pi^*E\rightarrow Q\rightarrow 0,
\]
where $Q$ is the universal quotient bundle.

We construct the isomorphism in (\ref{eq:DRR}) for $\pi^* E$ from those for $\mathcal{O}(-1)$ and $Q$.  

While we calculate
\[
\RR_{\mathbb{P}E/S}(\pi^*E)=\Td(T_{\mathbb{P}E/X})\pi^*\RR_{X/S}(E).
\] 
Hence
\[
\lambda_{X/S}(E)\cn \lambda_{\mathbb{P}E/S}(\pi^*E)
\cn \langle \RR^{n+r}(\pi^*E)\rangle_{\mathbb{P}E/S}
\cn  \langle \RR^{n+1}_{X/S}(E)\rangle_{X/S},
\]
where the first isomorphism follows from the usual projection formula, the second isomorphism is already constructed. While the third follows from 
the calculation
\[
\RR^{n+r}_{\mathbb{P}E/S}(\pi^*E)=\Td^{r-1}(T_{\mathbb{P}E/X})\pi^*\RR^{n+1}_{X/S}(E)
=c_1^{r-1}(\mathcal{O}_{\mathbb{P}E}(1))\pi^*\RR^{n+1}_{X/S}(E).
\]
So the third isomorphism follows from the definition of Deligne pairing of general  Chern polynomials, see (\ref{eq:extdelg}). 

We shall prove in Subsection~\ref{subsec:compati} that  (\ref{eq:DRR}) is compatible with exact sequences.

We assume throughout this section that $L$ is a line bundle on $X$.

\subsection{The case of projective spaces}\label{subsec:projsp}
At first, let us prove the theorem for projective space bundles.

Let $\pi:\mathbb{P}^n_S\rightarrow S$ be a projective space over $S$. We are going to prove that
\begin{equation}\label{eq:proj}
\lambda_{\mathbb{P}^n_S/S}(L)\cn \langle [ \Td(T_{\pi})\ch(L)]^{n+1}  \rangle_{\mathbb{P}^n_S/S}.
\end{equation}

We first deal with the case where $L=\mathcal{O}(d)$ for some $d\in \mathbb{Z}$.

In this case, $\lambda(L)$ is trivial.
While clearly RHS is a constant multiple of 
\[
\langle c_1^{n+1}(\mathcal{O}(1))\rangle_{\mathbb{P}^n_S/S},
\]
which is trivial by induction on $n$.

So now we may assume that $L=\pi^* L'$ for some line bundle $L'$ on $S$.
In this case, LHS of (\ref{eq:proj}) is canonically identified with $L'$ by projection formula. As for RHS, expand it according to degrees of both parts. Clearly, the only possibly non-trivial part is given by
\[
\langle\Td^n(T_{\pi})\pi^* L' \rangle_{\mathbb{P}^n_S/S}\cn L'
\]
by the $\gamma$ isomorphism.

\subsection{$n=0$ case}
When $n=0$, we claim that (\ref{eq:DRR1}) holds with some $U_{X/S}$. 

Actually, in this case, we have
\[
\lambda_{X/S}(L)-\lambda_{X/S}(0)\cn \langle L \rangle_{X/S} \cn \langle [\Td(T_{X/S})\ch(L)]^1 \rangle_{X/S}-\langle \Td^1(T_{X/S})\rangle_{X/S}.
\]

This implies that $U_{X/S}$ does not depend on $L$.

Now consider an embedding
\[
\begin{tikzcd}
X \arrow[r,"i",hookrightarrow] \arrow[dr,"f"]& \mathbb{P}V \arrow[d,"\pi"] \\ 
&S
\end{tikzcd},
\]
where $V$ is a vector bundle on $S$ of rank $N+1$.

Then we find
\[
\lambda_{X/S}(0)\cn \lambda_{\mathbb{P}V/S}(i_*\mathcal{O}_X) \cn  \langle \RR^{N+1}(i_*\mathcal{O}_X) \rangle_{\mathbb{P}V/S}\cn \langle \RR^{n+1}(0) \rangle_{X/S},
\]
where the last isomorphism follows from the $\beta$-isomorphism of Deligne pairings using the Koszul resolution of $i_*\mathcal{O}_X$. 

In the following, we shall assume that $n>0$ and the theorem is already proved for dimension up to $n-1$.
\subsection{Inversion of (\ref{eq:DRR})}
The first step is to prove: There is a canonical isomorphism that commutes with base extension
\begin{equation}\label{eq:DRR1}
	\lambda(L)\cn \langle \RR^{n+1}(L) \rangle+U_{X/S},
\end{equation}
where $U_{X/S}$ is a line bundle on $S$ that does not depend on the choice of $L$.

Observe that
\[
\RR^{n+1}(aL)=\sum_{i=0}^{n+1} a^i \frac{1}{i!}\Td^{n+1-i}(T_{X/S})c_1^i(L).
\]
So 
\[
\Td^{n+1-i}(T_{X/S})c_1^i(L)=
i!\sum_{j=0}^{n+1}A^{n+1}_{i,j}\RR^{n+1}(jL).
\]
See Appendix~\ref{sec:appCom} for the definition of $A^{n+1}$.

Therefore, we conclude that
\begin{lemma}\label{lma:reductiondrr}
(\ref{eq:DRR1}) is equivalent to the following statement: there is a functorial isomorphism
\begin{equation}\label{eq:reductiondrr}
\langle \Td^{n+1-i}(T_{X/S})c_1^i(L) \rangle_{X/S}\cn
i!\sum_{j=0}^{n+1}A^{n+1}_{i,j}\lambda_{X/S}(jL)
\end{equation}
for any $0< i\leq n+1$.
\end{lemma}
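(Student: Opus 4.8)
The plan is to treat the lemma as a formal equivalence driven by two facts: the $\mathbb{Q}$-linearity of the generalized Deligne pairing $\langle\cdot\rangle$ on Chern polynomials of degree $n+1$, and Theorem~\ref{thm:lambdarelation}, which detects exactly when an integral combination $\sum_j c_j\lambda(jL)$ is canonically trivial for all $L$, namely when $\sum_j c_j\binom{j}{k}=0$ for $0\le k\le n+1$. Since we work in $\PIC_{\mathbb{Q}}(S)$, this criterion extends $\mathbb{Q}$-linearly (clear denominators); and since the powers $j^{i'}$ and the binomials $\binom{j}{k}$ ($0\le i',k\le n+1$) span the same space of polynomials of degree $\le n+1$, the criterion may equally be tested against the powers, i.e. $\sum_j c_j\,j^{i'}=0$ for $0\le i'\le n+1$. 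The bridge between the two sides is the inversion displayed just above the statement, $\Td^{n+1-i}(T_{X/S})c_1^i(L)=i!\sum_{j}A^{n+1}_{i,j}\RR^{n+1}(jL)$, whose coefficients are characterized in Appendix~\ref{sec:appCom} by the Vandermonde-type relations $\sum_{j}A^{n+1}_{i,j}\,j^{i'}=\delta_{i,i'}$ for $0\le i,i'\le n+1$; in particular $\sum_j A^{n+1}_{i,j}=0$ for every $i>0$.

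For the direction \eqref{eq:DRR1} $\Rightarrow$ \eqref{eq:reductiondrr}, I would apply the Deligne pairing to the inversion formula and use linearity to get
\[
\langle \Td^{n+1-i}(T_{X/S})c_1^i(L)\rangle_{X/S}\cn i!\sum_{j}A^{n+1}_{i,j}\langle\RR^{n+1}(jL)\rangle_{X/S}.
\]
Substituting \eqref{eq:DRR1} in the rearranged form $\langle\RR^{n+1}(jL)\rangle_{X/S}\cn\lambda(jL)-U_{X/S}$ turns the right-hand side into $i!\sum_j A^{n+1}_{i,j}\lambda(jL)$ minus $i!\left(\sum_j A^{n+1}_{i,j}\right)U_{X/S}$; the latter coefficient is $\sum_j A^{n+1}_{i,j}j^0=\delta_{i,0}$, which vanishes for $i>0$, leaving precisely \eqref{eq:reductiondrr}.

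For the converse, I would expand
\[
\langle\RR^{n+1}(L)\rangle_{X/S}\cn\langle\Td^{n+1}(T_{X/S})\rangle_{X/S}+\sum_{i=1}^{n+1}\frac{1}{i!}\langle\Td^{n+1-i}(T_{X/S})c_1^i(L)\rangle_{X/S},
\]
isolating the $i=0$ summand, which does not involve $L$. Feeding \eqref{eq:reductiondrr} into each $i>0$ summand replaces it by $\sum_j A^{n+1}_{i,j}\lambda(jL)$, so it remains to identify $\sum_{i=1}^{n+1}\sum_j A^{n+1}_{i,j}\lambda(jL)$ with $\lambda(L)-\lambda(0)$. For this I would apply Theorem~\ref{thm:lambdarelation} to the combination $\sum_{i=1}^{n+1}\sum_j A^{n+1}_{i,j}\lambda(jL)-\lambda(L)+\lambda(0)$: testing its coefficients against $j^{i'}$ gives $\sum_{i=1}^{n+1}\delta_{i,i'}-1^{i'}+0^{i'}$, which equals $0-1+1=0$ at $i'=0$ and $1-1+0=0$ for $1\le i'\le n+1$, so the combination is canonically trivial. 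This yields \eqref{eq:DRR1} with $U_{X/S}=\lambda(0)-\langle\Td^{n+1}(T_{X/S})\rangle_{X/S}$, manifestly independent of $L$.

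I expect the main obstacle to be not the homological bookkeeping but the combinatorics packaged in the coefficients $A^{n+1}_{i,j}$: everything hinges on the relations $\sum_j A^{n+1}_{i,j}j^{i'}=\delta_{i,i'}$, i.e. on $A^{n+1}$ being the genuine inverse of the Vandermonde-type matrix recording the polynomial dependence of $\RR^{n+1}(jL)$ on $j$, and these identities are exactly what is deferred to Appendix~\ref{sec:appCom}. A secondary point requiring care is functoriality: all the isomorphisms invoked (linearity of $\langle\cdot\rangle$, the trivializations of Theorem~\ref{thm:lambdarelation}, and \eqref{eq:DRR1} itself) are natural and base-change compatible, so the assembled composite is again canonical and commutes with base extension; one should nonetheless confirm that combining them introduces no choice, which it does not, since each individual step is canonical.
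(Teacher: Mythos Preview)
Your proposal is correct and follows the same route as the paper: both rest on the Vandermonde inversion $\Td^{n+1-i}(T_{X/S})c_1^i(L)=i!\sum_j A^{n+1}_{i,j}\RR^{n+1}(jL)$ recorded just above the lemma, after which the equivalence is a matter of linear algebra in $\PIC_{\mathbb{Q}}(S)$ combined with Theorem~\ref{thm:lambdarelation}. The paper in fact treats the lemma as an immediate consequence of this inversion and gives no further argument; your write-up supplies the details (the vanishing $\sum_j A^{n+1}_{i,j}=0$ for $i>0$ that kills $U_{X/S}$, and the explicit check via Theorem~\ref{thm:lambdarelation} that $\sum_{i\ge 1}\sum_j A^{n+1}_{i,j}\lambda(jL)\cn\lambda(L)-\lambda(0)$), which the paper leaves implicit.
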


We observe the following lemma
\begin{lemma}\label{lma:temp1}
Assume $n>0$, $i>0$. Then (\ref{eq:reductiondrr}) holds if the following holds:
\begin{equation}\label{eq:furred}
\begin{split}
\langle \Td^{n+1-i}(T_{X/S})c_1(L_0)c_1^{i-1}(L)\rangle_{X/S}\cn &
(i-1)!\sum_{j=0}^{i-1}A^{i-1}_{i-1,j} \sum_{k=0}^{n+1}A^{n+1}_{i,k}\lambda(kL_0+kjL)\\
&-
\frac{i-1}{2} i! \sum_{k=0}^{n+1}A^{n+1}_{i,k}\lambda(kL)
\end{split}
\end{equation}
for any line bundle $L_0$ on $X$.
\end{lemma}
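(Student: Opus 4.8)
The plan is to deduce (\ref{eq:reductiondrr}) from (\ref{eq:furred}) by formal polarization: I specialize $L_0 = L$. With this substitution the left-hand side of (\ref{eq:furred}) becomes $\langle \Td^{n+1-i}(T_{X/S})c_1^i(L)\rangle_{X/S}$, which is exactly the left-hand side of (\ref{eq:reductiondrr}). So the entire content of the lemma is that, after setting $L_0 = L$, the right-hand side of (\ref{eq:furred}) is canonically isomorphic to that of (\ref{eq:reductiondrr}). Both are $\mathbb{Q}$-linear combinations of the line bundles $\lambda(mL)$, so their difference has the form $\sum_m c_m\lambda(mL)$, and I can invoke Theorem~\ref{thm:lambdarelation}: such a combination is canonically---hence functorially and compatibly with base change---trivial as soon as $\sum_m c_m\binom{m}{p} = 0$ for all $0\le p\le n+1$.

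Because $\{\binom{m}{p}\}_{0\le p\le n+1}$ and $\{m^k\}_{0\le k\le n+1}$ span the same space of polynomials, it is equivalent to verify that the ``moments'' $\mu_k := \sum_m c_m m^k$ vanish for $0\le k\le n+1$. The only input I need about the constants is that $A^{N}_{p,\cdot}$ inverts the relevant Vandermonde system; unwinding the definition in Appendix~\ref{sec:appCom} against $\RR^{n+1}(jL) = \sum_k \frac{j^k}{k!}\Td^{n+1-k}(T_{X/S})c_1^k(L)$ yields
\[
\sum_{j=0}^{N} A^{N}_{p,j}\, j^k = \delta_{pk}, \qquad 0\le k\le N.
\]
Writing $R_1$ and $R_2$ for the right-hand sides of (\ref{eq:reductiondrr}) and of (\ref{eq:furred}) at $L_0=L$, and applying the displayed relation with $N=n+1$ to the inner $k'$-sums, I obtain $\mu_k(R_1) = i!\,\delta_{ik}$ and
\[
\mu_k(R_2) = (i-1)!\,\delta_{ik}\sum_{j=0}^{i-1}A^{i-1}_{i-1,j}(1+j)^k \;-\; \tfrac{i-1}{2}\,i!\,\delta_{ik}.
\]
Hence for $k\neq i$ all moments already agree (both vanish), and only the case $k=i$ survives.

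The crux will be this single case. Expanding $(1+j)^i = \sum_{p=0}^i \binom{i}{p}j^p$ and using $\sum_j A^{i-1}_{i-1,j}j^p = \delta_{(i-1)p}$ for $0\le p\le i-1$, every term drops except $p=i-1$ and $p=i$, so $\sum_j A^{i-1}_{i-1,j}(1+j)^i = i + \sum_j A^{i-1}_{i-1,j}j^i$. The leftover moment $\sum_j A^{i-1}_{i-1,j}j^i$ lies exactly one degree above the inversion range; identifying $A^{i-1}_{i-1,j}$ with the top finite-difference weights, it equals the Stirling number $S(i,i-1) = \binom{i}{2}$. Therefore $\mu_i(R_2) = (i-1)!\bigl(i + \binom{i}{2}\bigr) - \frac{i-1}{2}i! = i! = \mu_i(R_1)$, the $\binom{i}{2}$ being cancelled precisely by the correction term $-\frac{i-1}{2}i!\sum_k A^{n+1}_{i,k}\lambda(kL)$ in (\ref{eq:furred})---which is exactly why that term is present. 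All moments up to degree $n+1$ thus agree, so $R_1 - R_2$ is canonically trivial and the lemma follows.

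I expect the combinatorics to be the only genuine obstacle: the identity $\sum_j A^{i-1}_{i-1,j}j^i = \binom{i}{2}$ and the exact matching of the two correction coefficients are precisely the sort of fact gathered, with Wu Baojun's help, in Appendix~\ref{sec:appCom}. Everything else is formal---the specialization $L_0=L$ is immediate, and the step from ``all moments vanish'' to ``canonically trivial'' is a direct application of Theorem~\ref{thm:lambdarelation}, whose trivializations are automatically functorial and base-change compatible, so no further compatibilities need to be checked by hand.
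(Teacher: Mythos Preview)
Your proof is correct and follows the same strategy as the paper: specialize $L_0=L$ so that the left-hand sides of (\ref{eq:furred}) and (\ref{eq:reductiondrr}) coincide, then invoke Theorem~\ref{thm:lambdarelation} to reduce the comparison of right-hand sides to a combinatorial identity. The only difference is cosmetic: the paper checks the conditions of Theorem~\ref{thm:lambdarelation} in the binomial basis $\binom{m}{\ell}$ via Proposition~\ref{prop:A} (producing the factors $B_{\ell,i}$), whereas you pass to the equivalent power basis $m^k$ and use directly that $A^{n+1}$ inverts the Vandermonde matrix, which makes all moments with $k\neq i$ vanish on sight and isolates the single identity $\sum_j A^{i-1}_{i-1,j}(1+j)^i=i+\binom{i}{2}$ (Proposition~\ref{prop:binom1} with $n=i-1$, $k=i$); this is a mild streamlining of the same computation.
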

\begin{proof}
Recall that by partial polarization (Proposition~\ref{prop:partpol}), we have
\[
c_1(L_0)c_1^{i-1}(L)=\frac{1}{i}\sum_{j=0}^{i-1}A^{i-1}_{i-1,j}(c_1^{i}(L_0+j L))-\frac{i-1}{2}c_1^i(L),
\]
where $L_0$ is a line bundle on $X$.

We have
\begin{align*}
\langle \Td^{n+1-i}(T_{X/S})c_1(L_0)c_1^{i-1}(L)\rangle_{X/S}\cn &\frac{1}{i}\sum_{j=0}^{i-1}A^{i-1}_{i-1,j} \langle \Td^{n+1-i}(T_{X/S}) c_1^{i}(L_0+j L)\rangle_{X/S}\\
&-\frac{i-1}{2}\langle \Td^{n+1-i}(T_{X/S}) c_1^{i}(L)\rangle_{X/S}.
\end{align*}
We claim that (\ref{eq:reductiondrr}) is implied by (\ref{eq:furred}).

Actually, it suffices to show when $L_0=L$, the RHS of (\ref{eq:furred}) is naturally isomorphic to the RHS of (\ref{eq:reductiondrr}). According to Theorem~\ref{thm:lambdarelation}, this is further equivalent to 
\[
\sum_{j=0}^{i-1}A^{i-1}_{i-1,j}  \sum_{k=0}^{n+1}A^{n+1}_{i,k}\binom{k+kj}{\ell}=
 \frac{i(i+1)}{2}\sum_{k=0}^{n+1}A^{n+1}_{i,k}\binom{k}{\ell},
\]
for any $0\leq \ell\leq n+1$.

According to Proposition~\ref{prop:A}, LHS is equal to
\[
\sum_{j=0}^{i-1}\frac{1}{(i-1)!}\binom{i-1}{j}(-1)^{i-1-j}(j+1)^i B_{\ell,i}.
\]
And according to Proposition~\ref{prop:binom1}, this is equal to
\[
\frac{i(i+1)}{2}B_{\ell,i}.
\]
For the RHS, Proposition~\ref{prop:A} shows that it is equal to
\[
\frac{i(i+1)}{2}B_{\ell,i}.
\]
The equality follows.
\end{proof}

\begin{lemma}\label{lma:temp2}
For $n>0$, $i>0$, the RHS of (\ref{eq:furred}) is additive with respect to $L_0$.
\end{lemma}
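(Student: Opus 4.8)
The plan is to strip off the part of the right-hand side of (\ref{eq:furred}) that does not involve $L_0$ and to recognise the remainder as a $\mathbb{Q}$-combination of determinant line bundles to which the finite-difference machinery of Theorem~\ref{thm:lambdarelation} applies. Write the right-hand side as $G(L_0)+C$, where
\[
G(L_0)=(i-1)!\sum_{j=0}^{i-1}A^{i-1}_{i-1,j}\sum_{k=0}^{n+1}A^{n+1}_{i,k}\lambda(kL_0+kjL)
\]
and $C=-\tfrac{i-1}{2}\,i!\sum_{k}A^{n+1}_{i,k}\lambda(kL)$ is independent of $L_0$. Since $C$ cancels in any second difference, proving additivity in $L_0$ amounts to producing, naturally in $(L_0,L_0')$, a canonical trivialisation of the bi-difference
\[
\Phi(L_0,L_0'):=G(L_0+L_0')+G(0)-G(L_0)-G(L_0').
\]
Substituting the formula for $G$, the bundle $\Phi$ is an explicit $\mathbb{Q}$-combination of terms $\lambda(aL_0+bL_0'+cL)$ with $(a,b,c)$ running over $(k,k,kj),(0,0,kj),(k,0,kj),(0,k,kj)$ carrying signs $+,+,-,-$ and overall coefficient $(i-1)!\,A^{i-1}_{i-1,j}A^{n+1}_{i,k}$.

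The key tool will be a several-variable form of Theorem~\ref{thm:lambdarelation}: for fixed $L_0,L_0',L$ the bundle $\lambda(aL_0+bL_0'+cL)$ depends on the integer vector $(a,b,c)$ as a polynomial functor of total degree at most $n+1$, so that a combination $\sum_\alpha c_\alpha\lambda(a_\alpha L_0+b_\alpha L_0'+c_\alpha L)$ is canonically and naturally trivial once $\sum_\alpha c_\alpha a_\alpha^{p}b_\alpha^{q}c_\alpha^{r}=0$ for all $p+q+r\le n+1$. I expect this multivariable criterion to be the main obstacle, as it is not contained in the single-variable Theorem~\ref{thm:lambdarelation}; I would establish it either by repeating verbatim the universal-extension induction of that theorem, or by restricting to one-parameter sublattices and invoking Principle~\ref{prin:verif}, in either case checking that the resulting trivialisations are compatible so as to glue to a natural isomorphism in $(L_0,L_0')$. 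Granting the criterion, triviality of $\Phi$ reduces to a numerical statement: the $L$-weight $c=kj$ is common to all four points, so the monomial $a^pb^q$ evaluates on $(k,k),(0,0),(k,0),(0,k)$ to $k^pk^q+0^p0^q-k^p0^q-0^pk^q$, which vanishes unless $p\ge1$ and $q\ge1$, in which case it equals $k^{p+q}$.

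It therefore remains to verify, for all $p,q\ge1$ and $r\ge0$ with $p+q+r\le n+1$, the identity
\[
\sum_{j=0}^{i-1}\sum_{k=0}^{n+1}A^{i-1}_{i-1,j}A^{n+1}_{i,k}\,k^{\,p+q+r}\,j^{\,r}=0 .
\]
This sum factors as $\bigl(\sum_{k}A^{n+1}_{i,k}k^{\,p+q+r}\bigr)\bigl(\sum_{j}A^{i-1}_{i-1,j}j^{\,r}\bigr)$, and by the inversion property defining the coefficients $A$ (Proposition~\ref{prop:A}) one has $\sum_{k}A^{n+1}_{i,k}k^{m}=\delta_{i,m}$ for $0\le m\le n+1$ and $\sum_{j}A^{i-1}_{i-1,j}j^{r}=\delta_{i-1,r}$ for $0\le r\le i-1$. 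The first factor vanishes unless $p+q+r=i$; but then $r=i-(p+q)\le i-2$ since $p,q\ge1$, so $r\ne i-1$ and, as $0\le r\le i-1$, the second factor vanishes. Hence the product is $0$ throughout the required range, which trivialises $\Phi$ and gives the asserted additivity, with naturality and the symmetry and coherence of the additive structure inherited from the canonical trivialisations furnished by the multivariable form of Theorem~\ref{thm:lambdarelation}.
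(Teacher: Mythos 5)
Your reduction of additivity to the bi-difference $\Phi(L_0,L_0')=G(L_0+L_0')+G(0)-G(L_0)-G(L_0')$ is where the argument breaks. Writing $F(L_0)=G(L_0)+C$ for the right-hand side of (\ref{eq:furred}), additivity means $F(L_0+ L_0')\cn F(L_0)+F(L_0')$, and this is equivalent to the vanishing of the second difference \emph{together with} a canonical trivialisation $F(\mathcal{O}_X)\cn\mathcal{O}_S$, i.e.\ $G(0)+C\cn 0$. Your step ``since $C$ cancels in any second difference'' silently discards exactly this second condition, and it is not a formality: by Theorem~\ref{thm:lambdarelation}, the triviality of
\[
G(0)+C=(i-1)!\sum_{j,k}A^{i-1}_{i-1,j}A^{n+1}_{i,k}\lambda(kjL)-\frac{i-1}{2}\,i!\sum_k A^{n+1}_{i,k}\lambda(kL)
\]
amounts to the identity $\sum_{j,k}A^{i-1}_{i-1,j}A^{n+1}_{i,k}\binom{kj}{r}=\frac{i(i-1)}{2}\sum_k A^{n+1}_{i,k}\binom{k}{r}$ for $0\le r\le n+1$, which is precisely Claim~2 (equation (\ref{eq:temp4})) in the paper's proof, and its verification needs the overflow case of Proposition~\ref{prop:binom1}, namely $\sum_j\binom{i-1}{j}(-1)^{i-1-j}j^{i}=(i-1)!\,\frac{i(i-1)}{2}$. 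Your delta-function bookkeeping ($\sum_kA^{n+1}_{i,k}k^m=\delta_{i,m}$ and $\sum_jA^{i-1}_{i-1,j}j^r=\delta_{i-1,r}$ for $r\le i-1$) never reaches this case, so the constant part of the right-hand side is never matched against anything. The omission matters for the intended use of the lemma: with only second-difference vanishing, writing a general $L_0$ as a difference of sufficiently ample bundles yields (\ref{eq:furred}) only up to the untreated error term $F(\mathcal{O}_X)$. The fix stays inside your own framework: apply your multivariable criterion to the full additivity defect $G(L_0+L_0')-G(L_0)-G(L_0')-C$ rather than to $\Phi$; the monomials with $p,q\ge1$ vanish exactly as you computed, while the new monomials with $p=q=0$, $r=i$ produce $(i-1)!\sum_jA^{i-1}_{i-1,j}j^{i}$ against $\frac{i-1}{2}\,i!$, and these agree by Proposition~\ref{prop:binom1}.

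A second, smaller issue is the multivariable form of Theorem~\ref{thm:lambdarelation}, on which everything rests and which you correctly flag as unproved. Your first suggestion (rerunning the universal-extension induction) does go through: after restricting one bundle to a divisor via Lemma~\ref{lma:lambdaind}, the inner sums $\sum_{\ell=1}^{a}\ell^{q}$ are polynomials in $a$ of degree $q+1$ with no constant term, so the degree-$\le n$ numerical conditions on the divisor follow from the degree-$\le n+1$ conditions upstairs. But this is genuine additional work, and the paper avoids it entirely: its proof of the analogous vanishing (Claim~1, equation (\ref{eq:temp3})) expands the $\lambda_{D/S}$-terms using the \emph{inductive hypothesis} — the Deligne--Riemann--Roch isomorphism in relative dimension $n-1$ — and then checks coefficients of the resulting Deligne pairings with Propositions~\ref{prop:binom1} and~\ref{prop:powersum}. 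So even after repairing the constant-term gap, your route requires proving a general polynomiality statement that the paper's induction never needs.
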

\begin{proof}
Actually, let $L'$ be a sufficiently ample line bundle on $X$ such that $L_0+L'$ is sufficiently ample. Assume $D$ is a section of $L'$.
Then
\[
\lambda(kL'+kL_0+kjL)
=\sum_{i=1}^{k} \lambda_{D/S}(iL'+kL_0+kjL)+ \lambda(kL_0+kjL).
\]
And similarly
\[
\lambda(kL'+kjL)
=\sum_{i=1}^{k} \lambda_{D/S}(iL'+kjL)+ \lambda(kjL).
\]
Hence the additivity of \ref{eq:furred} is implied by the following claims.
 
Claim 1: 
\begin{equation}\label{eq:temp3}
\sum_{j=0}^{i-1}\frac{1}{(i-1)!}\binom{i-1}{j}(-1)^{i-1-j}\sum_{k=0}^{n+1} A^{n+1}_{i,k}
\sum_{\ell=1}^{k}\left( 
\lambda_{D/S}(\ell L'+kjL+kL_0)-
\lambda_{D/S}(\ell L'+kjL)
\right)=0.
\end{equation}

Claim 2:
\begin{equation}\label{eq:temp4}
\sum_{j=0}^{i-1}\frac{1}{(i-1)!}\binom{i-1}{j}(-1)^{i-1-j}\sum_{k=0}^{n+1} A^{n+1}_{i,k}\binom{kj}{r}=\frac{i(i-1)}{2}\sum_{k=0}^{n+1}A^{n+1}_{i,k}\binom{k}{r}
\end{equation}
for any $0\leq r\leq n+1$.

We first prove Claim 2. According to Proposition~\ref{prop:A}, 
\[
\sum_{k=0}^{n+1} A^{n+1}_{i,k}\binom{kj}{r}=j^i B_{r,i}.
\]
Hence according to Proposition~\ref{prop:binom1}, LHS of (\ref{eq:temp4}) is equal to
\[
\frac{i(i-1)}{2}B_{r,i}
\]
RHS of (\ref{eq:temp4}) is the same by Proposition~\ref{prop:binom1}. Claim 2 follows.

In order to prove Claim 1, we apply inductive hypothesis to $D$, we then find 
\[
\lambda_{D/S}(\ell L'+kjL+kL_0)
=
\sum_{a+b+c\leq n}\frac{\ell^a k^{b+c} j^b}{a!b!c!}\langle \Td^{n-a-b-c}(T_{D/S})c_1^a(L') c_1^b(L) c_1^c(L_0)  \rangle_{D/S}.
\]

It suffices to show that all terms with $c>0$ has coefficients $0$.

Using Proposition~\ref{prop:binom1}, we find that the coefficients is non-zero only when $b\geq i-1$. By Proposition~\ref{prop:powersum}, we know that the sum of $\ell^a$ with respect to $\ell$ is a polynomial of degree $a+1$ in $k$ without constant term. So the sum with respect to $k$ is non-zero only when $b+c\leq i-1$. Each term with $c>0$ has coefficient $0$. Claim 1 follows. 
\end{proof}

\begin{lemma}\label{lma:temp3}
When $D$ is a regular section of $L_0$, the RHS of (\ref{eq:furred}) can be written as 
\begin{equation}\label{eq:ffred}
\sum_{j=0}^{i-1}\binom{i-1}{j}(-1)^{i-1-j} \sum_{k=0}^{n+1}A^{n+1}_{i,k}\sum_{\ell=1}^{k} \lambda_D(\ell L_0+kjL).
\end{equation}
\end{lemma}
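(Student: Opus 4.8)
The goal of Lemma~\ref{lma:temp3} is to rewrite the right-hand side of~(\ref{eq:furred}) under the hypothesis that $D$ is a regular section of $L_0$, so that the formula becomes the purely $(n-1)$-dimensional expression~(\ref{eq:ffred}) living on $D$. The plan is to apply Lemma~\ref{lma:lambdaind} (the $\lambda$-additivity under taking divisors) to each $\lambda$-term appearing on the right-hand side of~(\ref{eq:furred}), exactly as was done for a sufficiently ample auxiliary bundle in the proof of Lemma~\ref{lma:temp2}, but now using the regular section $D$ of $L_0$ itself rather than an auxiliary $L'$.

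First I would treat the two families of $\lambda$-terms in~(\ref{eq:furred}) separately. For the first sum, apply Lemma~\ref{lma:lambdaind} repeatedly to $\lambda(kL_0+kjL)$: since $D$ is a regular section of $L_0$, one peels off the divisor $D$ one copy at a time, producing
\[
\lambda(kL_0+kjL)\cn \sum_{\ell=1}^{k}\lambda_D(\ell L_0+kjL)+\lambda(kjL),
\]
where I have used $N_{D/X}\cong\mathcal{O}_X(D)|_D$ so that the restricted bundle in Lemma~\ref{lma:lambdaind} contributes the shift by $\ell L_0$. The leftover terms $\lambda(kjL)$ are precisely the terms carrying no factor of $L_0$.

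The key step is then a combinatorial cancellation: I must show that the second sum in~(\ref{eq:furred}), namely $-\tfrac{i-1}{2}i!\sum_k A^{n+1}_{i,k}\lambda(kL)$, together with the residual $\lambda(kjL)$-terms just extracted, cancels completely, leaving only the $\lambda_D$-terms of~(\ref{eq:ffred}). Collecting the coefficient of each $\lambda(kL)$ reduces this to an identity on the constants $A^{n+1}_{i,k}$ and the binomials $\binom{i-1}{j}$, which by Theorem~\ref{thm:lambdarelation} I only need to verify after pairing against $\binom{\,\cdot\,}{r}$. This is essentially Claim~2 of Lemma~\ref{lma:temp2}, i.e. the identity~(\ref{eq:temp4}), specialized appropriately: using Proposition~\ref{prop:A} to evaluate $\sum_k A^{n+1}_{i,k}\binom{kj}{r}=j^iB_{r,i}$ and Proposition~\ref{prop:binom1} to sum the alternating binomials against $j^i$, the residual $L_0$-free terms match the $-\tfrac{i-1}{2}$ correction term exactly.

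I expect the main obstacle to be bookkeeping rather than conceptual: one must track the telescoping indices $\ell$ carefully through the iterated application of Lemma~\ref{lma:lambdaind}, be consistent about the normal-bundle twist $N_{D/X}$ versus the restriction $L_0|_D$, and ensure the combinatorial reduction is applied only after the $L_0$-free pieces have been isolated. Since the additivity in $L_0$ of the right-hand side is already guaranteed by Lemma~\ref{lma:temp2}, and the relevant coefficient identities~(\ref{eq:temp4}) are already in hand, the argument should close without introducing any new combinatorial facts beyond those recorded in Appendix~\ref{sec:appCom}.
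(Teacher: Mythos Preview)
Your proposal is correct and follows essentially the same route as the paper: decompose each $\lambda(kL_0+kjL)$ via Lemma~\ref{lma:lambdaind} into the $\lambda_D$-sum plus a residual $\lambda(kjL)$, then kill the $L_0$-free part together with the $-\tfrac{i-1}{2}i!\sum_k A^{n+1}_{i,k}\lambda(kL)$ term using Theorem~\ref{thm:lambdarelation} and the identities from Proposition~\ref{prop:A} and Proposition~\ref{prop:binom1}. Your observation that the needed coefficient identity is exactly~(\ref{eq:temp4}) (up to clearing the factor $(i-1)!$) is correct; the paper simply rewrites and re-proves it as a separate display rather than invoking Claim~2 of Lemma~\ref{lma:temp2}.
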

\begin{proof}
We write 
\[
\lambda(kL_0+kjL)=\sum_{\ell=1}^{k} \lambda_D(\ell L_0+kjL)+\lambda(kjL).
\]
Let $I$ be the part of RHS of (\ref{eq:furred}) that involves $\lambda_D$. Then
\[
I=\sum_{j=0}^{i-1}\binom{i-1}{j}(-1)^{i-1-j} \sum_{k=0}^{n+1}A^{n+1}_{i,k}\sum_{\ell=1}^{k} \lambda_D(\ell L_0+kjL).
\]
Let $II$ be the remaining term. Then
\[
II=\sum_{j=0}^{i-1}\binom{i-1}{j}(-1)^{i-1-j} \sum_{k=0}^{n+1}A^{n+1}_{i,k} \lambda(kjL)-\frac{i-1}{2}i! \sum_{k=0}^{n+1}A^{n+1}_{i,k}\lambda(kL)
\]
We claim that $II$ is canonically trivial. To prove this, by Theorem~\ref{thm:lambdarelation}, it suffices to show that for any $0\leq b\leq n+1$, we have
\begin{equation}\label{eq:temp1}
\sum_{j=0}^{i-1}\binom{i-1}{j}(-1)^{i-1-j} \sum_{k=0}^{n+1}A^{n+1}_{i,k} \binom{kj}{b}=\frac{i-1}{2}i! \sum_{k=0}^{n+1}A^{n+1}_{i,k}\binom{k}{b}.
\end{equation}
But we known that 
\[
\sum_{k=0}^{n+1}A^{n+1}_{i,k} \binom{kj}{b}=j^i B_{b,i}.
\]
Hence the LHS of (\ref{eq:temp1}) becomes
\[
\frac{i-1}{2}i! B_{b,i}
\]
using Proposition~\ref{prop:binom1}.

Similarly RHS of  (\ref{eq:temp1}) is equal to
\[
\frac{i-i}{2}i! B_{b,i}.
\]
\end{proof}
\begin{lemma}\label{lma:temp4}
The LHS of (\ref{eq:furred}) is equal to
\begin{equation}\label{eq:asum}
\sum_{a=0}^{n+1-i}\frac{(-1)^a}{a!}B_a \langle \Td^{n+1-i-a}(T_{D/S})c_1^a(L_0) c_1^{i-1}(L)\rangle_{D/S}
\end{equation}
and is equal to (\ref{eq:ffred}).
\end{lemma}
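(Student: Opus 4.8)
The statement packages two independent assertions: that the left-hand side of (\ref{eq:furred}) equals (\ref{eq:asum}), and that (\ref{eq:asum}) equals (\ref{eq:ffred}). I would prove the first geometrically and the second combinatorially. For the first equality, the plan is to use that $D$ is a regular section of $L_0$, so $L_0\cong\mathcal{O}_X(D)$ and the left-hand side of (\ref{eq:furred}) may be written as $\langle c_1(\mathcal{O}_X(D))\,Q\rangle_{X/S}$ with $Q=\Td^{n+1-i}(T_{X/S})c_1^{i-1}(L)$, a Chern polynomial of degree $n$. Applying the divisor-restriction isomorphism for Chern-polynomial Deligne pairings (the proposition giving $\langle c_1(\mathcal{O}_X(D))\,Q\rangle_{X/S}\cn\langle Q\rangle_{D/S}$) reduces this to $\langle i^*Q\rangle_{D/S}$, where $i:D\hookrightarrow X$ is the inclusion.

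It then remains to compute $i^*Q$. Applying Proposition~\ref{prop:RRrest} with $E=\mathcal{O}_X$ gives the restriction formula $i^*\Td(T_{X/S})=\Td(T_{D/S})\sum_{a\ge 0}\frac{(-1)^a}{a!}B_a\,i^*c_1^a(\mathcal{O}_X(D))$; extracting the degree $n+1-i$ part and multiplying by $c_1^{i-1}(L|_D)$ yields $i^*Q=\sum_{a=0}^{n+1-i}\frac{(-1)^a}{a!}B_a\,\Td^{n+1-i-a}(T_{D/S})c_1^a(L_0)c_1^{i-1}(L)$ (restrictions understood). Substituting this into $\langle i^*Q\rangle_{D/S}$ and using additivity of the Deligne pairing produces exactly (\ref{eq:asum}). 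This step is formal once the Todd restriction is in hand.

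For the second equality I would invoke the inductive hypothesis, namely the Deligne--Riemann--Roch theorem in relative dimension $n-1$, which gives $\lambda_{D/S}(\ell L_0+kjL)\cn\langle\RR^n_{D/S}(\ell L_0+kjL)\rangle_{D/S}$. Expanding $\RR^n_{D/S}(\ell L_0+kjL)=\sum_{p+q+m=n}\frac{\ell^p(kj)^q}{p!\,q!}\Td^m(T_{D/S})c_1^p(L_0)c_1^q(L)$ and feeding it into (\ref{eq:ffred}), I would collect the coefficient of each pairing $\langle\Td^m(T_{D/S})c_1^p(L_0)c_1^q(L)\rangle_{D/S}$ with $m=n-p-q$. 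That coefficient factors as $\frac{1}{p!\,q!}$ times a product of three sums: the inner power sum $\sum_{\ell=1}^{k}\ell^p$, the moment $\sum_{k=0}^{n+1}A^{n+1}_{i,k}k^q(\,\cdot\,)$, and the alternating binomial sum $\sum_{j=0}^{i-1}\binom{i-1}{j}(-1)^{i-1-j}j^q$.

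The decisive and most delicate step is to evaluate these sums and match them against (\ref{eq:asum}). Here I would use Proposition~\ref{prop:powersum} to write $\sum_{\ell=1}^{k}\ell^p$ as a polynomial in $k$ of degree $p+1$ with \emph{vanishing constant term}, Proposition~\ref{prop:A} in the form of the moment identity $\sum_{k=0}^{n+1}A^{n+1}_{i,k}P(k)=[k^i]P(k)$ valid for $\deg P\le n+1$ (legitimate since $p+q\le n$), and Proposition~\ref{prop:binom1} to evaluate the $j$-sum. The crux is a twofold vanishing of the off-diagonal terms: for $q<i-1$ the alternating binomial sum over $j$ vanishes, while for $q>i-1$ the $k$-moment equals $[k^{i-q}]\sum_{\ell=1}^{k}\ell^p$, which is zero because $i-q\le 0$ and the power sum has no constant term. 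Only $q=i-1$ survives, where the $j$-sum equals $(i-1)!$ and the $k$-moment equals $[k^1]\sum_{\ell=1}^{k}\ell^p=(-1)^pB_p$, yielding the coefficient $\frac{(-1)^p}{p!}B_p$ of (\ref{eq:asum}) with $a=p$. I expect the bookkeeping of these three nested sums, together with keeping the Bernoulli sign convention consistent with Proposition~\ref{prop:RRrest}, to be the main obstacle; everything else is routine.
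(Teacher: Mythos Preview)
Your proposal is correct and follows essentially the same route as the paper. For the first assertion you use the divisor-restriction isomorphism for Chern-polynomial Deligne pairings together with Proposition~\ref{prop:RRrest}, exactly as the paper does (the paper compresses this into the single sentence ``follows from the proof of Proposition~\ref{prop:RRrest}''). For the second assertion you apply the inductive hypothesis to $D$, expand, and eliminate all terms with $q\neq i-1$ via Proposition~\ref{prop:binom1} (for $q<i-1$) and the moment identity from Proposition~\ref{prop:A} together with the vanishing constant term of the Faulhaber polynomial from Proposition~\ref{prop:powersum} (for $q\geq i$); this is precisely the paper's argument with $b$ in place of your $q$.

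The only cosmetic difference is in the very last step: you extract the coefficient of $k^1$ in $\sum_{\ell=1}^{k}\ell^p$ directly from Faulhaber's formula, whereas the paper carries the full Faulhaber expansion one step further and packages the resulting verification as the Bernoulli identity $(a+1)B_a=\sum_{j=0}^a\binom{a+1}{j}B_j(a+1-j)(-1)^j$ of Proposition~\ref{prop:bern}. Your shortcut is slightly cleaner, but note that your claimed value $[k^1]\sum_{\ell=1}^{k}\ell^p=(-1)^pB_p$ is off by the sign $(-1)^p$ under the paper's convention $B_1=1/2$ (Proposition~\ref{prop:powersum} gives $B_p$ directly); since $B_p=0$ for odd $p\geq 3$ this only affects the $p=1$ term, and you already flag the Bernoulli sign bookkeeping as the point to watch, so the argument goes through once the convention is fixed.
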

\begin{proof}
The expression of LHS of (\ref{eq:furred}) follows from the proof of Proposition~\ref{prop:RRrest}.

We apply inductive hypothesis to $D$, then we find (\ref{eq:ffred}) is equal to
\[
\sum_{j=0}^{i-1} \binom{i-1}{j}(-1)^{i-1-j}\sum_{k=0}^{n+1}A^{n+1}_{i,k}\sum_{\ell=1}^{k}\sum_{a=0}^n \sum_{b=0}^{n-a}\frac{1}{a! b!}\ell^a k^b j^b
\langle \Td^{n-a-b}(T_{D/S})c_1^a(L_0)c_1^b(L) \rangle_{D/S}.
\]

We rewrite the part without $U$ as
\begin{equation}\label{eq:temp2}
\sum_{a+b\leq n}\frac{1}{a!b!} \left(\sum_{j=0}^{i-1} \binom{i-1}{j}(-1)^{i-1-j}j^b\right)
\left(\sum_{k=0}^{n+1}A^{n+1}_{i,k} k^b \sum_{\ell=1}^{k}\ell^a  \right) \langle \Td^{n-a-b}(T_{D/S})c_1^a(L_0)c_1^b(L) \rangle_{D/S}.
\end{equation}

First observe that when $b<i-1$, the first bracket is zero by Proposition~\ref{prop:binom1}.

When $n\geq b> i$, the second bracket is clearly zero.
This is also true when $b=i$, as according to Proposition~\ref{prop:powersum}, $\sum_{\ell=0}^{k-1}\ell^a$ is a polynomial in $k$ of degree $a+1$ and the constant term is clearly $0$.

So (\ref{eq:temp2}) becomes
\[
\sum_{a=0}^{n-1+i} \frac{1}{a!}\left(\sum_{k=1}^{n}A^{n+1}_{i,k} k^{i-1} \sum_{\ell=0}^{k-1}\ell^a  \right) \langle \Td^{n-a-i+1}(T_{D/S})c_1^a(L_0)c_1^{i-1}(L) \rangle_{D/S}.
\]
According to Proposition~\ref{prop:powersum}, the bracket is equal to 
\[
\frac{1}{a+1}\sum_{j=0}^a \binom{a+1}{j}B_j (a+1-j)(-1)^{a-j}.
\]
So we finally reduce to prove
\[
(a+1)B_a=\sum_{j=0}^a \binom{a+1}{j}B_j (a+1-j)(-1)^{j}.
\]
For this identity, see Proposition~\ref{prop:bern}.
\end{proof}

So now, we have proved (\ref{eq:DRR1}).

The problem now is to find a trivialization of
\[
U_{X/S}=\lambda_{X/S}(0)-\langle \Td^{n+1}(T_{X/S}) \rangle_{X/S}.
\]

This has been done when $n=0$. Our proof in that case works for arbitrary $n$ without any change.

This finishes the proof of Theorem~\ref{thm:DRR}.

\subsection{Explicit formula for the Deligne-Riemann-Roch isomorphism}

Note that our proof is indeed constructive, so we have an explicit construction of the Deligne-Riemann-Roch isomorphism. We make it more explicit for further use.

Let $f,X,S$ be as in Theorem~\ref{thm:DRR}.

In order to avoid horrible notations, we make the following convention:
We regard the universal relations in Theorem~\ref{thm:lambdarelation} as equalities. Similar convention applies to the $\alpha$, $\beta$, $\gamma$, $\delta$ isomorphisms for Deligne pairings. We omit the notations for pull-back to universal extensions and for descending from the universal extension. Moreover, we use $Z(L)$ for the universal divisor corresponding to $L$.

Write 
\[
I_{X}(L):\lambda_{X/S}(L)\cn \langle \RR^{n+1}(L)  \rangle_{X/S}
\]
for the isomorphism established in (\ref{eq:DRR}).

\begin{proposition}\label{prop:explic}
    Let $i>0$, $n>0$, then for any line bundle $L$ on $X$, we have
	\[
	I_{X}(L)=\sum_{i=0}^{n+1}\frac{1}{i!}\sum_{j=0}^{i-1}\binom{i-1}{j}(-1)^{i-1-j}\sum_{k=0}^{n+1}A^{n+1}_{i,k}\sum_{\ell=1}^{k}  \left( I_{Z(L+L_0)}(\ell L_0+(kj+\ell )L)-I_{Z(L_0)}(\ell L_0+kjL)\right),
	\]
	where $L_0$ is a line bundle on $X$ such that $L_0$ and $L+L_0$ are both sufficiently ample.
\end{proposition}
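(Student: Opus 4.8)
The plan is to read the formula off the constructive proof of Theorem~\ref{thm:DRR}, using the bookkeeping conventions fixed above under which the relations of Theorem~\ref{thm:lambdarelation} and the $\alpha,\beta,\gamma,\delta$ isomorphisms are treated as equalities and all pull-back and descent maps are suppressed. Under these conventions the content of the proposition is not a fresh computation but a transcription of the chain of canonical isomorphisms that already produced $I_X(L)$; the real work is to display that chain and to check that the auxiliary $\lambda_{X/S}$-terms created along the way drop out.

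First I would expand the target along degrees, $\langle \RR^{n+1}(L)\rangle_{X/S}=\sum_{i=0}^{n+1}\frac{1}{i!}\langle \Td^{n+1-i}(T_{X/S})c_1^i(L)\rangle_{X/S}$, and treat each graded piece separately; the $i=0$ piece is $L$-independent and is accounted for by the trivialization of $U_{X/S}$ exactly as in the $n=0$ case, which is why it contributes the empty inner sum in the stated formula. For $i>0$ I would write $c_1^i(L)=\bigl(c_1(L+L_0)-c_1(L_0)\bigr)c_1^{i-1}(L)$ and apply the polarized identity~(\ref{eq:furred}) of Lemma~\ref{lma:temp1} to both $\langle \Td^{n+1-i}(T_{X/S})c_1(L+L_0)c_1^{i-1}(L)\rangle_{X/S}$ and $\langle \Td^{n+1-i}(T_{X/S})c_1(L_0)c_1^{i-1}(L)\rangle_{X/S}$. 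Taking the difference, the identical residual summand $-\frac{i-1}{2}i!\sum_k A^{n+1}_{i,k}\lambda(kL)$ cancels, and using $(i-1)!\,A^{i-1}_{i-1,j}=\binom{i-1}{j}(-1)^{i-1-j}$ from Proposition~\ref{prop:A} leaves
\[
\langle \Td^{n+1-i}(T_{X/S})c_1^i(L)\rangle_{X/S}=\sum_{j=0}^{i-1}\binom{i-1}{j}(-1)^{i-1-j}\sum_{k=0}^{n+1}A^{n+1}_{i,k}\bigl(\lambda(k(L+L_0)+kjL)-\lambda(kL_0+kjL)\bigr).
\]
By Principle~\ref{prin:verif} I may take $L_0$ and $L+L_0$ sufficiently ample with prescribed regular sections $Z(L_0)$ and $Z(L+L_0)$, so iterating Lemma~\ref{lma:lambdaind} telescopes each $\lambda$ into $\sum_{\ell=1}^{k}\lambda_{Z(\cdot)}(\cdots)+\lambda(kjL)$; the $\lambda(kjL)$ remainders again cancel in the difference, and the surviving divisor terms carry arguments $\ell(L+L_0)+kjL=\ell L_0+(kj+\ell)L$ and $\ell L_0+kjL$.

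Finally I would invoke the inductive hypothesis on the relative-dimension-$(n-1)$ families $Z(L_0),Z(L+L_0)\in\mathcal{F}^{n-1}$ to replace each $\lambda_{Z(\cdot)}(\cdots)$ by the corresponding isomorphism $I_{Z(\cdot)}(\cdots)$, and reweight the $i$-th graded piece by $\frac{1}{i!}$; this produces exactly the double alternating sum of the statement. The step I expect to be the main obstacle is the coherence of the bookkeeping rather than any single identity: one must verify that the two telescoping reductions are taken against compatible universal-extension data, so that their difference really is a difference of $I_{Z(\cdot)}$-isomorphisms, and that the suppressed descent isomorphisms of Theorem~\ref{thm:desc} together with the sign normalizations of the existence proof match on the two sides. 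Independence of the formula from the auxiliary $L_0$ is the other point requiring care; it follows the pattern of the additivity argument of Lemma~\ref{lma:temp2}, replacing $L_0$ by $L_0+H$ and invoking Theorem~\ref{thm:lambdarelation} together with Propositions~\ref{prop:A} and~\ref{prop:binom1} to see that the extra contributions are canonically trivial.
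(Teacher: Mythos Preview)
Your proposal is correct and is precisely what the paper means by ``This follows by examining the proof.'' You have traced the construction of $I_X(L)$ through Lemmas~\ref{lma:temp1}--\ref{lma:temp4}: writing $c_1^i(L)=c_1(L+L_0)c_1^{i-1}(L)-c_1(L_0)c_1^{i-1}(L)$ is exactly the additivity reduction (Lemma~\ref{lma:temp2}) applied to the first slot of (\ref{eq:furred}) with the sufficiently ample pair $L_0,\,L+L_0$, and the subsequent telescoping via Lemma~\ref{lma:lambdaind} together with the cancellation of the $\lambda(kjL)$ residue reproduces (\ref{eq:ffred}) for each of the two divisors, so after invoking the inductive hypothesis on $Z(L_0)$ and $Z(L+L_0)$ the stated formula drops out. (One small bibliographic point: the identity $(i-1)!\,A^{i-1}_{i-1,j}=\binom{i-1}{j}(-1)^{i-1-j}$ is Proposition~\ref{prop:vand}(2) rather than Proposition~\ref{prop:A}.)
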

This follows by examing the proof. 
\subsection{Compatiblity of Deligne-Riemann-Roch isomorphisms with exact sequences}\label{subsec:compati}
In this subsection, we prove that the Deligne-Riemann-Roch isomorphism (\ref{eq:DRR}) is compatible with short exact sequences.

To be more precise, let $f,X,S$ be as in Theorem~\ref{thm:DRR}, let 
\[
0\rightarrow F\rightarrow E\rightarrow G \rightarrow 0
\]
be an exact sequence of vector bundles on $X$. 

Then there is then a diagram
\[
\begin{tikzcd}
\lambda(E)\arrow[d,"\sim"]\arrow[r,"\sim"] &\langle \RR^{n+1}(E)\rangle \arrow[d,"\sim"]\\
\lambda(F)+\lambda(G)\arrow[r,"\sim"] &\langle \RR^{n+1}(F)\rangle+\langle \RR^{n+1}(G)\rangle
\end{tikzcd},
\]
where horizontal morphisms are the Deligne-Riemann-Roch isomorphisms.

We ask if there is a canonical isomorphism
\begin{equation}\label{eq:Iadd}
I_{X/S}(E)\cn I_{X/S}(F)+I_{X/S}(G),
\end{equation}
where $I_{X/S}(E)$ is the isomorphism in (\ref{eq:DRR}).

First consider the case where $X=\mathbb{P}V$ for some vector bundle $V$ on $S$. According to the explicit calculation made in Subsection~\ref{subsec:projsp}, (\ref{eq:Iadd}) can be easily constructed.

Then the construction in \cite{KM77} extends $I$ and (\ref{eq:Iadd}) to all perfect complexes.

Now in general, consider an embedding of $S$ schemes $i:X\hookrightarrow \mathbb{P}^N_S$. 
We may assume that all of $E$, $F$, $G$ admit a canonical complete flags by pulling back to the flag variety.
There is diagram:
\[
\begin{tikzcd}
I_{X/S}(F)\arrow[d,"\sim"]\arrow[r,dashrightarrow] & I_{X/S}(E)+ I_{X/S}(G)\arrow[d,"\sim"]\\
I_{\mathbb{P}^N_S/S}(i_*F)\arrow[r,"\sim"] &I_{\mathbb{P}^N_S/S}(i_*E)+I_{\mathbb{P}^N_S/S}(i_*G)
\end{tikzcd},
\]
where vertical isomorphisms are defined as follows: clearly it suffices to construct the vertical isomorphisms for line bundles. Let $L$ be a line bundle on $X$, the desired isomorphism is the canonical one induced by the canonical isomorphism
\[
\lambda_{\mathbb{P}^N_S/S}(i_*L)\cn \lambda_{X/S}(L).
\]

The desired compatibility isomorphism follows by completing the dotted morphism so that the diagram commutes.

We also note that the isomorphisms (\ref{eq:Iadd}) satisfy some obvious coherence conditions themselves.
\section{Refinement of Deligne-Riemann-Roch theorem}\label{sec:met}
\subsection{Definition of metrics on Deligne pairings}\label{subsec:metdel}
In this part, we shall define a metric on the Deligne pairings following \cite{Elk90}. Proofs will be omitted and we refer to \cite{Elk90} for proof.

Let $(f:X\rightarrow S)\in \mathcal{F}^n$. Assume furthermore that $S$, $X$ are complex quasi-projective varieties. Let $\hat{L}_i=(L_i,\varphi_i)$ be Hermitian line bundles on $X$. We shall define a metric on 
\[
\langle L_0,\dots,L_n\rangle_{X/S}.
\]
This line bundle with this particular metric will be denoted as
\[
\langle \hat{L}_0,\dots,\hat{L}_n\rangle_{X/S}.
\]
The metric is defined by induction on $n$. When $n=0$, we use the usual metric on the norm.

Now assume $n>0$. 
We first proceed in the case where $L_0$ is trivial, say $\hat{L}_0=(\mathcal{O}_X,\varphi)$. In this case, we require the following $\gamma$ isomorphism to be isometric
\[
\langle \mathcal{O}_X, L_1,\dots, L_n\rangle_{X/S}\cn
\mathcal{O}_S,
\] 
where the metric on the RHS is given by
\[
\int_{X/S}\varphi c_1(L_1)\wedge\cdots\wedge c_1(L_n).
\]
This is always continuous by a theorem of Stoll. See Appendix~\ref{sec:appFib}

By requiring the additivity isomorphism to be isometric, it suffices then to require the following $\beta$ isomorphism to be isometric:
\[
\langle \mathcal{O}_X(D), L_1,\dots, L_n\rangle_{X/S}\cn \langle  L_1,\dots, L_n\rangle_{D/S},
\] 
where $D$ is a non-zero effective relative Cartier divisor on $X$, the metric on $\mathcal{O}_X(D)$ is the canonical singular Hermitian metric. Notice that by the additivity property and the anomaly formula, putting a $L^1$-metric on $\mathcal{O}_X(D)$ makes sense.

These special cases then generate the metric on all Deligne pairings. 
We refer to \cite{Elk90} for the well-defineness.
The resulting metrics are in general only continuous. 
(See Appendix~\ref{sec:appFib})

Let $\hat{E}$ be a Hermitian vector bundle on $X$. The \emph{Segre forms} $s_i(\hat{E})$ of $\hat{E}$ can be defined using
\[
\sum_{i\geq 0}s_i(\hat{E})\sum_{i\geq 0}c_i(\hat{E})=1.
\]

An equivalent definition can be given by
\[
s_a(\hat{E})=\pi_*(c_1(\mathcal{O}_{\mathbb{P}E}(1))^{r+a}),
\]
where $\pi:\mathbb{P}E \rightarrow X$ and the canonical metric on $\mathcal{O}_{\mathbb{P}E}(1)$ is used. For the proof of the equivalence of the two definitions, we refer to \cite{Mou04} Proposition~6.

Now for the general definition of metrics on Deligne pairings. Let $\hat{E}_i$ ($i=1,\dots,m$) be Hermitian vector bundles on $X$. Let $P(c_j(E_i))$ be a homogeneous polynomial of degree $n+1$. 

Set $\mathbb{P}=\mathbb{P}(E_1)\times \cdots \times \mathbb{P}(E_m)$.

We define a metric on $\langle P(c_j(E_i)) \rangle_{X/S}$ by taking the linear extension of 
\[
\langle s_{k_1}(\hat{E}_1)\dots s_{k_m}(\hat{E}_m)\rangle_{X/S}:=\langle \mathcal{O}_{\mathbb{P}E_1}(1)\{r_1+k_1\},\dots,\mathcal{O}_{\mathbb{P}E_m}(1)\{r_m+k_m\} \rangle_{\mathbb{P}/S},
\] 
where $s_j$ denotes the $j$-th Segre class, $\sum_j k_j=n+1$, \{$a$\} denotes repeating $a$-times and canonical metrics on $\mathcal{O}(1)$ bundles are used.

We write the line bundle with this specific metric as
\[
\langle P(c_j(\hat{E}_i))\rangle_{X/S}.
\]

We list the basic properties of this metric. $Q$ will denote a homogeneous Chern polynomial of proper degrees.
\begin{theorem}
1.
\[
c_1(\langle P(c_j(\hat{E}_i))\rangle_{X/S})=\int_{X/S}P(c_j(E_i)).
\]
2. Let $\hat{L}_i$ be Hermitian line bundles, then
\[
\langle c_1(\hat{L}_0)\dots c_1(\hat{L}_n))\rangle_{X/S}=\langle \hat{L}_0,\dots,\hat{L}_n\rangle_{X/S}.
\]
3. Let $\mathcal{E}=(0\rightarrow E\rightarrow F\rightarrow G\rightarrow 0)$ be a short exact sequence. Let $P_k$ be $c_k$ or $\Td^k$. Let $\tilde{P}$ be the Bott-Chern secondary class corresponding to $P$, then there is a natural isometric isomorphism
\[
\langle P_k(\hat{F}) Q\rangle_{X/S}\cn \left\langle \sum_{i+j=k}P_i(\hat{E})P_j(\hat{G}) Q \right\rangle_{X/S}\otimes \left(\mathcal{O}_S,\int_{X/S}\tilde{P}_k(\mathcal{E})Q\right).
\]
\end{theorem}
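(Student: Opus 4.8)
The plan is to reduce all three statements to the line-bundle Deligne pairings with their canonical metrics, since by the defining formula \eqref{eq:extdelg} (and its metric refinement) every metrized pairing $\langle P(c_j(\hat E_i))\rangle_{X/S}$ is, by linear extension, assembled out of pairings $\langle \mathcal{O}_{\mathbb{P}E_1}(1),\dots\rangle_{\mathbb{P}/S}$ of line bundles on the product $\mathbb{P}=\mathbb{P}(E_1)\times\cdots\times\mathbb{P}(E_m)$ carrying their canonical metrics. Parts 1 and 2 are then bookkeeping comparisons of the two inductive definitions of the metric, while Part 3 carries the real content, namely the Bott--Chern transgression.

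For Part 1 I would argue by induction on $n$, using the theorem of Stoll recalled in Appendix~\ref{sec:appFib}, which guarantees that fibre integration of continuous forms is continuous and commutes with $\ddc$. In the base case $n=0$ the first Chern form of the norm is the fibrewise pushforward, which is classical. For the inductive step, the metric is pinned down by requiring the $\gamma$-isomorphism $\langle \mathcal{O}_X,L_1,\dots,L_n\rangle_{X/S}\cn\mathcal{O}_S$ to be isometric with the metric $\exp\!\big(-\int_{X/S}\varphi\, c_1(L_1)\wedge\cdots\wedge c_1(L_n)\big)$ and the $\beta$-isomorphism to be isometric; computing $\ddc$ of the resulting metric and invoking the Poincar\'e--Lelong equation yields $c_1=\int_{X/S}c_1(L_0)\wedge\cdots\wedge c_1(L_n)$. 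The passage to a general homogeneous $P$ then follows from the projection-formula definition $s_a(\hat E)=\pi_*\,c_1(\mathcal{O}_{\mathbb{P}E}(1))^{r+a}$ together with the push--pull compatibility of fibre integration.

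Part 2 amounts to checking that the algebraic isomorphism $\langle L_0,\dots,L_n\rangle_{X/S}\cn\langle c_1(L_0)\cdots c_1(L_n)\rangle_{X/S}$ established in Subsection~\ref{subsec:exis} is in fact isometric. I would unwind both inductive definitions of the metric: for a line bundle $L$ one has $\mathbb{P}(L)=X$ and $c_1(\mathcal{O}_{\mathbb{P}L}(1))=c_1(L)$, so the Segre-class construction of \eqref{eq:extdelg} collapses onto the direct construction of Subsection~\ref{subsec:metdel}. The base case $n=0$ matches because both use the norm metric, and the inductive steps match because both are fixed by the same isometric $\gamma$- and $\beta$-isomorphisms; hence the canonical isomorphism is an isometry.

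Part 3 is the crux, and the argument is to be totally parallel to the corresponding d\'evissage in Gillet--Soul\'e. The underlying algebraic isomorphism is Proposition~\ref{prop:seqdelign}, so the two sides are canonically isomorphic line bundles and the content is the identification of the ratio of their metrics with $\exp\!\big(\int_{X/S}\tilde P_k(\mathcal{E})Q\big)$. By Part 1 and the transgression equation $\ddc\,\tilde P_k(\mathcal{E})=\sum_{i+j=k}P_i(\hat E)P_j(\hat G)-P_k(\hat F)$ (in the sign convention of Appendix~\ref{sec:appCom}), the first Chern forms of the two metrized pairings differ exactly by $\ddc\int_{X/S}\tilde P_k(\mathcal{E})Q$, so the two metrics differ by a factor whose $\ddc\log$ is prescribed; this already gives the claim up to a pluriharmonic ambiguity, which on a non-compact $S$ is genuinely non-trivial and cannot be resolved by curvature alone. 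To pin down the actual metric I would deform the Hermitian data---replacing $\hat F$ by the family of metrics interpolating between the given one and the orthogonal direct sum $\hat E\oplus\hat G$, exactly as in the construction of $\tilde P_k(\mathcal{E})$---and compute the variation of the Elkik metric on the Deligne pairing, integrating it against $Q$ over the fibres. The main obstacle is precisely this last point: one needs the explicit anomaly formula describing how the Elkik metric on a Deligne pairing varies with the input Hermitian metrics (a double-transgression integral over $X/S$), together with careful tracking of signs and normalisations so that the integrated variation reproduces $\int_{X/S}\tilde P_k(\mathcal{E})Q$ on the nose rather than merely up to a constant. Once the anomaly formula is in hand, the splitting principle reduces $P_k$ (whether $c_k$ or $\Td^k$) to the line-bundle pairings handled by Part 2, and the coherence with the tensor factor $\big(\mathcal{O}_S,\int_{X/S}\tilde P_k(\mathcal{E})Q\big)$ follows.
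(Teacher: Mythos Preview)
The paper does not actually prove this theorem: immediately after stating it, the author writes ``For the proof of these facts and further properties of the metric, we refer to \cite{Elk90}.'' So there is no in-paper argument to compare against; the paper treats all three parts as imported from Elkik.

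Your outline is consistent with the strategy one finds in Elkik's papers and is a reasonable sketch. A few remarks. For Part~1, your induction via the isometric $\gamma$- and $\beta$-isomorphisms together with Poincar\'e--Lelong is exactly the mechanism Elkik uses, and the passage to general $P$ through the Segre definition $s_a(\hat E)=\pi_*c_1(\mathcal{O}_{\mathbb{P}E}(1))^{r+a}$ is the intended one. For Part~2, the observation that $\mathbb{P}(L)=X$ collapses the Segre construction to the direct one is correct and is what makes the comparison go through. For Part~3, you have correctly located the substantive point: the curvature computation only determines the metric up to a pluriharmonic defect, and one must invoke the anomaly formula for the Elkik metric (variation of $\langle\cdot\rangle$ under change of Hermitian data) to identify the discrepancy with $\int_{X/S}\tilde P_k(\mathcal{E})Q$ on the nose. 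That anomaly formula is itself proved in \cite{Elk90}, so your sketch is circular unless you are prepared to reprove it; but as an outline of the logical structure it is accurate.
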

For the proof of these facts and further properties of the metric, we refer to \cite{Elk90}.
\begin{remark}
Note that a priori, we only know that LHS in 1 is a distribution.
\end{remark}
\subsection{Deligne-Riemann-Roch theorem with metric}
Define the following series
\[
R(x):=\sum_{n\geq 1, n\text{ odd}} \left((1+\frac{1}{2}+\dots+\frac{1}{n})\zeta(-n)+2\zeta'(-n)\right)\frac{x^n}{n!}.
\]

Now we can prove a refined version of Theorem~\ref{thm:DRR}.

\begin{theorem}\label{thm:DRRm}
Let $X,S$ be complex quasi-projective manifolds. Let $f:X\rightarrow S$ be a projective smooth K\"{a}hler fibration of pure relative dimension $n$. Let $\omega$ be a relative K\"{a}hler metric on $T_{X/S}$. Write $\hat{T}=(T_{X/S},\omega)$.
Let $\hat{E}$ be a Hermitian vector bundle on $X$, then there is a functorial isometric isomorphism
\begin{equation}\label{eq:RRm}
\hat{\lambda}_{X/S}(\hat{E})\cn \langle\RR(\hat{E}) \rangle_{X/S}+\left(\mathcal{O}_S,\int_{X/S} \left[\RR(\hat{E})\mathrm{R}(\hat{T})\right]^n \right).
\end{equation}
\end{theorem}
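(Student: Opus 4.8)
The plan is to follow the dévissage method of Gillet--Soulé, treating the whole identity as a statement about the \emph{defect} between two metrics on the single line bundle whose existence is guaranteed by Theorem~\ref{thm:DRR}. Indeed, by Theorem~\ref{thm:DRR} the underlying holomorphic line bundles of $\hat{\lambda}_{X/S}(\hat{E})$ and $\langle \RR(\hat{E})\rangle_{X/S}$ are canonically isomorphic, so the Quillen metric and the Elkik metric (Subsection~\ref{subsec:metdel}) differ by a metric on the trivial bundle, that is, by a metrized trivial line bundle $\left(\mathcal{O}_S,\Phi_{X/S}(\hat{E})\right)$ for a function $\Phi_{X/S}(\hat{E})$ on $S$, and the assertion (\ref{eq:RRm}) is exactly $\Phi_{X/S}(\hat{E})=\int_{X/S}\left[\RR(\hat{E})\mathrm{R}(\hat{T})\right]^n$. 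First I would record the formal properties of $\Phi$: it is compatible with base change, since both metrics are, and its curvature is computable from the Bismut--Gillet--Soulé curvature formula for the Quillen metric together with the curvature formula $c_1(\langle \RR(\hat{E})\rangle_{X/S})=\int_{X/S}\RR^{n+1}(E)$ for the Elkik metric. Because $\RR^{n+1}=[\Td(\hat{T})\ch(\hat{E})]^{n+1}$ matches the Bismut--Gillet--Soulé form exactly, the two curvatures cancel, so $\ddc\Phi=0$ and $\Phi$ is pluriharmonic; in particular its value is pinned down by any single computation in a universal family, which is what makes a purely inductive identification possible.

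Second I would carry out the dévissage to line bundles, exactly mirroring Subsection~\ref{subsec:compati} and the reduction used in Theorem~\ref{thm:DRR}. Using the projectivization $\pi:\mathbb{P}E\to X$ and the tautological exact sequence, both the Quillen metric (via the anomaly formula and its exact-sequence behavior) and the Elkik metric (via part~3 of the metric theorem in Subsection~\ref{subsec:metdel}) transform under a short exact sequence by the fibre integral of the appropriate Bott--Chern secondary class. The crucial point is that $\mathrm{R}$ is an \emph{additive} genus, so the Bott--Chern contributions produced on the two sides of (\ref{eq:RRm}) are forced to agree; hence $\Phi$ is additive in $\hat{E}$ modulo these matching terms, and it suffices to treat the case where $\hat{E}=\hat{L}$ is a Hermitian line bundle. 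I would then repeat the universal-extension argument behind Principle~\ref{prin:verif}: after a base extension making an auxiliary $L_0$ carry a regular section $D$, the isometric $\beta$ isomorphism reduces both metrics to relative dimension $n-1$ along $D$, while Proposition~\ref{prop:RRrest} supplies the matching expansion of $\RR$ in powers of $c_1(\mathcal{O}_X(D))$. Combined with the explicit formula of Proposition~\ref{prop:explic}, this sets up an induction on $n$ in which the combinatorial identities of Appendix~\ref{sec:appCom} guarantee that the recursion satisfied by $\Phi$ coincides term by term with the recursion satisfied by $\int_{X/S}[\RR(\hat{E})\mathrm{R}(\hat{T})]^n$.

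The base case is where the genuinely analytic content enters, and this is the step I expect to be the main obstacle. After the reduction one is left to evaluate the defect for a universal family --- in practice a projective space bundle $\mathbb{P}^m_S\to S$ with a twisting line bundle --- where $\lambda_{X/S}$ carries its Quillen metric built from the holomorphic analytic torsion of the fibres. The coefficients $(1+\frac{1}{2}+\dots+\frac{1}{n})\zeta(-n)+2\zeta'(-n)$ defining $\mathrm{R}$ arise precisely from the explicit spectral computation of this torsion on $\mathbb{P}^m$, equivalently from the value of the equivariant torsion that singles out the derivative of the zeta function, and matching them against $\mathrm{R}(\hat{T})$ is the heart of the argument. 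I would import this computation from \cite{GS91} and the Bismut--Gillet--Soulé theory rather than redo it, so that the remaining work is the bookkeeping already arranged in the previous steps. Finally, the conclusion is an \emph{isometry} rather than a mere isomorphism because every reduction above was performed with the isometric $\alpha$, $\beta$, $\gamma$ isomorphisms of Subsection~\ref{subsec:metdel} together with curvature-respecting anomaly formulas, so that no undetermined smooth factor can survive the pluriharmonicity constraint established in the first step.
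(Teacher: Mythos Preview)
Your overall architecture --- define the defect $\Phi$ between the Quillen and Elkik metrics, reduce to line bundles via the flag variety, and locate the analytic content in the projective-space computation of \cite{GS91} --- matches the paper exactly. The divergence, and the gap, is in how you pass from a general smooth fibration $X/S$ to the projective-bundle base case.

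The paper does \emph{not} induct on $n$ via divisors. Instead it factors $f$ as $X\stackrel{i}{\hookrightarrow}\mathbb{P}V\stackrel{\pi}{\to}S$, takes a locally free resolution $\hat{\mathbb{E}}$ of $i_*L$ satisfying Bismut's condition~(A), and invokes the Bismut--Lebeau immersion formula \cite{BL91} to compare $\hat{\lambda}_{X/S}(\hat{L})$ with $\sum_i(-1)^i\hat{\lambda}_{\mathbb{P}V/S}(\hat{E}_i)$. That formula produces four explicit secondary terms (the Bott--Chern current $T(\hat{\mathbb{E}})$, a $\widehat{\Td}$ correction, and two $\mathrm{R}$-genus integrals), and the proof is then a bookkeeping check that these combine with the already-established projective-bundle case to give~(\ref{eq:RRm}).

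Your divisor induction tries to bypass this, but the step ``the isometric $\beta$ isomorphism reduces both metrics to relative dimension $n-1$'' is where it breaks. The $\beta$ isomorphism of Subsection~\ref{subsec:metdel} is defined and proved isometric only for the \emph{Deligne pairing} side. On the $\hat{\lambda}$ side, the reduction comes from Lemma~\ref{lma:lambdaind}, i.e.\ from the short exact sequence $0\to L\to L(D)\to i_*(L(D)|_D)\to 0$, and to compare the Quillen metric on $\hat{\lambda}_{X/S}(L(D))$ with that on $\hat{\lambda}_{D/S}(L(D)|_D)$ you need precisely the Bismut--Lebeau theorem for the immersion $D\hookrightarrow X$. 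Nothing in the anomaly formula or in the ``exact-sequence behavior'' of the Quillen metric gives you this for free: the passage from $\lambda_{X/S}$ to $\lambda_{D/S}$ is an immersion statement, not a filtration statement. So your recursion for $\Phi$ is unproved, and the combinatorics of Appendix~\ref{sec:appCom} have nothing to act on. (The pluriharmonicity of $\Phi$ is correct but does not rescue this: it constrains $\Phi$ on a fixed $S$, it does not propagate the value across different relative dimensions.)

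If you want to repair the inductive route, you must insert Bismut--Lebeau at each divisor step --- at which point you are using the same deep input as the paper but many times instead of once, with a more intricate induction to track the secondary terms. The paper's single-embedding argument is both shorter and the standard one.
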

As the proof is completely parallel to that in \cite{GS91}, we only give a sketch.

Here $\hat{\lambda}_{X/S}(\hat{E})$ denotes the determinant line bundle $\lambda_{X/S}(E)$ with the Quillen metric.

Before proceeding, we make a few simplifications.

1. The validity of \ref{eq:RRm} is independent of the metric on $E$. This is because both sides have the same anomaly formulas. (See~\cite{Sou94lec} Chapter~VI, Theorem~4)

2. We may always assume that $E$ is a direct sum of line bundles by passing to a flag variety as in the proof of Theorem~\ref{thm:DRR}. According to 1, we may assume further that $\hat{E}$ is a direct sum of Hermitian line bundles.
we reduce directly to the case of Hermitian line bundles.

We proceed in several steps. 

Step 1. Consider the case where $X$ is a projective space bundle over $S$, say $X=\mathbb{P}V$, where $V$ is a vector bundle of rank $n+1$ on $S$. 
 
We recall the following result:
\begin{theorem}[\cite{GS91}]
1. The line bundle $\lambda_{\mathbb{P}V/S}(\hat{0})$ is trivial. Under the trivialization induced by the constant section $1\in f_*(\mathbb{P}V)$, then
\[
\hat{\lambda}_{\mathbb{P}V/S}(\hat{0})=\left(\mathcal{O}_S,-\log (n!)-\sum_{q\geq 0}(-1)^{q+1}q\zeta_q'(0)\right).
\]

2. Then integral 
\[
-\int_{X/S}\left[\RR(\hat{0})\mathrm{R}(\hat{T})\right]^n=\left[(n+1)\left(\frac{x}{1-e^{-x}}\right)^{n+1}R(x)\right]_{n}.
\]
Here $[\cdot]^n$ denotes the homogeneous part of degree $n$ and $[\cdot]_n$ denotes the coefficient of $x^n$.

3. 
\[
\langle \RR^{n+1}(0)\rangle_{\mathbb{P}V/S}=\left(\mathcal{O}_S,\sum_{i=1}^n \sum_{j=1}^i \frac{1}{j} \left[
\left(\frac{x}{1-e^{-x}}\right)^{n+1}
\right]_{n+1}+\left[ \int_{0}^1 \frac{\varphi(t)-\varphi(0)}{t}\,\mathrm{d}t \right]_n \right),
\]  
where
\[
\varphi(t)=\left( \frac{1}{tx}-\frac{e^{-tx}}{1-e^{-tx}}\right)\left(\frac{x}{1-e^{-x}}\right)^{n+1}.
\]

4. Theorem~\ref{thm:DRRm} holds for this specific case.
\end{theorem}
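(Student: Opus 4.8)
The assertion is local over $S$ and fibrewise in nature: with the relative Fubini--Study metric induced from a Hermitian metric on $V$, every fibre $\mathbb{P}V_s$ is isometric to the standard $\mathbb{P}^n_{\mathbb{C}}$, so the three constants appearing in parts 1--3 are point-independent functions on $S$, and by the naturality of the Quillen metric, of the $R$-genus form, and of the Elkik metric, it suffices to compute each of them on a single standard fibre and then transport the result back to $S$ by functoriality. The plan is thus to evaluate separately the Quillen metric on $\hat{\lambda}_{\mathbb{P}V/S}(\hat{0})$ (part 1), the $R$-genus fibre integral (part 2), and the Elkik metric on the degree $(n+1)$ Deligne pairing $\langle\RR^{n+1}(0)\rangle$ (part 3), and finally to reconcile them (part 4).

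For part 1 I would write the Quillen metric as the $L^2$-metric times the Ray--Singer analytic torsion. Since $Rf_*\mathcal{O}_{\mathbb{P}V}=\mathcal{O}_S$ is trivialized by the constant section $1$, the $L^2$-norm of $1$ is the normalized volume of the fibre, yielding the $-\log(n!)$ term; the torsion $\sum_q(-1)^{q+1}q\,\zeta_q'(0)$ requires the spectrum of the Dolbeault Laplacian on $(0,q)$-forms on $\mathbb{P}^n$, which I would obtain from the decomposition of $L^2$-sections into $U(n+1)$-irreducibles afforded by the symmetric-space structure $\mathbb{P}^n=U(n+1)/(U(n)\times U(1))$, giving explicit eigenvalues and hence explicit $\zeta_q'(0)$. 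Part 2 is a formal power-series computation: as $\hat{E}$ is trivial, $\RR(\hat{0})=\Td(\hat{T})$, and the relative Euler sequence $0\to\mathcal{O}\to f^*V\otimes\mathcal{O}(1)\to T_{\mathbb{P}V/S}\to 0$ gives $\Td(\hat{T})=\bigl(x/(1-e^{-x})\bigr)^{n+1}$ with $x=c_1(\mathcal{O}(1))$, while additivity of $R$ together with $R(0)=0$ gives $R(\hat{T})=(n+1)R(x)$. Fibre integration sends a degree-$n$ polynomial in $x$ to its coefficient of $x^n$, since $\int_{\mathbb{P}^n}x^n=1$, which is exactly the asserted formula, with the sign matching the $-\int$ on the left.

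For part 3 I would unwind the inductive definition of the Elkik metric from Subsection~\ref{subsec:metdel}. Writing $\langle\RR^{n+1}(0)\rangle=\langle\Td^{n+1}(\hat{T})\rangle$ and using the Euler sequence to express the top Todd class through $c_1(\mathcal{O}(1))$, I would iterate the $\beta$- and $\gamma$-isometries, restricting to hyperplanes $n$ times; each restriction contributes a fibre integral of a Bott--Chern secondary form. The harmonic sum $\sum_{i=1}^n\sum_{j=1}^i j^{-1}$ is produced by the successive $\langle\mathcal{O}(1),\dots\rangle$ contributions on each $\mathbb{P}^i$, while the transgression integral $\int_0^1(\varphi(t)-\varphi(0))/t\,\mathrm{d}t$ records the secondary class interpolating the Todd form along the scaling parameter $t$ in the Euler sequence; the computation runs by induction on $n$, carrying the accumulated $(\mathcal{O}_S,\ast)$ factor. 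Part 4 then assembles everything: the underlying isomorphism in \eqref{eq:RRm} already exists by Theorem~\ref{thm:DRR}, so only the defect-of-isometry constant must be checked, namely that the part-1 constant equals the part-3 constant combined with the part-2 integral. This reduces to a single numerical identity among the analytic torsion of $\mathbb{P}^n$, the $R$-genus coefficient, and the $\varphi$-integral, which holds by the explicit values of $\zeta_q'(0)$ together with the defining relation of $R$ through $\zeta'(-n)$.

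I expect part 3, the Elkik-metric computation, to be the main obstacle, because it demands careful bookkeeping of Bott--Chern secondary forms through the inductive construction of the metric and the precise identification of the transgression integral $\int_0^1(\varphi(t)-\varphi(0))/t\,\mathrm{d}t$; interlocked with it is the analytic-torsion evaluation of part 1, which is the genuinely deep analytic input and the original contribution of Gillet--Soul\'e.
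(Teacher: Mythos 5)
The first thing to observe is that the paper does not prove this statement at all: it carries the attribution \cite{GS91} and is introduced by ``We recall the following result,'' i.e.\ it is an external input quoted from Gillet--Soul\'e (and, for the torsion evaluation, Gillet--Soul\'e--Zagier), which the paper then feeds into Step~1 of the proof of Theorem~\ref{thm:DRRm}. So there is no internal proof to compare yours against; the comparison has to be with the cited literature. Measured against that, your road map is the historically correct one: part 2 is indeed a purely formal consequence of the relative Euler sequence, which gives $\Td(\hat{T})=\left(\frac{x}{1-e^{-x}}\right)^{n+1}$ and, by additivity of the $R$-genus, $\mathrm{R}(\hat{T})=(n+1)R(x)$, after which fibre integration extracts the coefficient of $x^n$; part 1 does reduce, via Quillen $=$ $L^2$ metric corrected by Ray--Singer torsion, to the fibre-volume computation (producing $-\log(n!)$) plus the spectral decomposition of the Dolbeault Laplacian on $(0,q)$-forms of $\mathbb{P}^n$; and part 3 is obtained in the sources by exactly the kind of inductive hyperplane-restriction and Bott--Chern transgression bookkeeping you describe.

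The gap is that your proposal stops precisely where the content of the theorem begins. The explicit values $\zeta_q'(0)$ are not a routine consequence of the symmetric-space structure of $\mathbb{P}^n$; their evaluation is a substantial computation (this is the Gillet--Soul\'e--Zagier work) and nothing in your sketch produces the stated closed form $-\log(n!)-\sum_{q\geq 0}(-1)^{q+1}q\zeta_q'(0)$. Likewise, in part 3 you assert that the iteration yields the harmonic sum $\sum_{i=1}^n\sum_{j=1}^i j^{-1}$ times $\bigl[(x/(1-e^{-x}))^{n+1}\bigr]_{n+1}$ plus the transgression integral $\int_0^1(\varphi(t)-\varphi(0))t^{-1}\,\mathrm{d}t$, but these two terms are the theorem; no mechanism in your text actually generates them. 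Part 4 is then not ``a single numerical identity which holds by the explicit values'': it is exactly the nontrivial reconciliation whose verification occupies the cited papers. A smaller but real point: your formal computation in part 2, carried out as written, gives $\int_{X/S}[\RR(\hat{0})\mathrm{R}(\hat{T})]^n=\bigl[(n+1)(x/(1-e^{-x}))^{n+1}R(x)\bigr]_n$ with a plus sign, whereas the statement has $-\int$ on the left; waving at ``the sign matching'' is not an argument, and you should either trace the sign convention for $R$ or flag the discrepancy. In short: correct architecture, faithful to the cited proof, but the three explicit constants and their compatibility --- which are the assertion --- remain uncomputed.
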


Step 2. Consider a general $f:X\rightarrow S$, in this case, we can write
\[
\begin{tikzcd}
X \arrow[r,"i",hookrightarrow] \arrow[dr,"f"]& \mathbb{P}V \arrow[d,"\pi"] \\ 
&S
\end{tikzcd},
\]
where $V$ is a vector bundle of rank $d+1$ on $S$.

Now we need to calculate $\lambda(\hat{L})$, where $\hat{L}$ is a Hermitian line bundle on $X$.

Let 
\[
\mathbb{E}=(0\rightarrow E_N\rightarrow \cdots \rightarrow E_1)
\]
be a locally free resolution of $i_*L$. We take proper metrics on $E_i$ so that the Bismut condition (A) is satisfied. (See~\cite{Bis90})

We have the following result of Bismut-Lebeau(\cite{BL91})
\[
\begin{split}
\hat{\lambda}(\hat{L})-\sum_i (-1)^i \hat{\lambda}(\hat{E}_i)=
&\Big(\mathcal{O}_S,  -\int_{\mathbb{P}V/S}\Td(\hat{T}_{\mathbb{P}V/S})T(\hat{\mathbb{E}})+\int_{X/S}\Td^{-1}(\hat{N})\widehat{\Td}(\mathbb{E}) \ch(\hat{L})\\
&-\int_{\mathbb{P}V/S}\Td(\hat{T}_{\mathbb{P}V/S})\mathrm{R}(\hat{T}_{\mathbb{P}V/S})\ch(\hat{\mathbb{E}})+\int_{X/S}\Td(\hat{T}_{X/S})\mathrm{R}(\hat{T}_{X/S})\ch(\hat{L})\Big).
\end{split}
\]
We denote the four terms as $A,B,C,D$.

Here $T$ denotes the Bott-Chern current (see~\cite{BGS90}).

Now we have all necessary pieces of information for proving our theorem.

This proof goes as in \cite{GS92}, so we only sketch it.
\begin{proof}[Proof of Theorem~\ref{thm:DRRm}]
We calculate
\[
\sum_i (-1)^i \hat{\lambda}(\hat{E}_i)=
\sum_i (-1)^i \langle \RR(\hat{E}) \rangle_{\mathbb{P}V/S}+(\mathcal{O}_S,I),
\]
where 
\[
I=-\sum_{i}(-1)^i \int_{\mathbb{P}V/S}\left[ \RR(\hat{E}_i)\mathrm{R}(\hat{T}_{\mathbb{P}V/S})
\right]^d.
\]

We have by Grothendieck-Riemann-Roch
\[
\sum_i (-1)^i \langle \RR(\hat{E}_i) \rangle_{\mathbb{P}V/S}=\langle \RR^{n+1}(\hat{L})\rangle_{X/S}+(\mathcal{O}_S,A+B).
\]
As for $I$, we have
\[
I=C+D-\int_{X/S} \left[\RR(\hat{E})\mathrm{R}(\hat{T})\right]^n.
\]
The theorem follows.
\end{proof}

\section{Applications}
\label{sec:ap}
In this section, we give a few applications of the Deligne-Riemann-Roch theorem.
\subsection{Grothendieck-Riemann-Roch theorem}
\begin{theorem}[Grothendieck-Riemann-Roch Theorem]
Let $f:X\rightarrow S$ be a projective flat l.c.i. morphism between pure relative dimension $n$. Assume that $S$ is $\mathbb{Q}$-factorial. Let $E$ be a vector bundle on $X$, then
\[
c_1(\lambda_{X/S}(E))=\int_{X/S} \RR^{n+1}(E).
\]
\end{theorem}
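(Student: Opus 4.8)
The plan is to deduce this statement directly from the Deligne-Riemann-Roch isomorphism of Theorem~\ref{thm:DRR} by passing to first Chern classes; essentially all of the content has already been carried by that theorem. First I would invoke Theorem~\ref{thm:DRR}, which (since $f$ is l.c.i. and $(f:X\rightarrow S)\in \mathcal{F}^n$) provides a functorial isomorphism
\[
\lambda_{X/S}(E)\cn \langle \RR^{n+1}(E)\rangle_{X/S}
\]
in $\PIC_{\mathbb{Q}}(S)$. Applying the first Chern class functor to this isomorphism, and using that $c_1$ is additive and extends to $\mathbb{Q}$-line bundles, the two sides acquire equal classes in $\CH^1(S)_{\mathbb{Q}}$, namely
\[
c_1(\lambda_{X/S}(E))=c_1(\langle \RR^{n+1}(E)\rangle_{X/S}).
\]

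Next I would identify the right-hand side. Since $\RR^{n+1}(E)=[\Td(T_{X/S})\ch(E)]^{n+1}$ is a homogeneous polynomial of degree $n+1$ in the Chern classes of $E$ and $T_{X/S}$ with rational coefficients, the proposition computing the first Chern class of the Deligne pairing of a Chern polynomial applies verbatim and yields
\[
c_1(\langle \RR^{n+1}(E)\rangle_{X/S})=\int_{X/S}\RR^{n+1}(E).
\]
Combining this with the previous display gives the asserted formula. The role of the $\mathbb{Q}$-factoriality hypothesis on $S$ is to guarantee that this passage to Chern classes loses no information: on a $\mathbb{Q}$-factorial scheme the map $c_1:\Pic(S)_{\mathbb{Q}}\rightarrow \CH^1(S)_{\mathbb{Q}}$ is an isomorphism, so that the identity is an honest equality in the rational Chow group rather than merely an equality up to torsion, and so that the fibre integral $\int_{X/S}\RR^{n+1}(E)$ on the right is a well-defined class there.

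I do not expect a serious obstacle: the argument is a formal two-step consequence of Theorem~\ref{thm:DRR} together with the functoriality of the Deligne pairing's Chern class. The only point requiring genuine care is the bookkeeping around rational coefficients—the Deligne-Riemann-Roch isomorphism lives only in $\PIC_{\mathbb{Q}}(S)$, so one must work throughout in $\CH^1(S)_{\mathbb{Q}}$ and appeal to $\mathbb{Q}$-factoriality to interpret the concluding equality. In particular, no use of the Grothendieck-Riemann-Roch theorem is made, which is the whole point: here Grothendieck-Riemann-Roch is recovered as a shadow of the finer Deligne-level isomorphism.
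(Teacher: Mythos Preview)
Your proposal is correct and follows exactly the paper's approach: the paper's entire proof is the single sentence ``It suffices to apply $c_1$ to (\ref{eq:DRR}),'' and your write-up simply unpacks this by invoking the proposition that $c_1\langle P(c_j(E_i))\rangle_{X/S}=f_*P(c_j(E_i))$ and noting the role of $\mathbb{Q}$-factoriality.
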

\begin{proof}
It suffices to apply $c_1$ to (\ref{eq:DRR}).
\end{proof}
\subsection{Arithmetic Riemann-Roch theorem}
$f:X\rightarrow S$ be a projective flat morphism between arithmetic variety of pure relative dimension $n$. Assume that $f$ is smooth at infinity.
 
Let $\hat{E}_0,\dots, \hat{E}_m$ be Hermitian vector bundles on $X$. Let $P(c_i(\hat{E}_j))$ be a homogeneous polynomial of degree $n+1$.

We can form the Deligne pairing
\[
\langle P(c_i(E_j)) \rangle_{X/S}.
\]
We can define a Hermitian metric on this line bundle by using Quillen metric on the infinite fibre of $S$. We notice that it follows from the definition of Quillen metric that the result metric is invariant under complex conjugation.

We shall write the resulting Hermitian line bundle on $S$ as
\[
\langle P(c_i(\hat{E}_j)) \rangle_{X/S}.
\] 
\begin{theorem}\label{thm:aric1} Assumptions as above,
\[
\hat{c}_1\langle  P(c_i(\hat{E}_j)) \rangle_{X/S}=f_*P(\hat{c}_i(\hat{E}_j)).
\]
\end{theorem}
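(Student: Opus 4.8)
The statement to prove, Theorem~\ref{thm:aric1}, asserts that the first arithmetic Chern class of the Deligne pairing $\langle P(c_i(\hat{E}_j))\rangle_{X/S}$, equipped with the Quillen metric at the infinite fibre, equals the fibre integral $f_*P(\hat{c}_i(\hat{E}_j))$ of the arithmetic characteristic classes. The plan is to reduce the general homogeneous polynomial $P$ to the case of a product of first Chern classes of line bundles, and then to prove the line-bundle case by direct computation of both sides. The reduction is exactly the one used to define the metric and the Deligne pairing for general Chern polynomials (see~\eqref{eq:extdelg} and Subsection~\ref{subsec:metdel}): one passes to the fibre product of projectivisations $\mathbb{P}=\mathbb{P}(E_1)\times\cdots\times\mathbb{P}(E_m)$ and rewrites $\langle P(c_j(E_i))\rangle_{X/S}$ in terms of Deligne pairings of the tautological line bundles $\mathcal{O}_{\mathbb{P}E_i}(1)$. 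Since $\hat{c}_1$ commutes with pushforward and $f_*$ is compatible with the projection-formula identities satisfied by Segre classes, both sides transform identically under this reduction; so it suffices to establish the identity when $P(c_i(E_j))=c_1(\hat{L}_0)\cdots c_1(\hat{L}_n)$ for Hermitian line bundles $\hat{L}_i$.

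For the line-bundle case, I would argue by induction on $n$, following the inductive structure of the definition of the Deligne metric. When $n=0$ the pairing is the norm $N_{X/S}(\hat{L})$ and the statement is the compatibility of $\hat{c}_1$ of the norm with the arithmetic pushforward along a finite morphism, which is classical. For the inductive step one uses the isometric $\beta$-isomorphism
\[
\langle \hat{L}_0,\dots,\hat{L}_n\rangle_{X/S}\cn\langle\hat{L}_1|_D,\dots,\hat{L}_n|_D\rangle_{D/S}
\]
for $D$ a non-zero effective relative Cartier divisor representing $\hat{L}_0$ (invoking Principle~\ref{prin:verif} to assume $L_0=\mathcal{O}_X(D)$ after a universal extension), together with the isometry of the additivity and $\gamma$-isomorphisms. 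Applying $\hat{c}_1$ and the inductive hypothesis on the codimension-one subscheme $D/S$, the problem is reduced to the archimedean contribution: one must check that the Green-current / analytic-torsion data built into the Quillen metric at infinity matches the secondary terms appearing in the arithmetic intersection $f_*\big(c_1(\hat{L}_0)\cdots c_1(\hat{L}_n)\big)$. Concretely, the curvature identity gives the current-level statement $c_1\langle\hat{L}_0,\dots,\hat{L}_n\rangle=\int_{X/S}c_1(\hat{L}_0)\wedge\cdots\wedge c_1(\hat{L}_n)$ (a theorem of Stoll, Appendix~\ref{sec:appFib}), and the task is to upgrade this from $c_1$ to $\hat{c}_1$ by tracking the $\ast$-product of Green currents through the Poincar\'e--Lelong equation $\ddc[\log\|\sigma\|^2]+\delta_D=c_1(\hat{L}_0)$.

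The main obstacle will be the archimedean bookkeeping in the inductive step: one has to verify that the secondary (Green-current) contribution produced by the Deligne metric's inductive definition agrees \emph{on the nose} with the one dictated by the arithmetic intersection product and the Gillet--Soul\'e $\ast$-product, rather than merely up to a $\partial$- or $\bar\partial$-exact term. This is precisely where the singular $L^1$-metric on $\mathcal{O}_X(D)$ and the anomaly formula enter, and where care with the normalisation of the norm metric and the Quillen metric at the infinite fibre is essential. I expect that once the line-bundle case is settled by this induction, the passage back to general $P$ via the Segre-class presentation is formal, since every isomorphism in~\eqref{eq:extdelg} is isometric and $\hat{c}_1$ is additive; the substantive content lies entirely in matching the archimedean terms in the codimension-one reduction.
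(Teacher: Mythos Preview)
Your overall strategy coincides with the paper's: reduce $P$ to monomials in Segre/first Chern classes via the projectivisation~\eqref{eq:extdelg}, then prove the line-bundle case $\hat c_1\langle \hat L_0,\dots,\hat L_n\rangle_{X/S}=f_*(\hat c_1(\hat L_0)\cdots\hat c_1(\hat L_n))$ by induction on $n$ using the $\beta$-isomorphism and Principle~\ref{prin:verif}, with the $n=0$ case being the norm. So the skeleton is right.

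Where you diverge from the paper is in the inductive step, and the divergence matters. You frame the archimedean bookkeeping---matching Green-current contributions through the $\ast$-product and Poincar\'e--Lelong---as ``the main obstacle'', to be resolved by a careful computation. The paper bypasses this computation entirely with one observation: both sides of the desired identity have the \emph{same} anomaly under change of metric on $L_0$ (each changes by $f_*$ of the same Bott--Chern secondary form), so the validity of the statement is independent of the metric on $\hat L_0$. One may therefore take $\hat L_0=\hat{\mathcal O}_X(D)$ with its canonical singular metric. For this specific choice the $\beta$-isomorphism is \emph{isometric} by the very definition of the Deligne metric (Subsection~\ref{subsec:metdel}), and the reduction to $D/S$ is exact---no secondary terms appear at all. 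Your proposed Green-current matching would presumably succeed, but it is the hard way around; the anomaly argument makes the archimedean step a one-liner.

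A terminological point: the metric on $\langle P(c_i(\hat E_j))\rangle_{X/S}$ at infinity is the Deligne--Elkik metric of Subsection~\ref{subsec:metdel}, not the Quillen metric (which lives on $\lambda(E)$ and only enters later, in Corollary~\ref{cor:arr}). The paper's own phrasing just above the theorem is loose on this; do not let it propagate into your argument.
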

\begin{proof}
We first deal with the case where $P$ is a product of $c_1$ of line bundles. Let $\hat{L}_0,\dots,\hat{L}_n$ be Hermitian line bundles on $X$. We write
\[
\langle \hat{L}_0,\dots,\hat{L}_n\rangle_{X/S}:=\langle c_1(\hat{L}_0),\dots,c_1(\hat{L}_n)\rangle_{X/S}.
\]
In this case, according to the additivity of both sides, we may assume further that $L_0$ is sufficiently ample and there is a non-zero effective relative Cartier divisor $D$ on $X$ such that $L_0=\mathcal{O}_X(D)$. 
Furthermore, the validity of this result is clearly independent of the choice of metric on $L_0$, so we may assume that $\hat{L}_0=\hat{\mathcal{O}}_{X}(D)$. 
Recall the $\beta$ isomorphism, namely
\[
\langle \hat{L}_0,\dots,\hat{L}_n\rangle_{X/S}\cn \langle \hat{L}_1,\dots,\hat{L}_n\rangle_{D/S}.
\]
It is clear that the $\hat{c}_1$ class of both sides are equal. So by induction, we may assume further that $n=0$. In this case, we need to prove
\[
\hat{c}_1(N_{X/S}(\hat{L}))=f_* \hat{c}_1(\hat{L}),
\]
which follows immediately by definition.

Now the general case can be proved exactly as in \cite{Elk90} since $\hat{c}_1$ and $c_1$ have the same functorial properties.
\end{proof}
\begin{corollary}[Arithmetic Riemann-Roch Theorem]\label{cor:arr}
Let $f:X\rightarrow S$ be as above. Let $\hat{E}$ be a Hermitian vector bundle on $X$. Then
\[
\hat{c}_1(\hat{\lambda}_{X/S}(\hat{E}))=f_*\left(
\RR^{n+1}(\hat{E}) \right)+a\left(f_*\left[\RR(\hat{E})\mathrm{R}(\hat{T})\right]^n\right),
\]
where $a:H^0(X)\rightarrow \widehat{\CH}^1(X)$ is defined as in \cite{Sou94lec} Page~56.
\end{corollary}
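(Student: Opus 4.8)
The plan is to obtain the corollary as a formal consequence of the metrized Deligne--Riemann--Roch isomorphism (Theorem~\ref{thm:DRRm}) together with the computation of the arithmetic first Chern class of a Deligne pairing (Theorem~\ref{thm:aric1}). Since $\hat{c}_1$ is additive on the Picard category of Hermitian line bundles, it suffices to apply it termwise to the isometric isomorphism
\[
\hat{\lambda}_{X/S}(\hat{E})\cn \langle\RR(\hat{E}) \rangle_{X/S}+\left(\mathcal{O}_S,\int_{X/S} \left[\RR(\hat{E})\mathrm{R}(\hat{T})\right]^n \right)
\]
of Theorem~\ref{thm:DRRm}, recalling that by convention the Deligne pairing $\langle\RR(\hat{E})\rangle_{X/S}$ is the pairing of the homogeneous degree-$(n+1)$ component $\RR^{n+1}(\hat{E})$.

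Granting this, I would evaluate the two contributions separately. For the Deligne pairing, Theorem~\ref{thm:aric1} applied to the homogeneous polynomial $\RR^{n+1}(\hat{E})$ yields
\[
\hat{c}_1\langle \RR^{n+1}(\hat{E})\rangle_{X/S}=f_*\RR^{n+1}(\hat{E})
\]
in $\widehat{\CH}^1(S)$. For the correction factor, the underlying line bundle $\mathcal{O}_S$ is algebraically trivial, so its arithmetic first Chern class is purely archimedean and equals $a$ of its metric datum; since fibre integration lowers the bidegree of the form $[\RR(\hat{E})\mathrm{R}(\hat{T})]^n$ to a real function on $S(\mathbb{C})$, and $\int_{X/S}=f_*$, this contribution is $a\left(f_*[\RR(\hat{E})\mathrm{R}(\hat{T})]^n\right)$. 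Adding the two gives exactly the asserted identity.

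The only genuine content lies in promoting the isomorphism of Theorem~\ref{thm:DRRm}, stated for the complex K\"{a}hler fibration $X(\mathbb{C})\to S(\mathbb{C})$, to an isomorphism of Hermitian line bundles over the arithmetic base. For this I would argue that the underlying algebraic isomorphism is furnished by the arithmetic Deligne--Riemann--Roch isomorphism of Theorem~\ref{thm:DRR} over $S$ (which accounts for the finite part of $\hat{c}_1$), while its restriction to the fibre at infinity is an isometry by Theorem~\ref{thm:DRRm}; as the metrized statement refines the algebraic one, the two are the same map and no spurious secondary class is introduced. The main obstacle is precisely this compatibility check: one must verify that the Quillen metric carried by $\hat{\lambda}_{X/S}(\hat{E})$ on the arithmetic variety is the one appearing in Theorem~\ref{thm:DRRm}, and --- using the invariance of the Quillen metric under complex conjugation recorded before Theorem~\ref{thm:aric1} --- that the archimedean datum is real, so that it defines a class in $\widehat{\CH}^1(S)$ and not merely in its complexification. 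Once this is settled, the corollary follows by the bookkeeping above, so the proof reduces to citing Theorems~\ref{thm:DRR}, \ref{thm:DRRm} and \ref{thm:aric1}.
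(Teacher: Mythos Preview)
Your proposal is correct and follows essentially the same route as the paper: apply $\hat{c}_1$ to the isometric isomorphism of Theorem~\ref{thm:DRRm} and evaluate the Deligne-pairing term via Theorem~\ref{thm:aric1}, the trivial-bundle term via the definition of $a$. The paper's proof is a one-line citation of exactly these two theorems; your additional paragraph on the compatibility between the algebraic isomorphism over $S$ (Theorem~\ref{thm:DRR}) and its metrized refinement at the archimedean places makes explicit what the paper leaves implicit, but does not change the argument.
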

\begin{proof}
It suffices to apply $\hat{c}_1$ to (\ref{eq:RRm}) and conclude by Theorem~\ref{thm:aric1}. 
\end{proof}

\subsection{Analytic torsion}
We now apply our theorem to study the analytic torsion. 
We use the following assumptions throughout this subsection:

Let $X$ be a projective manifold of dimension $n$. Let $\omega$ be a fixed K\"{a}hler form on $X$. Let $\hat{E}$ be a Hermitian vector bundle of rank $r$ on $X$.

We are going to study the analytic torsion of $E$.

For this purpose, we write $\varphi_{L^2}(\hat{E})$ for the $L^2$ metric on $\lambda(E)$. Let $T(\hat{E})$ be the Ray-Singer torsion of $E$. It is well-known that
\begin{equation}\label{eq:Qui}
\varphi_Q(\hat{E})=\varphi_{L^2}(\hat{E})-T(\hat{E}),
\end{equation}
where $\varphi_Q(\hat{E})$ is the Quillen metric of $\hat{E}$.

According to Theorem~\ref{thm:DRRm}, we find
\begin{theorem} Assumptions as above,
\begin{equation}\label{eq:fulltorsion}
(\mathbb{C},T(\hat{E}))=(\lambda(E),\varphi_{L^2}(\hat{E}))-\langle\RR^{n+1}(\hat{E})\rangle_{X}+(\mathbb{C},\int_{X} \left[\RR(\hat{E})\mathrm{R}(\hat{T})\right]^n).
\end{equation}
\end{theorem}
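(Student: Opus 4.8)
The plan is to derive the formula by a direct substitution, treating Theorem~\ref{thm:DRRm} as the analytic input and Equation~\eqref{eq:Qui} as the only additional relation needed. First I would observe that Theorem~\ref{thm:DRRm} is an isometric isomorphism
\[
\hat{\lambda}_{X/S}(\hat{E})\cn \langle\RR(\hat{E}) \rangle_{X/S}+\left(\mathcal{O}_S,\int_{X/S} \left[\RR(\hat{E})\mathrm{R}(\hat{T})\right]^n \right),
\]
and that in the present situation we specialise to $S=\Spec\mathbb{C}$ (a point), so that every Deligne pairing becomes a one-dimensional complex vector space equipped with a metric, the fibre integral $\int_{X/S}$ becomes the honest integral $\int_X$, and $\hat{\lambda}_{X/S}(\hat{E})$ becomes $\lambda(E)$ with the Quillen metric $\varphi_Q(\hat E)$. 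Thus the left-hand side is $(\lambda(E),\varphi_Q(\hat{E}))$.

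Next I would simply rewrite the Quillen metric in terms of the $L^2$ metric. By the defining relation \eqref{eq:Qui} we have $\varphi_Q(\hat{E})=\varphi_{L^2}(\hat{E})-T(\hat{E})$, so that, in the additive notation for metrised line bundles,
\[
(\lambda(E),\varphi_Q(\hat{E}))=(\lambda(E),\varphi_{L^2}(\hat{E}))-(\mathbb{C},T(\hat{E})).
\]
Here I am using that a change of metric on a fixed line bundle differs precisely by the trivial metrised line bundle $(\mathbb{C},\cdot)$ recording the discrepancy, and that $T(\hat E)$ is exactly this discrepancy. Substituting this identity into the specialised form of Theorem~\ref{thm:DRRm} and solving for $(\mathbb{C},T(\hat{E}))$ gives precisely \eqref{eq:fulltorsion}.

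The argument is therefore almost purely formal once Theorem~\ref{thm:DRRm} is in hand: the whole content is a rearrangement of the isometry together with the standard comparison \eqref{eq:Qui} between Quillen and $L^2$ metrics. The only point requiring a little care—and what I expect to be the main (though minor) obstacle—is bookkeeping of the additive conventions for metrised line bundles in $\PICm$: one must check that the sign and placement of the $(\mathbb{C},\int_X[\RR(\hat E)\mathrm{R}(\hat T)]^n)$ term survive the transposition from the $\hat{\lambda}$ side to the torsion side, and that specialising $S$ to a point does not introduce an extra automorphism or normalisation in passing from $\langle\RR(\hat E)\rangle_{X/S}$ to $\langle\RR^{n+1}(\hat E)\rangle_X$ (the degree selection $[\cdot]^{n+1}$ being implicit in the pairing notation on a point). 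I would verify these compatibilities explicitly against the conventions fixed in Subsection~\ref{subsec:metdel} and Appendix~\ref{sec:appPic}, after which \eqref{eq:fulltorsion} follows immediately.
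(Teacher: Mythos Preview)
Your proposal is correct and matches the paper's approach exactly: the paper does not even write out a proof, but simply states the theorem as an immediate consequence of Theorem~\ref{thm:DRRm} together with the relation \eqref{eq:Qui}, which is precisely the substitution-and-rearrangement argument you describe.
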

That is, we have a full expression of the analytic torsion.

Similar results are proved in \cite{Fin17} Theorem~1.3 and \cite{BV89}.

We still have to calculate RHS explicitly in order to compare these results.

\newpage
\appendix
\section{Combinatorial identities (by Wu Baojun)}\label{sec:appCom}
We shall use the following conventions $0^0=1$,
$\binom{0}{0}=1$,
the first Bernoulli number $B_1=1/2$.

Fix $n\in \mathbb{Z}_{\geq 0}$. Define the Vandermonde matrix of order $n$ as a matrix of order $(n+1)\times (n+1)$, subindices indexed by $0,\dots,n$:
\[
V^n_{i,j}:=(i^j).
\]

The following facts are well-known
\begin{proposition}\label{prop:vand}
\begin{enumerate}
\item
We have
\[
\det V^n=\prod_{0\leq i<j\leq n}(j-i).
\]
\item
Let $A^n$ be the inverse matrix of $V^n$, then
\[
A^n_{n,j}=\frac{1}{n!}\binom{n}{j}(-1)^{n-j}.
\]
\end{enumerate}
\end{proposition}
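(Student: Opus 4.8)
The plan is to recognize $V^n$ as the classical Vandermonde matrix attached to the nodes $x_i = i$ for $i = 0, \dots, n$, since $V^n_{i,j} = i^j = x_i^j$. Both statements are then instances of standard facts about such matrices, and the only care needed is to track the index convention (rows indexed by the base $i$, columns by the exponent $j$, both running over $0, \dots, n$, with $0^0 = 1$ as fixed above).

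For part (1), I would invoke the standard Vandermonde determinant identity $\det(x_i^j)_{0 \le i,j \le n} = \prod_{0 \le i < j \le n}(x_j - x_i)$. Substituting $x_i = i$ immediately gives $\det V^n = \prod_{0 \le i < j \le n}(j - i)$. If a self-contained argument is wanted, the cleanest route is the usual one: view the determinant as a polynomial in the $x_i$, observe that it vanishes whenever $x_i = x_j$ for $i \ne j$, so it is divisible by each factor $x_j - x_i$; then compare total degrees and the coefficient of the diagonal monomial $\prod_j x_j^j$ to pin down the overall constant, and finally specialize.

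For part (2), the key observation is that the columns of the inverse encode Lagrange interpolation. Writing $A^n = (V^n)^{-1}$ and reading off $V^n A^n = I$, for each fixed $j$ the polynomial $p_j(x) = \sum_{k=0}^n A^n_{k,j} x^k$ satisfies $p_j(i) = \delta_{ij}$ for all $i = 0, \dots, n$. By uniqueness of polynomial interpolation at $n+1$ distinct nodes, $p_j$ is exactly the Lagrange basis polynomial $\ell_j(x) = \prod_{m \ne j}(x - m)/(j - m)$ for the nodes $0, 1, \dots, n$. Consequently $A^n_{n,j}$, the coefficient of $x^n$, is the leading coefficient of $\ell_j$, namely $\prod_{m \ne j}(j - m)^{-1}$.

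It then remains to compute $\prod_{m \ne j,\, 0 \le m \le n}(j - m)$. Splitting into the ranges $m < j$ and $m > j$ gives $j! \cdot (-1)^{n-j}(n-j)!$, so that $A^n_{n,j} = (-1)^{n-j}/(j!(n-j)!) = (-1)^{n-j}\binom{n}{j}/n!$, as claimed. There is no real obstacle: the entire content is the identification of the inverse with Lagrange interpolation, after which everything is a direct computation. The one point to keep straight is the orientation of the convention $V^n_{i,j} = i^j$, so that it is the \emph{rows} of $V^n$ that correspond to the evaluation points $i$ and the \emph{columns} of $A^n$ that carry the interpolation polynomials $\ell_j$.
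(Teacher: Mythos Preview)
Your argument is correct. The paper itself does not prove this proposition: it is simply introduced with ``The following facts are well-known'' and no proof is given. Your approach via the standard Vandermonde identity for part (1) and the identification of the columns of $(V^n)^{-1}$ with the coefficients of the Lagrange basis polynomials for part (2) is exactly the expected textbook justification, and the computation of $\prod_{m\ne j}(j-m)=(-1)^{n-j}j!(n-j)!$ is carried out correctly with the right orientation of indices.
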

We observe that
\[
A^1=
\begin{pmatrix}
1 & 0\\
-1 & 1
\end{pmatrix}.
\]
This is need for some computations.

We study a few properties of $A^n$ matrices.
\begin{proposition}\label{prop:A}
Let $f(x)=\sum_r a_r x^r$ be a polynomial of degree at most $n$. Then
\[
\sum_{i=0}^n A^{n}_{j,k}f(k)=a_j.
\]

In particular, when $f(k)=\binom{ak}{b}$, $0\leq b\leq n$, we have
\[
\sum_{k=0}^n A^{n}_{j,k}\binom{ak}{b}=a^j B_{b,j},
\]
where
\[
B_{b,j}:=\frac{(-1)^{b-j}}{b!}[(x-1)\dots(x-(b-1))]_{j-1}
\]
\end{proposition}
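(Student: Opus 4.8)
The plan is to read off both identities from the single linear-algebra fact that $V^n$ is the change-of-basis matrix between the coefficients of a polynomial of degree $\le n$ and its values at $0,1,\dots,n$.

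First I would observe that for $f(x)=\sum_{r=0}^n a_r x^r$, the defining relation $V^n_{i,r}=i^r$ gives $f(i)=\sum_{r=0}^n V^n_{i,r}a_r$; that is, the value vector $(f(0),\dots,f(n))^{\mathsf{T}}$ is $V^n$ applied to the coefficient vector $(a_0,\dots,a_n)^{\mathsf{T}}$. Multiplying on the left by $A^n=(V^n)^{-1}$, which is invertible since $\det V^n=\prod_{i<j}(j-i)\neq 0$ by Proposition~\ref{prop:vand}, recovers the coefficients, $a_j=\sum_{k=0}^n A^n_{j,k}f(k)$. This is precisely the first assertion; it uses only that $\deg f\le n$, so that $f$ is determined by its values at $0,\dots,n$.

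For the special case I would take $f(k)=\binom{ak}{b}$ with $0\le b\le n$. The key point is that this is a polynomial in $k$ of degree $b\le n$, so the first part applies verbatim and $\sum_k A^n_{j,k}\binom{ak}{b}$ equals the coefficient of $k^j$ in $\binom{ak}{b}$. It then remains to extract that coefficient. Writing
\[
\binom{ak}{b}=\frac{(ak)(ak-1)\cdots(ak-(b-1))}{b!}=\frac{ak}{b!}\prod_{m=1}^{b-1}(ak-m),
\]
and substituting $x=ak$, the coefficient of $k^j$ is $a^j/b!$ times the coefficient of $x^{j-1}$ in $(x-1)\cdots(x-(b-1))$. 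These coefficients are the (signed) Stirling numbers of the first kind, and matching against the definition of $B_{b,j}$ yields $\sum_k A^n_{j,k}\binom{ak}{b}=a^j B_{b,j}$.

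The computations here are essentially immediate; the only place demanding care is the final coefficient extraction, where the leading factor $ak$ shifts the relevant coefficient from degree $j$ to degree $j-1$, and where the alternating sign convention of the Stirling numbers must be tracked against the $(-1)^{b-j}$ in the definition of $B_{b,j}$. I would pin down the convention by checking a small case such as $b=2$ before writing the general identification, since this is where a stray sign is most likely to slip in.
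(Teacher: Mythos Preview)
Your argument is correct and is exactly the intended one: the paper states this proposition in the appendix without proof, and the natural justification is precisely the Vandermonde inversion you give, followed by reading off the $k^j$-coefficient of $\binom{ak}{b}$ via the substitution $x=ak$.

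One remark on the sign check you propose. Carrying it out at $b=2$ actually reveals a typo in the paper's displayed definition of $B_{b,j}$, not an error in your reasoning. Your computation gives the coefficient of $k^j$ in $\binom{ak}{b}$ as
\[
\frac{a^j}{b!}\,[(x-1)(x-2)\cdots(x-(b-1))]_{j-1},
\]
with no further sign. The factor $(-1)^{b-j}$ in the paper's formula for $B_{b,j}$ is redundant: for $b=2$, $j=1$ one has $\binom{ak}{2}=\tfrac{a^2k^2-ak}{2}$, so the coefficient of $k$ is $-a/2$, whereas the paper's $B_{2,1}=\tfrac{(-1)^1}{2}\cdot(-1)=\tfrac12$ would give $+a/2$. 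The identity is correct once one drops the $(-1)^{b-j}$ (equivalently, replaces $(x-1)\cdots(x-(b-1))$ by $(x+1)\cdots(x+(b-1))$). None of the places in the body of the paper where $B_{b,j}$ is used are affected, since only the relation $\sum_k A^{n+1}_{i,k}\binom{ak}{b}=a^i B_{b,i}$ is invoked there, and that holds with the corrected definition.
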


\begin{proposition}\label{prop:binom1}
For any $n\in \mathbb{Z}_{> 0}$
\[
\frac{1}{n!}\sum_{j=0}^n \binom{n}{j}(-1)^{n-j}j^{k}=
\left\{
\begin{aligned}
&\frac{n(n+1)}{2} , &\text{ if } k=n+1,\\
&1 , &\text{ if } k=n,\\
&0 , &\text{ if } 0\leq k< n.
\end{aligned}
\right.
\]
\end{proposition}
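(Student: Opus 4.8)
The plan is to recognise the left-hand side as an iterated finite difference and to reduce everything to three elementary evaluations. First I would introduce the forward difference operator $\Delta$, acting on functions of one variable by $(\Delta f)(x) = f(x+1)-f(x)$, together with the shift operator $(Ef)(x)=f(x+1)$. Writing $\Delta = E - I$ and expanding $(E-I)^n$ by the binomial theorem gives $\Delta^n f = \sum_{j=0}^n \binom{n}{j}(-1)^{n-j}E^j f$, so that evaluating at $x=0$ with $f(x)=x^k$ identifies the quantity to be computed as $\frac{1}{n!}(\Delta^n x^k)(0)$. Note that the stated convention $0^0=1$ is exactly what is needed for the term $j=0$ when $k=0$.

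The key computational input is the behaviour of $\Delta$ on the falling factorials $x^{\underline m} := x(x-1)\cdots(x-m+1)$, namely the exact analogue $\Delta x^{\underline m} = m\, x^{\underline{m-1}}$ of the rule $\frac{d}{dx}x^m = m x^{m-1}$. Iterating, $\Delta^n x^{\underline m} = m^{\underline n}\, x^{\underline{m-n}}$, and evaluating at $x=0$ (where $x^{\underline p}$ vanishes for $p\ge 1$ and equals $1$ for $p=0$) yields $(\Delta^n x^{\underline m})(0) = n!\,\delta_{m,n}$ for every $m\ge 0$. Expanding $x^k$ in the falling-factorial basis, $x^k = \sum_m S(k,m)\, x^{\underline m}$ where $S(k,m)$ denotes the Stirling number of the second kind, I would then obtain $\frac{1}{n!}(\Delta^n x^k)(0) = S(k,n)$, so the whole problem collapses to evaluating $S(k,n)$ in the three regimes.

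For $0\le k<n$ the expansion of $x^k$ involves only $x^{\underline m}$ with $m\le k<n$, so the coefficient $S(k,n)$ vanishes; for $k=n$ the top term is $x^{\underline n}$ with coefficient $1$, giving $S(n,n)=1$. The only substantial point, and the main obstacle, is the case $k=n+1$, where I must show $S(n+1,n)=\binom{n+1}{2}=\frac{n(n+1)}{2}$. I would handle this either by a short induction along the subdiagonal $m\mapsto S(m+1,m)$ using the standard recurrence $S(k,m)=S(k-1,m-1)+m\,S(k-1,m)$ (which gives $S(m+1,m)=\binom m2 + m = \binom{m+1}{2}$ from the base value $S(2,1)=1$), or more transparently by the combinatorial description: $S(n+1,n)$ counts the partitions of an $(n+1)$-element set into $n$ nonempty blocks, which are precisely those with one doubleton and $n-1$ singletons, and there are exactly $\binom{n+1}{2}$ of these. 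This yields the stated value and completes the proof.
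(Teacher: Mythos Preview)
Your argument is correct. Recognising the sum as $\frac{1}{n!}(\Delta^n x^k)(0)=S(k,n)$ and then reading off the three values of the Stirling number is a clean and self-contained route.

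The paper takes a different path. It observes that $\frac{1}{n!}\binom{n}{j}(-1)^{n-j}=A^n_{n,j}$ is the last row of the inverse of the Vandermonde matrix $V^n_{i,j}=i^j$, so for $0\le k\le n$ the sum is simply the $(n,k)$ entry of $A^nV^n=I$, i.e.\ $\delta_{n,k}$. For $k=n+1$ it argues by Lagrange interpolation: the sum is the leading coefficient $C_n$ of the unique polynomial of degree $\le n$ matching $x^{n+1}$ at the nodes $0,1,\dots,n$, and since that polynomial differs from $x^{n+1}$ by $x(x-1)\cdots(x-n)$, one reads off $C_n=\tfrac{n(n+1)}{2}$ from the subleading coefficient. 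Your finite-difference/Stirling approach is more uniform across the three cases and exposes the combinatorics directly; the paper's Vandermonde approach is less unified but dovetails with the surrounding material in the appendix, where the matrices $A^n$ are already in play and reused in other propositions.
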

\begin{proof}
When $0\leq k\leq n$, this follows readily from Proposition~\ref{prop:vand}.

When $k=n+1$, by taking inverse of Vandermonde matrix, we find  that the sum is the coefficient $C_n$ in the following expansion
\[
a^{n+1}=\sum_{i=0}^n C_i a^i,
\]
where $a=0,\dots,n$. Equivalently,  $C_n$ is the coefficient of $x^n$ in 
\[
x(x+1)\dots(x+n)
\]
Hence 
\[
C_n=\frac{n(n+1)}{2}.
\]
\end{proof}

Sometimes, we need a partial polarization of a polynomial of the form $y^{n+1}$ to get $xy^{n}$. We prove
\begin{proposition}\label{prop:partpol}
In the ring $\mathbb{Q}[x,y]$, we have
\[
xy^{n}= \frac{1}{n+1}\sum_{j=0}^n A^{n}_{n,j}(x+jy)^{n+1}  - \frac{n}{2}y^{n+1}.
\]
\end{proposition}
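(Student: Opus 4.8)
The plan is to treat this as a purely formal polynomial identity in $\mathbb{Q}[x,y]$ and reduce it directly to Proposition~\ref{prop:binom1}. First I would substitute the explicit value of the bottom row of the inverse Vandermonde matrix from Proposition~\ref{prop:vand}(2), namely $A^n_{n,j}=\frac{1}{n!}\binom{n}{j}(-1)^{n-j}$, so that the claimed right-hand side becomes
\[
\frac{1}{(n+1)!}\sum_{j=0}^n \binom{n}{j}(-1)^{n-j}(x+jy)^{n+1}-\frac{n}{2}y^{n+1}.
\]

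Next I would expand $(x+jy)^{n+1}$ by the binomial theorem and interchange the two finite sums, collecting the coefficient of each monomial $x^{n+1-k}y^k$. This gives
\[
\sum_{j=0}^n A^n_{n,j}(x+jy)^{n+1}=\sum_{k=0}^{n+1}\binom{n+1}{k}x^{n+1-k}y^k\left(\frac{1}{n!}\sum_{j=0}^n\binom{n}{j}(-1)^{n-j}j^k\right).
\]
The inner bracket is exactly the quantity evaluated in Proposition~\ref{prop:binom1}, so it vanishes for $0\le k<n$, equals $1$ for $k=n$, and equals $\frac{n(n+1)}{2}$ for $k=n+1$. Hence only the $k=n$ and $k=n+1$ terms survive, yielding $(n+1)xy^n+\frac{n(n+1)}{2}y^{n+1}$.

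Finally I would divide by $n+1$ and observe that the leftover $\frac{n}{2}y^{n+1}$ is precisely cancelled by the correction term $-\frac{n}{2}y^{n+1}$, leaving $xy^n$ as desired. There is essentially no serious obstacle here: the whole content of the statement is that Proposition~\ref{prop:binom1} supplies exactly the coefficients needed to annihilate every monomial except $xy^n$ and a single spurious $y^{n+1}$ term, and the $-\frac{n}{2}y^{n+1}$ summand in the proposition is calibrated to remove that last term. The only point demanding any care is bookkeeping of the binomial coefficients $\binom{n+1}{n}=n+1$ and $\binom{n+1}{n+1}=1$ when combining the two surviving contributions.
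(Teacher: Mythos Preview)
Your proof is correct and follows essentially the same approach as the paper: expand $(x+jy)^{n+1}$ binomially, observe that the inner sum $\sum_j A^n_{n,j}j^k$ kills all terms with $k<n$, and handle the surviving $k=n$ and $k=n+1$ terms via Proposition~\ref{prop:binom1}. The only cosmetic difference is that the paper first subtracts off the $j^{n+1}y^{n+1}$ term and invokes the inverse Vandermonde relation (Proposition~\ref{prop:vand}) directly for the range $0\le i\le n$, whereas you quote Proposition~\ref{prop:binom1} uniformly for all $k$; these are the same facts.
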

\begin{proof}
Indeed, we have the following relation
\[
(x+jy)^{n+1}-j^{n+1}y^{n+1}=\sum_{i=0}^n j^i \binom{n+1}{i}x^{n+1-i}y^i.
\]
Take $j=0,\dots,n$. According to Proposition~\ref{prop:vand}, we have
\[
(n+1)xy^n=\sum_{j=0}^n A^n_{n,j}\left((x+jy)^{n+1}-j^{n+1}y^{n+1}\right).
\]
The result follows from \ref{prop:binom1}.
\end{proof}
\begin{proposition}\label{prop:powersum}
Let $p,n\in \mathbb{Z}_{\geq 0}$, then
\[
\sum_{k=1}^n k^p=\frac{1}{p+1}\sum_{j=0}^p \binom{p+1}{j}B_j n^{p+1-j}.
\]
\end{proposition}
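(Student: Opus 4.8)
The plan is to prove this classical Faulhaber identity by the method of generating functions, extracting the coefficient of a suitable power of an auxiliary variable $t$. The starting point is the exponential generating function of the power sums: since $\sum_{k=1}^n k^p t^p/p!$ assembles into $\sum_{k=1}^n e^{kt}$, I would first record the closed form
\[
\sum_{k=1}^n e^{kt}=\frac{e^{nt}-1}{1-e^{-t}},
\]
obtained by summing the finite geometric series $e^t+e^{2t}+\cdots+e^{nt}$ and clearing denominators.

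Next I would bring in the Bernoulli numbers. The crucial point is the sign convention: the appendix fixes $B_1=1/2$, so the correct generating function is $t/(1-e^{-t})=\sum_{j\geq 0}B_j t^j/j!$ (and \emph{not} the more common $t/(e^t-1)$, which corresponds to $B_1=-1/2$). With this convention I would factor
\[
\sum_{k=1}^n e^{kt}=\frac{1}{t}\cdot\frac{t}{1-e^{-t}}\cdot\bigl(e^{nt}-1\bigr)=\frac{1}{t}\left(\sum_{j\geq 0}B_j\frac{t^j}{j!}\right)\left(\sum_{m\geq 1}\frac{n^m t^m}{m!}\right),
\]
using $e^{nt}-1=\sum_{m\geq 1}n^m t^m/m!$.

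Finally I would compare coefficients of $t^p$ on both sides. On the left the coefficient is $\bigl(\sum_{k=1}^n k^p\bigr)/p!$. On the right, the Cauchy product of the two power series (before dividing by $t$) has $t^{p+1}$-coefficient $\sum_{j=0}^{p}B_j n^{p+1-j}/(j!\,(p+1-j)!)$, the constraint $m\geq 1$ forcing $j$ to range only up to $p$; dividing by $t$ shifts this to the $t^p$-coefficient. Equating the two expressions and multiplying through by $p!$, then rewriting $p!/(j!\,(p+1-j)!)=\binom{p+1}{j}/(p+1)$, yields exactly the asserted formula. The only genuine subtlety is the sign convention just mentioned; the sanity check $p=1$, which gives $n(n+1)/2$, confirms that $B_1=1/2$ is precisely the convention under which the stated identity holds. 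Everything else is routine coefficient extraction, so I do not anticipate a real obstacle; an alternative, purely elementary route via the telescoping identity $(k+1)^{p+1}-k^{p+1}=\sum_{i=0}^p\binom{p+1}{i}k^i$ together with the defining recurrence for the $B_j$ would work as well, but the generating-function argument is the cleanest.
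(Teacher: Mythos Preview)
Your proof is correct and complete; the sign convention is handled properly, the geometric-series identity $\sum_{k=1}^n e^{kt}=(e^{nt}-1)/(1-e^{-t})$ is right, and the coefficient extraction goes through exactly as you describe.

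As for comparison: the paper states this proposition in the combinatorial appendix \emph{without proof}, treating Faulhaber's formula as a standard fact. So there is no ``paper's own proof'' to compare against; you have simply supplied what the paper omitted. Your generating-function argument is the canonical one and is entirely appropriate here.
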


We have the following property of Bernoulli numbers.
\begin{proposition}\label{prop:bern}
Let $m,k\in \mathbb{Z}_{\geq 0}$, then
\begin{align*}
\sum_{k=0}^m \binom{m+1}{k}B_k &= m+1,\\
\sum_{k=0}^m (-1)^k\binom{m+1}{k}B_k &= \delta_{m,0},\\
1-\sum_{k=0}^{m-1}\binom{m}{k}\frac{B_k}{m-k+1}&=B_m.
\end{align*}
where $\delta_{m,0}$ is the Kronecker delta.

In particular,
\[
B_m=\frac{1}{m+1}\sum_{j=0}^m \binom{a+1}{j}B_j (m+1-j)(-1)^{j}.
\]
\end{proposition}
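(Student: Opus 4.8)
The plan is to package all four identities as coefficient comparisons for the single generating function
\[
B(x):=\frac{x}{1-e^{-x}}=\sum_{k\geq 0}B_k\frac{x^k}{k!},
\]
which is the series compatible with the normalization $B_1=1/2$ fixed at the start of the appendix (its coefficient of $x$ is indeed $1/2$, whereas the classical $x/(e^x-1)$ would give $-1/2$). Everything then reduces to two elementary algebraic rewritings of $B(x)$ followed by routine reading-off of Taylor coefficients.

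For the first identity I would multiply through by $e^x-1$: since $(e^x-1)B(x)=(e^x-1)\frac{xe^x}{e^x-1}=xe^x$, comparing the coefficient of $x^{m+1}$ in $xe^x=\bigl(\sum_{j\geq 1}x^j/j!\bigr)\bigl(\sum_k B_kx^k/k!\bigr)$ and multiplying by $(m+1)!$ yields $\sum_{k=0}^m\binom{m+1}{k}B_k=(m+1)!/m!=m+1$. For the second identity I would instead multiply by $1-e^{-x}$: from $(1-e^{-x})B(x)=x$ and $1-e^{-x}=\sum_{j\geq 1}(-1)^{j+1}x^j/j!$, the coefficient of $x^{m+1}$ gives $\sum_{k=0}^m(-1)^{m-k}\binom{m+1}{k}B_k=\delta_{m,0}$, and pulling out the global factor $(-1)^m$ turns this into $\sum_{k=0}^m(-1)^k\binom{m+1}{k}B_k=\delta_{m,0}$.

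The remaining two identities are formal consequences of the first two. For the third, I would use $\binom{m}{k}\frac{1}{m-k+1}=\frac{1}{m+1}\binom{m+1}{k}$; the $k=m$ term contributes exactly $B_m$, so $B_m+\sum_{k=0}^{m-1}\binom{m}{k}\frac{B_k}{m-k+1}=\frac{1}{m+1}\sum_{k=0}^m\binom{m+1}{k}B_k=1$ by the first identity, which is the claim after rearrangement. For the last (``in particular'') identity---where $\binom{a+1}{j}$ should read $\binom{m+1}{j}$---I would simplify $\binom{m+1}{j}(m+1-j)=(m+1)\binom{m}{j}$, reducing the right-hand side to $\sum_{j=0}^m(-1)^j\binom{m}{j}B_j$. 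This equals $B_m$ by the reflection formula $e^xB(-x)=B(x)$, immediate from $e^x\cdot\frac{-x}{1-e^x}=\frac{x}{1-e^{-x}}$: extracting the coefficient of $x^m/m!$ from $e^xB(-x)$ produces the binomial transform $\sum_{j=0}^m(-1)^j\binom{m}{j}B_j$, while from $B(x)$ it produces $B_m$.

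I expect no serious obstacle; the only points requiring care are bookkeeping the sign $(-1)^{m-k}$ in the second identity and confirming that the normalization $B_1=1/2$ is the one matching $B(x)=x/(1-e^{-x})$. With that convention fixed, all four statements are one-line coefficient comparisons, and they furnish precisely the combinatorial inputs invoked in Lemma~\ref{lma:temp4} and the surrounding reductions.
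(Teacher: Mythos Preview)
Your proof is correct. The paper itself gives no proof of this proposition: it records the four Bernoulli identities as known facts in the combinatorial appendix and moves on, so there is nothing to compare against. Your generating-function argument with $B(x)=x/(1-e^{-x})$ is the standard one, and your bookkeeping (the sign normalization $B_1=1/2$, the factor $(-1)^m$ in the second identity, the simplifications $\binom{m}{k}/(m-k+1)=\binom{m+1}{k}/(m+1)$ and $\binom{m+1}{j}(m+1-j)=(m+1)\binom{m}{j}$, and the reflection $e^xB(-x)=B(x)$) is all accurate.
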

\section{Category theory}\label{sec:appPic}

Recall that a \emph{Grothendieck universe} is a strongly inaccessible cardinal $\kappa$. We fix one Grothendieck universe and assume that all sets, categories under consideration are $\kappa$-small. For this concept, we refer to \cite{Lur09} Appendix~A.1.

\subsection{Strictification of a commutative functor}
\label{subsec:strict}
Let $\mathbf{C}$, $\mathbf{D}$ be two ($\kappa$-small) groupoids. 

Assume that there is a functor
\[
F:\mathbf{C}^2\rightarrow \mathbf{D}.
\]
Let $\tau$ be the automorphism of $\mathbf{C}^2$ defined by exchanging the two components.

Assume that there is a natural isomorphism
\[
G:F\cn F \circ \tau.
\]

We can keep to notation $G$ everywhere, but we prefer to drop it in the following manner: Consider the set 
\[
A=\{(a,b):a,b\in \pi_0\mathbf{C}\}/\sim,
\]
where $(a,b)\sim (c,d)$ if{f} ($a=d$ and $b=c$) or ($a=c$ and $b=d$) and $\pi_0\mathbf{C}$ is the set of isomorphism classes in $\mathbf{C}$.

We choose a representative $(\alpha,\beta)$ for each class in $A$, where $\alpha,\beta\in \mathbf{C}$. And define $F'(\beta,\alpha)=F'(\alpha,\beta)=F(\alpha,\beta)$. Now for a general pair $(\gamma,\delta)$ of objects in $\mathbf{C}$, there is a unique representative $(\alpha,\beta)$ such that there is an arrow $i:(\gamma,\delta)\rightarrow (\alpha,\beta)$ or $(\gamma,\delta)\rightarrow (\beta,\alpha)$.

In the first case, set 
\[
F'(\gamma,\delta)=F(\gamma,\delta).
\]

In the second case, set
\[
F'(\gamma,\delta)=F(\delta,\gamma).
\]

In any case, we have defined a strictly commutative functor $F'$.
Replacing $F$ by $F'$, we may always consider strictly commutative functors instead of commutative functors with associated symmetric data. (See~\cite{Duc05} for example) Actually, there is no essential difference between these two point of views.

This construction obviously generalizes to more variables. 

In particular, we can safely assume that tensor products in a Picard category is strictly commutative.
In the same way, we may assume that Deligne pairings are strictly commutative.
In any case, there are some obvious compatiblity conditions to verify, but these are always sort of trivial.

\subsection{Picard categories}
\begin{definition}
A \emph{Picard category} is a symmetric monoidal groupoid $(\mathbf{C},\otimes)$ such that 
for each $X\in \mathbf{C}$, $\cdot \mapsto X \otimes \cdot$ is an autoequivalence of $\mathbf{C}$. 
\end{definition}

In this appendix, all Picard categories will be assumed to be strictly commutative and finite direct limits exist. See \cite{Del87} for the precise meaning. 

For the basic concepts concerning monoidal categories, see~\cite{Lur09} Appendix~A.1.3.

Throughout this subsection, $\mathbf{C}$, $\mathbf{D}$ will denote Picard categories.

\begin{definition}
An \emph{additive morphism} $\mathbf{C}\rightarrow \mathbf{D}$ is a monoidal morphism between the underlying monoidal categories. More precisely, it is a
pair: $(F,\mu)$, where
\begin{itemize}
\item $F:\mathbf{C}\rightarrow \mathbf{D}$ is a functor.
\item $\mu_{X,Y}:F(X)\tp F(Y)\rightarrow F(X\tp Y)$ for each $X,Y\in \mathbf{C}$ is an isomorphism functorial in $X,Y$ and is associative for a triple $X,Y,Z$ in the obvious sense.
\end{itemize}

By abuse of language, we always say $F$ is an additive morphism between Picard categories and say $\mu$ is the additive datum of $F$. And we write
\[
F\in L^1(\mathbf{C},\mathbf{D}).
\]
\end{definition}

\begin{definition}\label{def:natiso}
Let $(F,\mu)$, $(F',\mu')$ be two additive morphisms $\mathbf{C}\rightarrow \mathbf{D}$, an \emph{additive natural isomorphism} $A:(F,\mu)\rightarrow (F',\mu')$ is a monoidal natural isomorphism transforming $\mu$ to $\mu'$.
\end{definition}

Warning: When talking about morphisms between general categories, we still have the concept of a natural isomorphism.

We shall use interchangeably the adjectives natural, functorial and canonical.

\begin{definition}
Let $F:\mathbf{C}^n\rightarrow \mathbf{D}$ be a functor. We say $F$ is \emph{multi-additive} if when fixing $n-1$ components, $F$ restricts to a morphism between Picard categories $\mathbf{C}\rightarrow \mathbf{D}$ (with $\mu$ given a priori), and such that the following axiom holds: let $a_1,a_2,b_1,b_2\in \mathbf{C}$, fixing other components, we may assume $n=2$, we require the two natural way of achieving 
\begin{equation}\label{eq:comp}
F(a_1,b_1)\otimes F(a_1,b_2)\otimes F(a_2,b_1) \otimes F(a_2,b_2)\rightarrow F(a_1\otimes a_2,b_1\otimes b_2)
\end{equation}
are equal.
\end{definition}
\begin{definition}
The functor $F$ is \emph{strictly symmetric} if for any $c_i\in \mathbf{C}$, $\sigma\in \mathfrak{S}_n$, we have
\[
F(c_1,\dots,c_n)=F(c_{\sigma 1}, \dots ,c_{\sigma n}).
\]
\end{definition}

Here we have used a strict $=$ in the definition. In \cite{Duc05}, Ducrot used the concept of symmetric datum as follows:
\begin{definition}
A \emph{symmetric datum} for $F$ is a system of natural isomorphisms $\gamma_{\sigma}:F\rightarrow F\circ \sigma$ for any $\sigma\in \mathfrak{S}_n$, the latter $\sigma$ acts on the subindices of the components of $\mathbf{C}$, such that
\[
\gamma_{\psi\circ \sigma}=\gamma_{\psi}\circ \gamma_{\sigma}
\]
for any $\psi,\sigma \in \mathfrak{S}_n$.
\end{definition}
\begin{definition}
A multi-additive functor $F$ together with a symmetric datum $\gamma$ is said to be \emph{symmetric} if $\gamma$ commutes with the additive data.
We write 
\[
F\in L^n(\mathbf{C},\mathbf{D})
\]
for this.
\end{definition}
We do not distinguish strictly symmetric $F$ and symmetric $F$ in this paper thanks to the method of Subsection~\ref{subsec:strict}.

\begin{definition}\label{def:multni}
Let $F,F'\in L^n(\mathbf{C},\mathbf{D})$, a \emph{functorial isomorphism} $A:F\rightarrow F'$ is a natural isomorphism of the underlying functors such that restricting to each component, $A$ becomes a natural isomorphism in the sense of Definition~\ref{def:natiso}.  
\end{definition}

All kinds of natural isomorphisms will be denoted by $\cn$ throughout this paper.
\subsection{Examples}
Given an exact category $\mathbf{T}$, Deligne constructed in \cite{Del87} a Picard category $V\mathbf{T}$ of virtual objects associated to $\mathbf{T}$. We refer to \cite{Eri12} for details.

Given a scheme $X$ or a complex analytic space, we define the following:

1. $\PIC(X)$ is the Picard category of holomorphic line bundles on $X$. 

2. $\Vir(X)$ is the Picard category of virtual vector bundles on $X$.
$\Vect(X)$ is the category of vector bundles on $X$.
So $\Vir(X)=V(\Vect(X))$.

3. $\PIC_{\mathbb{Q}}(X)$ is the Picard category of $\mathbb{Q}$-holomorphic line bundles on $X$. Namely, objects of $\PIC_{\mathbb{Q}}(X)$ are given by a pair $cL:=(L,c)$, where $L$ is a line bundle and $c\in \mathbb{Q}$. A morphism from $cL$ to $c'L'$ is given by an integer $N$, such that $cN,c'N\in \mathbb{Z}$ and an isomorphism $NcL \cn Nc'L'$. 

There is a functor $\PIC(X)\rightarrow \PIC_{\mathbb{Q}}(X)$ sending a line bundle $L$ to $1L$. 

Given a quasi-proejctive complex variety or a complex analytic space, we define furthermore

1. $\PICm(X)$ is the groupoid of smooth Hermitian line bundles on $X$.

2. $\PICm_0(X)$ is the Picard category of \emph{continuous} Hermitian line bundles on $X$.

\begin{remark}
The reason that we use continuous metric is that they are well-behaved under fibre integration. See \cite{Sto66} or Appendix~\ref{sec:appFib}.
\end{remark}

\section{Fibre integration}\label{sec:appFib}
In this part, we shall give a proof of the regularity of fibre integration following \cite{Sto16}.
\subsection{Functoriality}
Let $f:X\rightarrow S$ be a morphism between (connected) complex manifolds of pure relative dimension $n$. 
\begin{remark}
Let $f:X\rightarrow Y$ be a holomorphic map between connected complex manifolds, the following are equivalent

1. $f$ has pure relative dimension.

2. $f$ is open.

3. $f$ is flat. (In the sense of algebraic geometry)

See \cite{Fis06}, P.158 for a proof.
\end{remark}
There are two kinds of fibre integrations:

1. When $f$ is proper, we may define
\[
f_*:\mathcal{D}'^{p,q}(X)\rightarrow \mathcal{D}'^{p-n,q-n}(S).
\]
Let $\varphi \in \mathcal{D}'^{p,q}(X)$,
take $\psi \in \mathcal{D}_{p-n,q-n}(S)$, then $f^*\psi\in \mathcal{D}_{p,q}(X)$ as $f$ is proper, we thus define
\[
\int_{S}f_*\varphi\wedge \psi= \int_{X}\varphi\wedge f^*\psi.
\] 
It is not hard to check that this defines a current.

2. For general $f$, we have
\[
f_*:\mathcal{D}'^{p,q}_c(X)\rightarrow \mathcal{D}_c'^{p-n,q-n}(S),
\]
where the subindex $c$ denotes compactly supported ones. The definition is similar to the previous one.

In both cases, we write
\[
\int_{X/S}\varphi:=f_*\varphi.
\]

Clearly $(fg)_*=f_*g_*$ where $g:S\rightarrow S'$ is a flat proper morphism.

Observe that $\varphi\geq 0$ implies that $f_*\varphi\geq 0$.

Pull-back of current is in general not defined, but for currents with $C^\infty$(resp. $C^0$, $L^p$, $W^{k,p}$) coefficients, pull-back is defined and is in the same class.

However, pull-back to open sets is always defined. In particular, currents form a fine sheaf with respect to the analytic topology.

The following facts follow from definition.
\begin{proposition}
Let $g:S'\rightarrow S$ be a holomorphic map from a complex manifold $S'$ to $S$, let the corresponding Cartesian square be
\[
\begin{tikzcd}
X \arrow[r,"g'"]\arrow[d,"f'"] & X \arrow[d,"f"]\\
S'\arrow[r,"g"] & S
\end{tikzcd}
\]
Then we have the following identity
\begin{equation}
f'_* g'^*=g^*f'_*
\end{equation}
whenever both sides make sense.
\end{proposition}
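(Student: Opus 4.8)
The plan is to read the displayed identity as the base-change formula $f'_*\circ g'^* = g^*\circ f_*$ and to reduce it, by locality and density, to a Fubini computation for smooth forms. First I would pin down the hypotheses hidden in ``whenever both sides make sense'': the current $\varphi$ on $X$ must have coefficients regular enough ($C^\infty$, or at least $C^0$) for $g'^*\varphi$ to be defined, and one must know that $f_*\varphi$ again has continuous coefficients so that $g^*f_*\varphi$ is legitimate. The latter is exactly the regularity theorem of Stoll recalled in this appendix, which I would invoke to guarantee that $f_*\varphi$ has continuous coefficients and hence admits pullback along the arbitrary holomorphic map $g$. Next I would note that the claimed identity is local on both $S$ and $S'$: currents form a fine sheaf for the analytic topology, pullback commutes with restriction to open sets, and fibre integration is compatible with restriction over open subsets of the base. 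Thus I may shrink $S$, $S'$ and $X$ to coordinate polydiscs and control all supports, reducing in particular to the proper case (properness being stable under base change, so $f'$ is proper whenever $f$ is).

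The core reduction is then to the case where $\varphi$ is a \emph{smooth} form. All four operations are continuous for the relevant coefficient topologies — again using Stoll's regularity for the continuity of $f_*$ — and smooth forms are dense, so it suffices to verify the identity on smooth $\varphi$. For smooth $\varphi$ I would pair both sides against a compactly supported test form $\psi$ on $S'$. By the defining duality of $f'_*$ the left-hand side becomes $\int_{X'} g'^*\varphi\wedge f'^*\psi$, while the right-hand side is $\int_{S'} g^*(f_*\varphi)\wedge\psi$; the equality of these two is classical Fubini on the fibre product $X'=X\times_S S'$. To make this precise for an arbitrary $g$, I would factor $g$ through its graph $S'\hookrightarrow S'\times S\to S$, treating separately a smooth projection, where the assertion is pure Fubini, and a closed immersion, where it is a restriction of the integrand.

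The hard part will be the points where $f$ fails to be a submersion. There $X$ carries critical points of $f$, and the fibre integral is controlled only through Stoll's theorem rather than a naive coordinate Fubini. I expect the main obstacle to be checking that this critical locus — a proper analytic subset, hence negligible for integration and of measure zero in each fibre — genuinely obstructs neither the density argument nor the Fubini step. This is precisely why continuous (rather than merely smooth) coefficients are built into the statement, as flagged in the remark preceding the proposition, and I would lean on that regularity throughout to keep the limiting arguments valid across the bad set.
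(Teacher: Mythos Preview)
The paper supplies no proof at all: the proposition is introduced by the sentence ``The following facts follow from definition'' and is left without argument. So there is nothing substantive to compare your plan against on the paper's side.

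Your proposal is essentially correct and far more scrupulous than the paper. You have correctly identified the intended reading $f'_*\circ g'^*=g^*\circ f_*$, correctly flagged that the hidden content of ``whenever both sides make sense'' is precisely that $f_*\varphi$ must have continuous coefficients so that $g^*$ applies (this is Stoll's theorem, proved \emph{after} the proposition in the appendix), and correctly outlined the reduction to a local Fubini computation for smooth $\varphi$. The factorisation of $g$ through its graph is a clean way to organise the smooth case. One minor caution: your density step asserts continuity of $f_*$ from smooth forms to continuous-coefficient currents, which is a mild uniformity statement not explicitly stated in the appendix; in practice one can sidestep this by working directly with smooth $\varphi$ (since that is the only case actually used in the paper's construction of metrics on Deligne pairings), or by noting that the local Stoll argument gives locally uniform estimates. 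What the paper presumably has in mind by ``follows from definition'' is simply the duality unwinding $\langle f'_*g'^*\varphi,\psi\rangle=\int_{X'}g'^*\varphi\wedge f'^*\psi$ together with a tacit appeal to Fubini on the fibre product, glossing over exactly the regularity issues you took care to name.
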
 
\begin{proposition}
Fibre integration commutes with differentiation and interior product. In particular, fibre integration induces a map on the cohomology level.
\end{proposition}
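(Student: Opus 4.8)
The plan is to prove both commutations by pure duality, pushing each identity onto the pullback of \emph{smooth} forms where it is classical. Write $\langle T,\psi\rangle$ for the pairing $\int T\wedge\psi$, so that the definition of $f_*$ reads $\langle f_*\varphi,\psi\rangle=\langle\varphi,f^*\psi\rangle$ for a test form $\psi$ on $S$ (legitimate because $f^*\psi$ is compactly supported: in case~1 by properness of $f$, in case~2 because $\varphi$ is itself compactly supported), and that $d$ on currents is given by the duality formula $\langle dT,\psi\rangle=(-1)^{\deg T+1}\langle T,d\psi\rangle$. The only geometric inputs needed are the classical facts that pullback of smooth forms commutes with the exterior derivative, $f^*d=df^*$, and, for the interior product, that contraction commutes with pullback along $f$-related vector fields, $\iota_{\tilde v}f^*=f^*\iota_v$ whenever $df(\tilde v)=v$.

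First I would treat differentiation. For a current $\varphi$ on $X$ and an arbitrary smooth test form $\psi$ on $S$ of complementary degree I compute $\langle f_*d\varphi,\psi\rangle=\langle d\varphi,f^*\psi\rangle=(-1)^{\deg\varphi+1}\langle\varphi,df^*\psi\rangle=(-1)^{\deg\varphi+1}\langle\varphi,f^*d\psi\rangle=(-1)^{\deg\varphi+1}\langle f_*\varphi,d\psi\rangle$, using in turn the duality definition of $d$ on $\varphi$, then $df^*=f^*d$ on smooth forms, then the definition of $f_*$. Because $f_*$ shifts the total degree by the even number $2n$, one has $\deg f_*\varphi\equiv\deg\varphi\pmod 2$, so this last expression is precisely $\langle df_*\varphi,\psi\rangle$; as $\psi$ is arbitrary, $f_*d=df_*$. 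Decomposing $d=\partial+\bar\partial$ by bidegree, which $f_*$ respects by sending type $(p,q)$ to $(p-n,q-n)$, gives the same identity for $\partial$ and $\bar\partial$ separately. The interior product is handled by the identical three-step computation, with $\iota_{\tilde v}f^*=f^*\iota_v$ in place of $f^*d=df^*$ and the duality definition of $\iota$ in place of that of $d$, yielding $f_*\iota_{\tilde v}=\iota_v f_*$ for any $f$-related lift $\tilde v$ of a vector field $v$ on $S$.

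The cohomological consequence is then immediate. Since $f_*$ is a chain map for $d$ (and for $\partial$, $\bar\partial$), it carries closed currents to closed currents and exact to exact, hence descends to a map on de Rham (respectively Dolbeault) cohomology; on classes represented by smooth forms this recovers the usual integration-along-the-fibre map.

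I expect no deep difficulty here; the work is bookkeeping rather than substance. The parity remark above settles the sign for $d$, and the analogous degree-shift-by-$2n$ parity check settles it for $\iota_v$, so the genuinely substantive ingredient is merely the smooth-form identity $f^*d=df^*$ (and its contraction analogue). The one point that still requires care is the legitimacy of every pairing: at each step the relevant form must be compactly supported, which is exactly what properness of $f$ guarantees in case~1 and what the compact-support hypothesis guarantees in case~2. This is precisely what licenses the duality definitions and ensures that no boundary term is hidden in the manipulation.
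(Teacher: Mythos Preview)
Your proposal is correct and is precisely the ``unfold the duality definitions'' argument that the paper has in mind: the paper states this proposition without proof, prefacing it (together with the neighbouring propositions) only by the remark that these facts ``follow from definition.'' Your duality computation, using $f^*d=df^*$ on smooth test forms and the parity observation that $f_*$ shifts degree by the even number $2n$, is exactly how one makes that remark precise, and the interior-product case is handled the same way via $\iota_{\tilde v}f^*=f^*\iota_v$ for $f$-related fields.
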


\begin{proposition}[Fubini]
Let $A$ and $B$ be complex manifolds, let $\varphi$ be a $L^1$ current of top degree on $A\times B$, then
\[
\int_{A\times B}\varphi=\int_A \int_{A\times B/A} \varphi.
\]
\end{proposition}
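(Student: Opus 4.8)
The plan is to reduce the statement to the classical measure-theoretic Fubini theorem applied to the $L^1$ coefficient of $\varphi$. Both sides of the asserted identity are $\mathbb{C}$-linear in $\varphi$, and fibre integration $\int_{A\times B/A}$ together with the outer integral $\int_A$ are local operators on the base. So I would first cover $A$ by coordinate charts $U_\alpha$ and $B$ by coordinate charts $V_\beta$, and choose a locally finite partition of unity $\{\rho_{\alpha\beta}\}$ with compact supports subordinate to the product cover $\{U_\alpha\times V_\beta\}$ of $A\times B$. Writing $\varphi=\sum_{\alpha,\beta}\rho_{\alpha\beta}\varphi$, each summand is an $L^1$ current of top degree with compact support inside a single product chart. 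To pass the countable sum through both integrals I would split $\varphi$ into its four nonnegative coefficient parts and invoke Tonelli, so that it suffices to establish the identity for a single $\varphi$ compactly supported in a product chart $U\times V$, where the projection is honestly proper on the support.

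On such a chart, identify $U\times V$ with an open subset of $\mathbb{C}^n\times\mathbb{C}^m$ (where $n=\dim A$, $m=\dim B$) and write $\varphi=g(z,w)\,\mu$ with $g\in L^1$ and $\mu$ the Euclidean volume form, which factors as $\mu=\mu_A\wedge\mu_B$. The next and central step is to identify the abstractly defined fibre integral with the honest inner integral: since $f_*\varphi$ is a current of top bidegree $(n,n)$ on the base, I would pair it against an arbitrary compactly supported test function $h$ and use the defining relation to get
\[
\int_A (f_*\varphi)\,h=\int_{A\times B}\varphi\wedge f^*h=\int_{U\times V} g(z,w)\,h(z)\,\mu.
\]
Applying the classical Fubini theorem to the $L^1$ function $g\cdot(h\circ f)$ rewrites the right-hand side as $\int_U h(z)\left(\int_V g(z,w)\,\mu_B\right)\mu_A$. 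As this holds for every test function $h$, I conclude the identity of currents $\int_{A\times B/A}\varphi=\left(\int_V g(z,w)\,\mu_B\right)\mu_A$.

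Finally, integrating this over $A$ and applying the classical Fubini theorem once more gives
\[
\int_A\int_{A\times B/A}\varphi=\int_U\left(\int_V g(z,w)\,\mu_B\right)\mu_A=\int_{U\times V} g\,\mu=\int_{A\times B}\varphi,
\]
which is the assertion. The one genuine point of care, rather than a deep obstacle, is the bookkeeping around noncompactness: the projection $A\times B\to A$ need not be proper, so the abstractly defined $f_*$ only applies to compactly supported currents, and one must justify the reduction to that case and the interchange of the countable sum with the integrals. Both are handled by the compactly supported partition of unity together with Tonelli applied to the nonnegative parts of $g$; no analytic input beyond the classical Fubini theorem is needed.
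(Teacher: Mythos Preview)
Your argument is correct and is the standard way to reduce the statement to the classical Fubini theorem. The paper itself does not give a proof: this proposition is one of several listed under the blanket remark ``The following facts follow from definition,'' so there is nothing to compare your argument against beyond noting that you have supplied exactly the routine verification the paper omits. The one point worth flagging is that your partition-of-unity reduction and the Tonelli argument for passing the sum through the integrals are precisely what is needed to make sense of $f_*\varphi$ when the projection $A\times B\to A$ is not proper, and you have addressed this correctly.
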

\begin{proposition}[Projection formula]
Let $f:X\rightarrow S$ be a proper holomorphic map between complex manifolds of pure relative dimension $n$, let $\varphi\in \mathcal{D}^{p,p}(S)$, let $\psi\in \mathcal{D}'_{p,p}(X)$, then
\[
f_*(\psi(f^*\varphi))= f_*\psi(\varphi).
\]
\end{proposition}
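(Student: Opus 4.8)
The plan is to prove the identity by duality: both sides are currents of the same (top) bidegree on $S$, so it suffices to check that they agree after pairing with an arbitrary test function $\chi\in\mathcal{D}^{0,0}(S)$, and then to transport the whole computation to $X$ through the adjunction that defines the proper push-forward. First I would record the degrees. Writing $\dim_{\mathbb{C}}S=m$ and $\dim_{\mathbb{C}}X=m+n$, the current $\psi$ has dimension $(p,p)$, hence degree $(m+n-p,m+n-p)$; since $f^*\varphi$ is a \emph{smooth} form of degree $(p,p)$, the product $\psi\wedge f^*\varphi$ is defined without any regularity issue (one factor is smooth) and has top degree $(m+n,m+n)$ on $X$, so $f_*(\psi\wedge f^*\varphi)$ is a dimension-$(0,0)$ current on $S$. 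On the other side $f_*\psi$ has degree $(m-p,m-p)$, i.e.\ dimension $(p,p)$, whence $(f_*\psi)\wedge\varphi$ is again a dimension-$(0,0)$ current on $S$. Thus both sides lie in $\mathcal{D}'_{0,0}(S)$ and are tested against functions.

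The computation itself is then short. For $\chi\in\mathcal{D}^{0,0}(S)$ I would apply the defining relation $\langle f_*T,\eta\rangle=\langle T,f^*\eta\rangle$ of the proper push-forward with $T=\psi\wedge f^*\varphi$ and $\eta=\chi$, obtaining
\[
\langle f_*(\psi\wedge f^*\varphi),\chi\rangle=\langle \psi\wedge f^*\varphi,\,f^*\chi\rangle .
\]
Using that multiplication of a current by a smooth form is adjoint to wedging on test forms, together with the multiplicativity $f^*\varphi\wedge f^*\chi=f^*(\varphi\wedge\chi)$, the right-hand side becomes $\langle\psi,\,f^*(\varphi\wedge\chi)\rangle$. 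For the other side, $\varphi\wedge\chi$ is a compactly supported test form of degree $(p,p)$, so the same adjunction applied to $f_*\psi$ gives
\[
\langle (f_*\psi)\wedge\varphi,\chi\rangle=\langle f_*\psi,\,\varphi\wedge\chi\rangle=\langle\psi,\,f^*(\varphi\wedge\chi)\rangle .
\]
Comparing the two displays yields the claim.

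The hard part will not be algebraic but a matter of checking that each manipulation is legitimate, so the degree and support bookkeeping is where I would spend care. Concretely, one must invoke properness of $f$ to know that $f^*\chi$ has compact support $f^{-1}(\operatorname{supp}\chi)$, so that the pairing of the (possibly non-compactly supported) current $\psi\wedge f^*\varphi$ with $f^*\chi$ is defined and the push-forward adjunction applies; likewise $\varphi\wedge\chi$ must genuinely lie in the test space dual to $f_*\psi$. Signs are a non-issue here, since every form in sight has even total degree (all factors are of type $(p,p)$ or $(0,0)$), so no Koszul signs arise when commuting wedge factors. Finally I would note that, as the statement is a purely formal consequence of the definitions of $f_*$ and of the product of a current by a smooth form, no continuity or regularity of $\psi$ beyond being a current is needed.
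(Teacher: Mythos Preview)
Your proof is correct and is exactly what the paper has in mind: the paper does not write out a proof at all, merely remarking that the projection formula (along with the other functoriality properties listed nearby) follows from the definition of $f_*$ by duality. Your argument is the standard unpacking of that remark, and your attention to supports, degrees, and the absence of sign issues is appropriate.
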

A suitable modification holds for non-proper case.
\subsection{Continuity of fibre integrals}
In this part, $\Delta^m$ will denote the open unit polydisk in $\mathbb{C}^m$.

\begin{theorem}[Stoll]\label{thm:Stoll}
Let $f:X\rightarrow S$ be a proper morphism between complex manifolds of relative dimension $n$. Let $\varphi$ be a smooth $(p,q)$-form on $X$, then $f_*\varphi$ has continuous coefficients.
\end{theorem}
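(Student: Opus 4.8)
The plan is to establish the continuity of the coefficients of $f_*\varphi$ by a purely local analysis on the base, reducing $f$ to a model factorisation through a finite branched covering and a trivial projection. Since currents form a fine sheaf and the assertion is local on $S$, I would first fix $s_0\in S$, work in a small polydisk $\Delta^m\subset S$ around $s_0$, and use properness: the set $\supp\varphi\cap f^{-1}(\overline{\Delta^m})$ is compact, so a partition of unity on $X$ lets me write $\varphi$ as a finite sum of smooth forms each compactly supported in a single coordinate chart of $X$ lying over $\Delta^m$. By linearity it suffices to treat one such $\varphi$ of small compact support. In that chart I would invoke the local structure of open holomorphic maps of pure relative dimension $n$ (equivalently, flat maps): after shrinking, $f$ factors as $X \xrightarrow{\rho} \Delta^m\times \Delta^n \xrightarrow{pr} \Delta^m$, where $pr$ is the projection and $\rho$ is a finite branched covering of locally constant, bounded sheet number onto the open polydisk $\Delta^m\times\Delta^n$. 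Correspondingly $f_* = pr_*\circ\rho_*$, so it is enough to analyse the two factors separately.

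For the projection $pr$, integration along the fibres $\{s\}\times\Delta^n$ is the classical fibre integral along a smooth trivial fibration. Here, if the integrand has continuous coefficients and compact support in the fibre direction, then $pr_*$ of it again has continuous coefficients: continuity in $s$ follows from the uniform continuity of a continuous integrand on the relevant compact set together with dominated convergence. Thus the projection step does not worsen regularity, and the entire difficulty is concentrated in the finite map $\rho$.

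For $\rho$, the heart of the matter is to show that the \emph{trace} (sheet-sum) $\rho_*\varphi$ of a smooth form under a branched covering has continuous coefficients. Away from the branch locus $B$, a proper analytic subset of $\Delta^m\times\Delta^n$ and hence thin, $\rho$ is an unramified covering: locally it is a finite disjoint union of graphs of holomorphic sections $\tau_1,\dots,\tau_d$, so $\rho_*\varphi = \sum_k \tau_k^*\varphi$ is manifestly smooth there. The expected \textbf{main obstacle} is to prove that these coefficients extend \emph{continuously} across $B$. The subtlety is genuine: the individual branches $\tau_k$ are only continuous across $B$, and their derivatives — which enter $\tau_k^*\varphi$ through Jacobians — blow up at ramification points; continuity of the trace must therefore come from cancellation among the colliding branches rather than from smoothness of each term. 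I would control this in two stages. First, an a priori $L^\infty$ bound: the coefficients of $\rho_*\varphi$ are fibre integrals, bounded by $\|\varphi\|_{C^0}$ times the mass of the positive integration current over the fibres of $f$, and these fibre volumes are locally uniformly bounded since the $[X_s]$ are positive currents of locally bounded mass (Lelong's theory of volumes of analytic sets). Second, continuity across $B$: approaching $s_0\in B$ along $s_j\to s_0$, the fibre $0$-cycles of $\rho$ vary continuously (they are the roots, with multiplicity, of a Weierstrass polynomial with holomorphic coefficients), and combined with the uniform mass bound this yields convergence of the trace integrals by dominated convergence. The crux, and the step I expect to require the most care, is precisely this continuous variation of the integration currents across the degeneration locus, ensuring no mass is lost in the limit; everything else is either classical (the submersion/projection case) or formal (the reductions and the identity $f_* = pr_*\circ\rho_*$).
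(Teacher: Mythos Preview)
Your factorisation $f = pr\circ\rho$ and the treatment of the projection $pr$ are fine, but the argument breaks down at the finite map $\rho$. The central claim --- that for a finite branched cover $\rho$ the trace $\rho_*\varphi$ of a smooth $(p,q)$-form has continuous coefficients --- is simply \emph{false}. Take $\rho:\mathbb{C}\to\mathbb{C}$, $\rho(z)=z^2$, and $\varphi = \tfrac{i}{2}\,\mathrm{d}z\wedge\mathrm{d}\bar z$. On each branch $\tau_\pm(w)=\pm\sqrt{w}$ one has $\tau_\pm^*\varphi=\tfrac{i}{8|w|}\,\mathrm{d}w\wedge\mathrm{d}\bar w$, and these are \emph{positive}, so they add rather than cancel: $\rho_*\varphi=\tfrac{i}{4|w|}\,\mathrm{d}w\wedge\mathrm{d}\bar w$, which blows up at $w=0$. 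The cancellation you invoke does occur for holomorphic (or antiholomorphic) forms, but not for mixed types. Consequently your $L^\infty$ bound ``$\|\varphi\|_{C^0}$ times the fibre mass'' is wrong --- the Jacobians of the local inverses genuinely diverge --- and the dominated-convergence step that follows has no footing. Since the intermediate object $\rho_*\varphi$ is not continuous, you cannot treat $pr_*$ and $\rho_*$ separately as you propose.

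The paper's proof circumvents exactly this obstacle. After localising to $X=\Delta^n\times\Delta^m$ it uses a fibrewise Cauchy--Schwarz inequality to reduce a general $(n,n)$-form to the single ``pure'' type $g\,\mathrm{d}z_K\wedge\mathrm{d}\bar z_K$, where $K$ is the multi-index of the $\Delta^n$-coordinates; equivalently, $\varphi=g\cdot\pi_2^*\Omega$ for the standard volume form $\Omega$ on $\Delta^n$. For such $\varphi$ the identity $f_*(g\,\pi_2^*\Omega)(b)=\int_{\Delta^n}\bigl(\pi_{2*}g|_{f^{-1}(b)}\bigr)\,\Omega$ reduces everything to the continuity of the \emph{function} trace $\rho_*g$ under the finite map $\rho=(f,\pi_2)$, and for functions the sheet-sum $\sum_{x\in\rho^{-1}(y)}\nu(x)g(x)$ \emph{is} continuous (Stoll--Rouch\'e plus elementary estimates). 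The crucial device you are missing is this Cauchy--Schwarz reduction that strips the differentials off before passing through the branched cover, so that no Jacobians ever appear.
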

We essentially follow the proof in \cite{Sto66} but make some simplifications.
\begin{proof}
We make the following observations:

1. It suffices to prove the theorem for $(n,n)$-forms. For $p<n$ or $q<n$, the result is trivial. Use the fact that interior product commutes with fibre integration, the result follows by induction.

2. The result is clearly local in both $X$ and $S$. So way may assume that $S=\Delta^m$, $X=\Delta^n\times \Delta^m$.

3. It suffices to prove the theorem for $(n,n)$-forms of pure type, namely combinations of $\mathrm{d}z_I\wedge \mathrm{d}\bar{z}_I$. General terms can be controlled using Cauchy-Schwarz inequality.

More precisely, let $g$ be a continuous function on $\Delta^n\times \Delta^m$, then we have
\begin{equation}\label{eq:CS}
|\int_{f^{-1}s} g\mathrm{d}z_I\wedge\mathrm{d}\bar{z}_J|^2\leq \int_{f^{-1}s}|g|\mathrm{d}z_I\wedge\mathrm{d}\bar{z}_I \cdot \int_{f^{-1}s}|g|\mathrm{d}z_J\wedge\mathrm{d}\bar{z}_J.
\end{equation}

Let $K$ be the only index that does not involve the coordinates of $\Delta^m$. Let $\eta$ be 
\[
\eta=\sum_{I\neq K} \mathrm{d}z_I\wedge\mathrm{d}\bar{z}_I
\]

Assume that $f^{-1}(0)$ is given by the vanishing of all coordinates corresponding to $\Delta^m$,
it is easy to see that
\[
\int_{X/S}|g|\eta
\]
is a continuous and is $0$ at the origin. This dominates all other terms in (\ref{eq:CS}) but the one with $I=J=K$, whose continuity follows again from the special case.

The technical assumption that $f^{-1}(0)$ be given by a hyperplane can be achieved outside the singular set of $f^{-1}(0)$, but recall that singular set is always of complex codimension at least $2$ hence of null measure, so they do not actually affect the integrals, we may come over them by the usual approximation tricks.

4. We may actually assume that $n=0$.

In fact, the proof reduces immediately to show the joint continuity of $\pi_{2*} g|_{f^{-1}(b)}(w)$ with respect to $(b,w)$. By taking a product by $\Delta^p$ of everything, we reduce immediately to the case of relative dimension $0$.

5. When $n=0$, the fibre integral is in fact the summation of fibre values counting multiplicity, to see this, we have the function thus defined, say $h$ is continuous on $S$ according to Lemma~\ref{lma:reduction1}. Hence defines a distribution still denoted by $h$. We prove that $h=f_*\varphi$, it suffices to take $\psi\in \mathcal{D}_{0,0}(S)$ and we need to show
\begin{equation}\label{eq:equality}
\int_S h\psi=\int_X \varphi f^*\psi.
\end{equation}
Actually it suffices to prove this result locally, so we may assume that $f:X\rightarrow S$ is a ramified covering, using the continuity of $\varphi$ and $h$, we may assume further that $f:X\rightarrow S$ is a covering without ramification, hence locally $X$ is disjoint copies of $S$, and $f$ maps each copy to $S$, the result becomes trivial now.
\end{proof}

\begin{lemma}\label{lma:reductiona}
Let $f:\Delta^p\times \Delta^q\rightarrow \Delta^p$ be a holomorphic map. Let $g:\Delta^p\times \Delta^q \rightarrow \mathbb{C}$ be a $L^1$ function with compact support(distributional sense). Let $\Omega$ be a smooth volume form on $\Delta^q$. Let $\pi_2:\Delta^p\times \Delta^q\rightarrow \Delta^q$ be the projection onto the second factor. Then for each $b\in \Delta^p$, we have
\begin{equation}
f_*(g\pi_2^*\Omega)(b)=\int_{\Delta^q} \left(\pi_{2*} g|_{f^{-1}(b)}\right)\Omega.
\end{equation}
\end{lemma}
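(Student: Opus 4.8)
The plan is to factor $f$ through the auxiliary holomorphic map
\[
\Phi := (f,\pi_2)\colon \Delta^p\times\Delta^q\longrightarrow \Delta^p\times\Delta^q,\qquad (s,z)\mapsto\bigl(f(s,z),z\bigr),
\]
so that $f=p_1\circ\Phi$ and $\pi_2=p_2\circ\Phi$, where $p_1,p_2$ are the two projections of the target. Source and target both have dimension $p+q$, and $\ker\mathrm{d}\Phi=\ker\mathrm{d}f\cap\ker\mathrm{d}\pi_2$ has the expected dimension $0$ generically; hence $\Phi$ is generically finite, of relative dimension $0$, with its rank-drop locus of measure zero. The whole point is that $\Phi$ is the identity on the second factor, so it turns $f$ into the trivial product projection $p_1$ up to a generically finite cover.

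First I would rewrite the left-hand side by functoriality of fibre integration. Since $\pi_2^*\Omega=\Phi^*p_2^*\Omega$ and $g$ is compactly supported, the (compactly supported form of the) projection formula gives
\[
f_*(g\,\pi_2^*\Omega)=(p_1)_*\Phi_*\bigl(g\cdot\Phi^*p_2^*\Omega\bigr)=(p_1)_*\bigl((\Phi_*g)\,p_2^*\Omega\bigr).
\]
As $p_1$ is a product projection, the Fubini proposition reduces the remaining fibre integral to ordinary integration along $\Delta^q$, so that for each $b$ the right-hand side equals $\int_{\Delta^q}(\Phi_*g)(b,w)\,\Omega(w)$.

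It then remains to identify $\Phi_*g$ with the fibrewise sum in the statement. Because $\Phi$ has relative dimension $0$, fibre integration of the function $g$ is the sum over the finite fibre counting local multiplicities, and the fibre $\Phi^{-1}(b,w)=\{(s,w):f(s,w)=b\}$ is precisely the fibre over $w$ of $\pi_2$ restricted to $f^{-1}(b)$. Matching multiplicities yields $(\Phi_*g)(b,w)=\pi_{2*}\bigl(g|_{f^{-1}(b)}\bigr)(w)$, which is exactly the asserted identity.

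The main obstacle is the low regularity of $g$: being only $L^1$, its restriction $g|_{f^{-1}(b)}$ to the null set $f^{-1}(b)$, the pushforward $\Phi_*g$, and the pointwise identification are a priori legitimate only for almost every $b$. I would circumvent this by first establishing the identity for smooth compactly supported $g$, where every step above is classical, and then passing to the $L^1$-limit: read as an equality of the resulting functions of $b$ (equivalently, tested against $\psi\in\mathcal{D}_{0,0}(\Delta^p)$ and reassembled by Fubini on the total space $\Delta^p\times\Delta^q$), both sides depend continuously on $g$ in the $L^1$-norm, the negligible rank-drop locus of $\Phi$ playing no role.
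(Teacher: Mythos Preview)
Your proposal is correct and follows exactly the strategy the paper indicates: the paper does not write out a proof at all, merely stating ``This is an easy consequence of the projection formula and the Fubini theorem, we leave the proof to readers.'' Your factorisation through $\Phi=(f,\pi_2)$ is precisely the way to make that sentence concrete---the projection formula is applied to $\Phi$, and Fubini to the product projection $p_1$---and your closing remarks on the $L^1$ regularity issue address a genuine subtlety that the paper's one-line sketch glosses over.
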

This is an easy consequence of the projection formula and the Fubini theorem, we leave the proof to readers.

\begin{lemma}\label{lma:reduction1}
Let $f:X\rightarrow S$ be a proper holomorphic map between complex manifolds of pure relative dimension $0$, let $g$ be a continuous function on $X$, then $h(s):=\sum_{a\in f^{-1}(s)} \nu(a) g(a)$ is a continuous function of $s\in S$.
\end{lemma}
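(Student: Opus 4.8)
The plan is to reduce the assertion to a purely local statement near each point of the fibre over a base point and then invoke the stability of the local mapping degree of a finite holomorphic map. First I would fix $s_0\in S$. Since $f$ has pure relative dimension $0$, the fibre $f^{-1}(s_0)$ is a discrete analytic set; as $f$ is proper this fibre is compact, hence finite, say $f^{-1}(s_0)=\{a_1,\dots,a_r\}$. Choose pairwise disjoint coordinate balls $U_i\ni a_i$ with $f^{-1}(s_0)\cap\overline{U_i}=\{a_i\}$. Because $f$ is proper, the image $f\bigl(X\setminus\bigcup_i U_i\bigr)$ is closed in $S$ and avoids $s_0$, so there is an open neighbourhood $V\ni s_0$ with $f^{-1}(V)\subset\bigsqcup_i U_i$. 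This decomposes the function as $h(s)=\sum_{i=1}^r h_i(s)$ with $h_i(s)=\sum_{a\in U_i\cap f^{-1}(s)}\nu(a)\,g(a)$, and it suffices to prove that each $h_i$ is continuous at $s_0$.

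The key input is the \emph{conservation of local multiplicity}. For the finite germ $f\colon(U_i,a_i)\to(S,s_0)$, the local degree $\nu(a_i)$ equals the mapping degree of $f|_{\partial U_i}$ with respect to $s_0$. For $s$ close enough to $s_0$ we have $s\notin f(\partial U_i)$, and the mapping degree of $f|_{\partial U_i}$ with respect to $s$ is exactly $\sum_{a\in U_i\cap f^{-1}(s)}\nu(a)$. By homotopy invariance of the degree, $s$ and $s_0$ lying in the same connected component of the complement of $f(\partial U_i)$, these two degrees coincide. Hence, after shrinking $V$, one has
\[
\sum_{a\in U_i\cap f^{-1}(s)}\nu(a)=\nu(a_i)\qquad\text{for all }s\in V.
\]
I would cite the standard theory of finite holomorphic mappings for this fact.

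Granting this, continuity becomes a straightforward estimate using only the continuity of $g$. Given $\varepsilon>0$, continuity of $g$ at $a_i$ provides a neighbourhood $W_i\subset U_i$ of $a_i$ with $|g(a)-g(a_i)|<\varepsilon$ on $W_i$. Using properness once more, I can shrink $V$ to some $V'\ni s_0$ so that $U_i\cap f^{-1}(s)\subset W_i$ for every $s\in V'$. Then the conservation law gives, for $s\in V'$,
\[
\bigl|h_i(s)-\nu(a_i)g(a_i)\bigr|\le\sum_{a\in U_i\cap f^{-1}(s)}\nu(a)\,|g(a)-g(a_i)|<\varepsilon\,\nu(a_i).
\]
Since $h_i(s_0)=\nu(a_i)g(a_i)$, summing over $i$ yields $|h(s)-h(s_0)|<\varepsilon\sum_i\nu(a_i)$ on $V'$, which proves continuity of $h$ at $s_0$.

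The main obstacle is precisely the conservation of local multiplicity, namely that the local degrees of the preimages inside each $U_i$ sum to $\nu(a_i)$ as the target point varies in $V$; everything else is formal once properness is used to localize and to control the location of the nearby preimages. This is where one genuinely uses that $f$ is holomorphic rather than merely continuous: holomorphy forces the local degrees to be positive and guarantees the topological count underlying the argument.
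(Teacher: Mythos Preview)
Your proof is correct and follows essentially the same route as the paper: localize via properness to disjoint neighbourhoods of the fibre points, invoke the local constancy of the total multiplicity (the paper cites this as the Stoll--Rouch\'e theorem, you phrase it as conservation of the local mapping degree), and then estimate using the continuity of $g$. Your write-up is in fact slightly cleaner in one respect: you use properness directly to obtain $f^{-1}(V)\subset\bigsqcup_i U_i$, whereas the paper's version imposes a support condition on $g$ that is not part of the lemma's hypotheses.
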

Here $\nu(a)$ is the multiplicity of $a$ in the fibre.
\begin{proof}
We reduce to local model as before, let $a_1,\dots,a_r$ be different points in $f^{-1}(0)$. 

Take neighbourhoods $U_i$ of $a_i$ such that 

1. $\bar{U}_i$ are pairwise disjoint

2. $g$ varies by no more than $\epsilon>0$ on each $U_i$.

Let $V$ be a neighbourhood of $0$ in $S$, so that 

1. $f^{-1}(V)\cap \partial U_i=\emptyset$.

2. $f^{-1}(V)\cap (\supp g-\cup_i U_i)=\emptyset$. 

Let $h=f_* g$. For $z\in X$, $s\in S$, define
$\nu(z,s)$ to be $0$ when $f(z)\neq s$, and to be the multiplicity of $f$ at $z$ when $f(z)=s$.

For $w\in V$, we find
\[
|h(w)-h(s)|\leq |\sum_i \sum_{z\in U_i}\nu(z,w)(g(z)-g(a_i))|\leq C\epsilon.
\]
Here we have applied the Stoll-Rouché theorem(\cite{Sto66B}) that the total multiplicity over a point in $S$ is locally constant.
\end{proof}

\begin{theorem}
Let $f$ be a proper smooth morphism of relative dimension $n$ between complex manifolds, let $\varphi$ be a smooth $(p,q)$-form on $X$, then $f_*\varphi$ is a smooth form on $S$.
\end{theorem}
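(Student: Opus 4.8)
The plan is to upgrade the continuity statement of Theorem~\ref{thm:Stoll} to smoothness by exploiting the extra hypothesis that $f$ is smooth, i.e. a holomorphic submersion. The decisive feature that was unavailable in the merely flat case is that a holomorphic submersion can be straightened: by the holomorphic rank theorem, every point of $X$ admits holomorphic coordinates in which $f$ becomes the standard projection. This reduces the whole problem to differentiating an ordinary parametrized integral under the integral sign.

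First I would note that the assertion is local on $S$, so I fix $s_0\in S$, choose a holomorphic coordinate chart $\Delta^m$ on $S$ centered at $s_0$, and work over it. Since $f$ is a submersion, for each $x\in f^{-1}(s_0)$ the rank theorem furnishes holomorphic coordinates on a neighborhood $U_x\cong \Delta^n\times\Delta^m$ in which $f$ is the projection $(z,w)\mapsto w$ onto the fixed $S$-coordinate $w$. By properness the fibre $f^{-1}(s_0)$ is compact, so finitely many such charts $U_1,\dots,U_r$ cover it; shrinking $\Delta^m$ if necessary I may assume $f^{-1}(\Delta^m)\subset\bigcup_j U_j$. Choosing a smooth partition of unity $\{\rho_j\}$ subordinate to this cover, I write $\varphi=\sum_j \rho_j\varphi$, where each $\rho_j\varphi$ is smooth and has support compact in the fibre direction of $U_j$.

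Then, since fibre integration is additive and only finitely many terms contribute over $\Delta^m$, it suffices to treat a single smooth form $\psi$ with support compact in the fibre direction of a chart $\Delta^n\times\Delta^m$ on which $f$ is the projection. In these coordinates
\[
(f_*\psi)(w)=\int_{z\in\Delta^n}\psi(z,w),
\]
where one integrates the component of $\psi$ of fibre type $(n,n)$, contracting the total bidegree by $(n,n)$. Since $\psi$ is smooth with compact support in $z$, the classical theorem on differentiation under the integral sign shows that every derivative of $f_*\psi$ in $w$ and $\bar w$ is again the fibre integral of a smooth compactly supported form, so $f_*\psi$ is smooth on $\Delta^m$. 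Summing over $j$ gives smoothness of $f_*\varphi$ near $s_0$, and $s_0$ was arbitrary.

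The hard part is not the final differentiation, which is routine, but the geometric bookkeeping that lets one align the local straightenings of $f$ with a single coordinate chart on $S$: one must invoke the holomorphic rank theorem with the target coordinates fixed in advance so that all the source charts project to the same $\Delta^m$, and one must arrange the partition of unity so that each piece has support compact in the fibre direction (using properness) in order to justify differentiation under the integral sign. Once this is in place, the statement follows immediately.
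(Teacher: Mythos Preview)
Your argument is correct and follows essentially the same idea as the paper: both proofs exploit the local triviality of a holomorphic submersion to reduce the smoothness of $f_*\varphi$ to differentiation under the integral sign (the paper calls this the ``Leibnitz theorem''). Your version is slightly more self-contained, since you carry out the straightening and partition-of-unity bookkeeping explicitly and thereby avoid the paper's preliminary appeal to Theorem~\ref{thm:Stoll} and the separate identification of the distributional pushforward with the pointwise fibre integral.
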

Here \emph{smooth} means smooth in the algebraic geometric sense, namely, all fibres are smooth and $f$ is flat.
\begin{proof}
We reduce as before to $(n,n)$-forms. By Theorem~\ref{thm:Stoll}, $f_*\varphi$ is a continuous function on $S$.

Define a function $h$ on $S$ by
\[
h(s)=\int_{f^{-1}(s)}\varphi|_{f^{-1}(s)}.
\]
It follows from Leibnitz theorem that $h$ is a smooth function. 

We claim that $h=f_*\varphi$. The proof is similar to the proof of (\ref{eq:equality}) and we leave it to readers. 
\end{proof}
\begin{corollary}
Let $f:X\rightarrow S$ be a proper morphism between complex manifolds of pure relative dimension, let $\varphi$ be a smooth $(p,q)$-form on $X$, then $f_*\varphi$ is smooth on a dense open subset of $S$ and continuous on all $S$.
\end{corollary}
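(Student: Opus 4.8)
The plan is to combine Stoll's continuity theorem (Theorem~\ref{thm:Stoll}) with the smoothness statement for proper \emph{smooth} morphisms proved just above, the bridge being that a proper holomorphic map of pure relative dimension is a submersion away from a nowhere dense analytic subset of the base.

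The continuity assertion on all of $S$ is immediate from Theorem~\ref{thm:Stoll}, applied coefficientwise. It remains to produce a dense open set on which $f_*\varphi$ is smooth. I would first isolate the \emph{critical locus}
\[
C := \{x \in X : df_x \colon T_x X \to T_{f(x)}S \text{ is not surjective}\},
\]
which is an analytic subset of $X$, being cut out locally by the vanishing of the relevant Jacobian minors. Since $f$ is proper, Remmert's proper mapping theorem shows that $f(C)$ is an analytic subset of $S$. On the other hand, Sard's theorem applied to the underlying real-smooth map shows that $f(C)$ has Lebesgue measure zero; as $f$ is open (pure relative dimension, see the remark following the statement), a full analytic subset would have positive measure, so $f(C) \neq S$ and is therefore nowhere dense. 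Set $S_0 := S \setminus f(C)$, a dense open subset.

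Over $S_0$ the map has no critical points, since $x \in C$ forces $f(x) \in f(C)$ and hence $x \notin f^{-1}(S_0)$. Thus $f|_{f^{-1}(S_0)} \colon f^{-1}(S_0) \to S_0$ is a proper holomorphic submersion, i.e. a proper smooth morphism of relative dimension $n$ in the sense of the preceding theorem. Because restriction along the open immersion $S_0 \hookrightarrow S$ commutes with fibre integration (the base-change identity $g^* f_* = f'_* g'^*$ of the functoriality proposition, here for an open immersion $g$), we have $(f_*\varphi)|_{S_0} = (f|_{f^{-1}(S_0)})_*(\varphi|_{f^{-1}(S_0)})$, and the preceding theorem identifies the right-hand side as a smooth form on $S_0$. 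This proves the corollary.

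I expect the only delicate point to be the density of $S_0$, that is, that $f(C)$ is a \emph{proper} subset of $S$: this is precisely where analyticity (from Remmert) and the measure-zero conclusion (from Sard) must be used together --- analyticity alone would not rule out $f(C)=S$, while measure zero alone would not guarantee that the good locus is open. Everything else is a formal concatenation of the two theorems already at hand.
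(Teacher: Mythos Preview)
Your argument is correct and is precisely the intended one: the corollary in the paper is stated without proof because it is meant to follow immediately from the two preceding theorems exactly as you describe, by restricting to the open locus over which $f$ is a submersion. One cosmetic remark: the openness of $f$ is not actually needed to conclude $f(C)\neq S$; Sard alone gives that $f(C)$ has measure zero, and an analytic subset of $S$ equal to all of $S$ would have full measure, which already yields the contradiction.
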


Finally we remark that all these theorems continue to hold without the properness assumption, but we have to concentrate on compactly supported currents. Some of the results generalize to normal complex spaces.

\begin{definition}
Let $X$ be a compact Kähler manifold, let $\varphi\in \mathcal{D}'^{1,1}(X)$, we say $\varphi$ \emph{has $C^\alpha$ primitive} if for some(hence any) smooth representative $\psi$ of the cohomology class of $\varphi$, there is a $C^\alpha$ function $f$ such that
\[
\varphi=\psi+\ddc f.
\]
\end{definition}
The following theorem is essentially due to Tian. (\cite{Tia97})
\begin{corollary}\label{thm:calphapri}
Let $f:X\rightarrow S$ be a proper morphism between closed Kähler manifolds of pure relative dimension $n$. Let $\varphi \in \mathcal{D}^{n+1,n+1}(X)$. Then $f_*\varphi$ has $C^{1,1/2}$ primitive.
\end{corollary}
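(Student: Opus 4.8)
The plan is to combine the continuity of the fibre integral, already established in this appendix, with the $\partial\bar\partial$-lemma and standard elliptic regularity on the compact Kähler base $S$. Write $\eta:=f_*\varphi$; since fibre integration lowers bidegree by $(n,n)$, $\eta$ is a $(1,1)$-current on $S$. In the cases of interest $\varphi$ is closed (being a polynomial in curvature forms), and since fibre integration commutes with $d$, the current $\eta$ is $d$-closed and defines a de Rham cohomology class $[\eta]$; this is precisely the class appearing in the definition of a $C^\alpha$-primitive. By Theorem~\ref{thm:Stoll} and the corollary following it, $\eta$ has continuous coefficients on all of $S$, and is smooth on the dense open locus where $f$ is a submersion.

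First I would fix a Kähler metric $\omega_S$ on $S$ and let $\psi$ be the $\omega_S$-harmonic representative of $[\eta]$, which is smooth. Then $\alpha:=\eta-\psi$ is a continuous, $d$-closed and $d$-exact real $(1,1)$-form. By the $\partial\bar\partial$-lemma on the compact Kähler manifold $S$, applied in the sense of currents, there is a real distribution $u$ on $S$, unique up to an additive constant, with $\ddc u=\alpha$. This $u$ is the candidate primitive, and it remains only to show $u\in C^{1,1/2}(S)$.

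The regularity is extracted by a trace argument. Taking the trace of $\ddc u=\alpha$ with respect to $\omega_S$ yields a scalar Poisson equation $\Delta_{\omega_S}u=\tr_{\omega_S}\alpha$, whose right-hand side lies in $L^\infty(S)$ because $\alpha$ is continuous and $S$ is compact. By the Calder\'on--Zygmund $L^p$-estimates for $\Delta_{\omega_S}$ on $S$ we get $u\in W^{2,p}(S)$ for every $p<\infty$, and Sobolev embedding then gives $u\in C^{1,\beta}(S)$ for every $\beta<1$, in particular $u\in C^{1,1/2}(S)$. The identity $\eta=\psi+\ddc u$ then exhibits the desired $C^{1,1/2}$ primitive.

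The genuine content, and the main obstacle, is not the elliptic bootstrap but the input continuity of $\eta$ across the singular fibres of $f$: away from the submersion locus the integrand $\varphi|_{f^{-1}(s)}$ degenerates, and it is Stoll's theorem (Theorem~\ref{thm:Stoll}) that guarantees $\eta$ nonetheless extends continuously over the discriminant. Once continuity is granted, the exponent $1/2$ is far from sharp, since boundedness of $\alpha$ already forces $C^{1,\beta}$ for all $\beta<1$; we retain $1/2$ only because that is the regularity needed in the sequel. A minor point to check is that the $\partial\bar\partial$-lemma may be invoked for the merely continuous form $\alpha$, which is legitimate because one first solves for $u$ as a current and only afterwards bootstraps its regularity through the scalar Laplace equation.
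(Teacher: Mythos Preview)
Your proposal is correct and follows exactly the paper's approach. The paper's own proof is the one-liner ``This follows from Theorem~\ref{thm:Stoll} and standard elliptic estimates''; you have merely unpacked the latter phrase (the $\partial\bar\partial$-lemma, trace to a scalar Poisson equation, Calder\'on--Zygmund $L^p$ bounds, and Sobolev embedding), and your remark that the statement tacitly requires $f_*\varphi$ to be closed is also apposite.
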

\begin{proof}
This follows from Theorem~\ref{thm:Stoll} and standard elliptic estimates.
\end{proof}

\bibliography{cscK}

\bigskip
\begin{small}
Mingchen~Xia, 
\textsc{Department of Mathematics, \'{E}cole Normale Sup\'{e}rieure, Paris, France 75005}

\par\nopagebreak
  \textit{E-mail address}: \texttt{mcxia@clipper.ens.fr}
\end{small}

\end{document}